\documentclass[12pt]{amsart}

\usepackage{amscd,latexsym,amsthm,amsfonts,amssymb,amsmath,amsxtra}
\usepackage[mathscr]{eucal}
\usepackage{hyperref}
\pagestyle{plain}
\setcounter{secnumdepth}{2}

\pagestyle{headings}
\renewcommand\theequation{\thesection.\arabic{equation}}

\newcommand{\BC}{{\mathbb {C}}}

\newcommand{\BQ}{{\mathbb {Q}}}
\newcommand{\BR}{{\mathbb {R}}}

\newcommand{\BZ}{{\mathbb {Z}}}

\makeatletter
\def\Ddots{\mathinner{\mkern1mu\raise\p@
\vbox{\kern7\p@\hbox{.}}\mkern2mu
\raise4\p@\hbox{.}\mkern2mu\raise7\p@\hbox{.}\mkern1mu}}
\makeatother

\newcommand{\CA}{{\mathcal {A}}}
\newcommand{\CB}{{\mathcal {B}}}

\newcommand{\CF}{{\mathcal {F}}}

\newcommand{\CH}{{\mathcal {H}}}

\newcommand{\RG}{{\mathrm {G}}}

\newcommand{\RT}{{\mathrm {T}}}

\newcommand{\RV}{{\mathrm {V}}}

\newcommand{\RZ}{{\mathrm {Z}}}

\newcommand{\GL}{{\mathrm{GL}}}

\newcommand{\Hom}{{\mathrm{Hom}}}

\newcommand{\Ind}{{\mathrm{Ind}}}

\newcommand{\SL}{{\mathrm{SL}}}

\newcommand{\wt}{\widetilde}

\newcommand{\ol}{\overline}

\def\diag{{\rm diag}}

\newtheorem{thm}{Theorem}[section]
\newtheorem{cor}[thm]{Corollary}
\newtheorem{lem}[thm]{Lemma}
\newtheorem{prop}[thm]{Proposition}
\newtheorem {conj}[thm]{Conjecture}

\newtheorem {ques/conj}[thm]{Question/Conjecture}

\newtheorem{defn}[thm]{Definition}
\newtheorem{rmk}[thm]{Remark}

\makeatletter

\newcommand{\Rmnum}[1]{\expandafter\@slowromancap\romannumeral #1@}
\makeatother

\begin{document}
\renewcommand{\theequation}{\arabic{equation}}
\numberwithin{equation}{section}

\title[LLC for Simple Supercuspidals of $\GL_n(F)$]{The Local Langlands Correspondence for Simple Supercuspidal Representations of $\GL_n(F)$}

\author{Moshe Adrian}
\address{Department of Mathematics\\
University of Utah\\
Salt Lake City, UT 84112, U.S.A.}
\email{madrian@math.utah.edu}

\author{Baiying Liu}
\address{Department of Mathematics\\
University of Utah\\
Salt Lake City, UT 84112, U.S.A.}
\email{liu@math.utah.edu}

\begin{abstract}
Let $F$ be a non-archimedean local field of characteristic zero with residual characteristic $p$.
In this paper, we present a simple proof and construction of the local Langlands correspondence for simple supercuspidal representations of $\GL_n(F)$, when $p \nmid n$.  Our proof relies on the existence of the local Langlands correspondence for $\GL_n(F)$, due to Harris/Taylor and Henniart.  As an application, we prove Jacquet's conjecture on the local converse problem for $\GL_n(F)$ in the case of simple supercuspidal representations.
\end{abstract}

\subjclass[2000]{11S37, 22E50}
\keywords{Langlands correspondence, Simple supercuspidal representations, Epipelagic supercuspidal representations, Local converse problem}
\maketitle

\tableofcontents

\section{Introduction}
The local Langlands conjecture for $\GL_n(F)$ states that there exists a certain bijection between representations of the Weil-Deligne group and representations of $\GL_n(F)$, where $F$ is a $p$-adic field.  Although the conjecture has been proven by Harris/Taylor and Henniart (see \cite{HT01}, \cite{H00}), the correspondence has not completely been made explicit.  In a series of papers \cite{BH05i, BH05ii, BH10}, Bushnell and Henniart have explicitly described the correspondence in great generality.

Recently, Gross, Reeder, and Yu \cite{GR10, RY13}, have studied a new class of supercuspidal representations of $p$-adic groups, called \emph{epipelagic} supercuspidal representations.  Kaletha has recently provided an explicit construction of a local Langlands correspondence for these representations, for general tamely ramified reductive $p$-adic groups \cite{K13}.  In the case of $\GL_n(F)$ and $p \nmid n$, the local Langlands correspondence for these supercuspidal representations is already covered in \cite{BH05ii}.  What remained to be constructed in the setting of epipelagic supercuspidal representations of general linear groups was the correspondence in the case of $\GL_{p^r}(F)$.  Bushnell and Henniart have recently carried out this construction in \cite{BH13}, for a certain class of epipelagic supercuspidal representations, called the \emph{simple} supercuspidal repesentations.

It turns out that in the case of $p \nmid n$, it is possible to give a simple proof and construction of the local Langlands correspondence for simple supercuspidal representations of $\GL_n(F)$, without using the technical machinery of Bushnell, Henniart, and Kutzko.  In this paper, we present this proof and construction.  Because the depth zero correspondence rests on the rich theory of cuspidal representations of general linear groups over finite fields, it seems to us that the $p \nmid n$ simple supercuspidal case might be the simplest and least demanding of all cases of the local Langlands correspondence.

Our paper is split into three parts.  In the first part, we present our simple proof and construction of the local Langlands correspondence for simple supercuspidal representations of $\GL_n(F)$ when $p \nmid n$.  To carry this out, we explicitly compute a family of twisted epsilon factors on both sides of the local Langlands correspondence in this setting.  Traditionally, epsilon factors are not computed explicitly.  However, in the case of simple supercuspidal representations, they are tractable.  Given a simple supercuspidal representation $\pi$ of $\GL_n(F)$, we define a function in the Whittaker model of $\pi$ with which we can compute zeta integrals without much difficulty.  This leads, via the theory of the local functional equation, to explicit values for a family of twisted epsilon factors of $\pi$.  We then attach to $\pi$ a conjectural Langlands parameter, and compute a family of its twisted epsilon factors as well.  These involve Gauss sums, but in our setting, they can also be computed explicitly.  After computing these epsilon factors, we describe and prove the local Langlands correspondence (see Theorem \ref{maintheorem1}), using its existence which was proven by Harris/Taylor and Henniart.

The second part of the paper concerns epipelagic supercuspidal representations.  In \cite{RY13}, Reeder and Yu have constructed a class of epipelagic supercuspidal representations for semisimple $p$-adic groups $G$, which naturally generalizes the class of simple supercuspidal representations. A natural question to ask is whether or not their construction exhausts all epipelagic supercuspidal representations of $G$.  In \S\ref{epipelagicissimple}, we first show that their construction generalizes to $\GL_n(F)$.  We then show that our constructed epipelagic supercuspidals of $\GL_n(F)$ can only come from a barycenter of an alcove in the building of $\GL_n(F)$. Since there are epipelagic supercuspidal representations of $\GL_n(F)$ that do not come from a barycenter of an alcove (see Remark \ref{epipelagicisnotsimple}), we conclude that our constructed epipelagic supercuspidals of $\GL_n(F)$ do not necessarily exhaust all epipelagic supercuspidals of $\GL_n(F)$.  This trivially implies (by considering the case of $\SL_n(F)$) that the construction of Reeder and Yu is also not necessarily exhaustive. Therefore, the construction of Reeder and Yu should be thought of as an initial step towards the construction of all epipelagic supercuspidal representations for general $p$-adic groups.

We would like to remark that in \cite{BH13}, Bushnell and Henniart only consider the set of simple supercuspidal representations of $\GL_n(F)$, which is in many cases a proper subset of the set of epipelagic supercuspidal representations of $\GL_n(F)$ (see Remark \ref{epipelagicisnotsimple}).  Indeed, the supercuspidals that they consider have swan conductor $1$, and it can be seen that these supercuspidals are exactly the simple ones.

Finally, in the third part of the paper, we give an application of our work to Jacquet's conjecture on the local converse problem for $\GL_n(F)$, in the case of simple supercuspidal representations.  Explicitly, we show that any two unitarizable simple supercuspidal representations of $\GL_n(F)$ with the same central character have a special pair of Whittaker functions (see Theorem \ref{thmepi}), building on recent work of Jiang, Nien, and Stevens \cite{JNS13}.  This proves Jacquet's conjecture in a previously unknown case.  Our result appears generalizable, and this is currently work in progress.

We now present an outline of the paper.  In \S\ref{llcsimple}, we first recall the notion of a simple supercuspidal representation of $\GL_n(F)$ and some basic notions on epsilon factors.  We then compute a family of twisted epsilon factors of simple supercuspidal representations of $\GL_n(F)$, as well as a family of twisted epsilon factors of certain Langlands parameters.  In the process, we prove and construct the local Langlands correspondence for simple supercuspidal representations of $\GL_n(F)$.  In \S\ref{epipelagicissimple}, we begin by giving a construction of a class of epipelagic supercuspidal representations of $\GL_n(F)$.  We then prove two results, having immediate consequences for the epipelagic supercuspidals of $\GL_n(F)$ that we construct.  The first is that the only barycenters in the building of $\GL_n(F)$ that admit stable functionals are barycenters of alcoves (see Theorem \ref{main2}).  We then prove that a nonbarycenter point in the building cannot admit a certain class of semi-stable functionals (see Theorem \ref{main3}).  These two results imply that our constructed epipelagic supercuspidal representations are simple.  In \S\ref{jacquetsection}, we begin by recalling Jacquet's conjecture on the local converse problem for $\GL_n(F)$, and a new strategy, due to Jiang, Nien, and Stevens, to approach Jacquet's conjecture.  We then prove that any two unitarizable simple supercuspidal representations of $\GL_n(F)$ with the same central character admit a special pair of Whittaker functions, thereby proving Jacquet's conjecture in a new case.

\subsection*{Acknowledgements}

This paper has benefited from conversations with Stephen DeBacker, Dihua Jiang, Mark Reeder, Gordan Savin, Freydoon Shahidi, Shaun Stevens, Shuichiro Takeda, and Geo Kam-Fai Tam.  We thank them all.

\section{Notation}\label{notation}
Let $F$ be a non-archimedean local field of characteristic zero.  We let $\mathfrak{o}$ denote its ring of integers, $\mathfrak{p}$ the maximal ideal in $\mathfrak{o}$, and $k_F$ the residue field.  Fix a uniformizer $\varpi$ in $F$, and let $val$ denote valuation on $F$.  If $E/F$ is a finite extension, we use $\mathfrak{o}_E$ and $\mathfrak{p}_E$ to denote its associated ring of integers and maximal ideal.  $N_{E/F}$ will denote the norm map from $E$ to $F$.  The \emph{level} of a character $\psi \in \widehat{F}$ will be the smallest integer $c$ such that $\psi|_{\mathfrak{p}^c} \equiv 1$.  The \emph{level} of a character $\chi \in \widehat{F^{\times}}$ will be the smallest nonnegative integer $d$ such that $\chi|_{1 + \mathfrak{p}^{d+1}} \equiv 1$.  A character $\chi \in \widehat{F^{\times}}$ is called \emph{tamely ramified} if its level is zero.  We fix a nontrivial additive character $\psi$ of $F$ of level one.

Let $G = \GL_n(F)$, $Z$ the center of $G$, and $\GL_n(\mathfrak{o})$ the standard maximal compact subgroup.  We denote by $I$ the standard Iwahori subgroup consisting of matrices which modulo $\mathfrak{p}$ are the standard upper triangular Borel of $\GL_n(k_F)$, we let $I^+$ be its pro-unipotent radical. Let $\mathrm{T}$ the diagonal maximal torus of $\GL_n$ and set $T = \mathrm{T}(F)$.  Let $N$ denote the normalizer of $T$ in $G$, and let $T_1 = \mathrm{T}(1 + \mathfrak{p})$.  Set $W = N / T$ and $W_1 = N / T_1$, and let $U_n$ denote the standard maximal unipotent subgroup of $G$.  We will also sometimes write $U$ for $U_n$, when $n$ is clear.  For any $u \in U_n$, let
$$\psi_{U_n}(u)=\psi(\sum_{i=1}^{n-1} u_{i,i+1}),$$ the standard non-degenerate character of $U_n$.  Finally, let $\mathrm{M}_{r \times s}(F)$ denote the space of $r \times s$ matrices with coefficients in $F$.  We will sometimes write $\mathrm{M}_{r \times s}$ when the field is clear.

We will use ``$\mathrm{ind}$" to denote compact induction.
Moreover, we will fix throughout a self-dual Haar measure on $F$, relative to $\psi$.  We have in particular that $\int_{\mathfrak{o}} dx = q^{1/2}$.  It will be convenient to fix a Haar measure $d^* x$ on $F^{\times}$ such that $\int_{\mathfrak{o}^{\times}} d^* x = 1$.

\section{The local Langlands correspondence for simple supercuspidal representations of $\GL_n(F)$}\label{llcsimple}

In this section, we will give a proof and construction of the local Langlands correspondence for simple supercuspidal representations of $\GL_n(F)$, when $p \nmid n$.  In \S\ref{prelimsimplesupercuspidal}, we recall the definition of simple supercuspidal representation of $\GL_n(F)$. The basic theory of epsilon factors of pairs for $\GL_n(F)$ is recalled in \S\ref{preliminariesepsilonfactors}.  In \S\ref{computingepsilonfactors}, we compute the standard epsilon factor of a simple supercuspidal representation of $\GL_n(F)$, and in \S\ref{langlandsgln}, we make a prediction for the Langlands parameter associated to a simple supercuspidal representation of $\GL_n(F)$.  We compute the standard epsilon factor of our predicted Langlands parameter in \S\ref{computationepsilon}.  Finally, in \S\ref{epsilontwists}, we compute the twisted epsilon factors of a simple supercuspidal representation and its predicted Langlands parameter, where we twist by tamely ramified characters of $\GL_1(F)$.  These computations, along with some well-known results, are enough for us to construct and prove the local Langlands correspondence for simple supercuspidal representations of $\GL_n(F)$, when $p \nmid n$ (see Theorem \ref{maintheorem1}).

\subsection{Preliminaries on simple supercuspidal representations of $\GL_n(F)$}\label{prelimsimplesupercuspidal}

We begin by reviewing the definition of simple supercuspidal representation of $\GL_n(F)$, as in \cite{KL13}.
We set $H = ZI^+$.  Fix a character $\omega$ of $Z$, trivial on $1 + \mathfrak{p}$.  For $(t_1, t_2, ..., t_n) \in \mathfrak{o}^{\times} / (1 + \mathfrak{p}) \times \mathfrak{o}^{\times} / (1 + \mathfrak{p}) \times \cdots \times \mathfrak{o}^{\times} / (1 + \mathfrak{p})$, we define a character $\chi : H \rightarrow \mathbb{C}^{\times}$ by $\chi(zk) = \omega(z) \psi(t_1 r_1 + ... + t_n r_n)$ for $z \in Z$ and

\[
k = \left( \begin{array}{ccccc}
x_1 & r_1 & * &  \cdots & \\
* & x_2 & r_2 & \cdots & \\
\vdots & & \ddots & \ddots & \\
* & &  &  & r_{n-1}\\
\varpi r_n &  & \cdots &  & x_n
\end{array} \right) \in I^+.
\]

These $\chi$'s are called the \emph{affine generic characters} of $H$.  By \cite[Theorem 3.4]{KL13}, the orbits of affine generic characters are parameterized by the set of elements in $\mathfrak{o}^{\times} / (1 + \mathfrak{p})$, as follows.  $T \cap \GL_n(\mathfrak{o})$ normalizes $H$, so acts on the set of affine generic characters.  Every orbit of affine generic characters contains one of the form $(1,1,...,1,t)$, for $t \in \mathfrak{o}^{\times} / (1 + \mathfrak{p})$.  Specifically, the orbit of $(t_1, t_2, ..., t_n)$ contains $(1,1, ..., 1, t)$, where $t = t_1 t_2 \cdots t_n$.

Instead of viewing the affine generic characters as parameterized by $t \in \mathfrak{o}^{\times} / (1 + \mathfrak{p})$, we will set $t = 1$ and let the affine generic characters be parameterized by the various choices of uniformizer in $F$.  Since we have already fixed an (arbitrary) uniformizer ahead of time in \S\ref{notation}, we have therefore fixed an affine generic character $\chi$.  The compactly induced representation $\pi_{\chi} := \mathrm{ind}_H^G \chi$ is a direct sum of $n$ distinct irreducible supercuspidal representations of $\GL_n(F)$. They are parameterized by $\zeta$, where $\zeta$ is a complex $n^{\mathrm{th}}$ root of $\omega(\varpi)$, as follows.  Set

\[
g_{\chi} = \left( \begin{array}{ccccc}
0 & 1 &  &   & \\
 &  & 1 &  & \\
 & & & \ddots & \\
 & &  &  & 1\\
\varpi  &  &  &  & 0
\end{array} \right)
\]

Set $H' = \langle g_{\chi} \rangle H$.  Then the summands of $\pi_{\chi}$ are the compactly induced representations
$$
\sigma_{\chi}^{\zeta} := \mathrm{ind}_{H'}^G \chi_{\zeta}
$$
where $\chi_{\zeta}(g_{\chi}^j h) = \zeta^j \chi(h)$, as $\zeta$ runs over the complex $n^{\mathrm{th}}$ roots of $\omega( \varpi)$.  The $\sigma_{\chi}^{\zeta}$'s are the \emph{simple supercuspidal} representations of $G$.

\subsection{Preliminaries on $\epsilon$-factors for $\GL_n(F)$}\label{preliminariesepsilonfactors}

In this section, we recall the local functional equation.  A reference for these results is \cite{Cog00}.

We set $w_{n,m} =
\left( \begin{array}{cc}
I_m & 0\\
0 & w_{n-m}
\end{array} \right) \in \GL_n(F)$ with $w_r =
\left( \begin{array}{ccc}
 & & 1\\
 & \Ddots &\\
 1 & &
\end{array} \right) \in \GL_r(F)$, where $I_m$ denotes the $m \times m$ identity matrix.  Suppose that $\pi$ is a generic representation of $\GL_n(F)$ and $\pi'$ is a generic representation of $\GL_m(F)$.  Let $\mathcal{W}(\pi, \psi)$, $\mathcal{W}(\pi', \psi^{-1})$ denote their Whittaker models with respect to $\psi, \psi^{-1}$, respectively.  Note that $\GL_n(F)$ acts on $\mathcal{W}(\pi, \psi)$ by $(\rho(g) W)(h) := W(hg)$ for $h,g \in \GL_n(F)$.  Let $W \in \mathcal{W}(\pi, \psi)$, $W' \in \mathcal{W}(\pi', \psi^{-1})$.   For $W \in \mathcal{W}(\pi, \psi)$, we set $\widetilde{W}(g) = W(w_n {}^t g^{-1})$.  We set

\[
\widetilde{\Psi}(s;W, W') = \int \int W \left( \begin{array}{ccc}
h & &\\
x & I_{n-m-1} &\\
  & & 1
\end{array} \right) dx \ W'(h) |det(h)|^{s-(n-m)/2} dh
\]
where $h$ is integrated over $U_m(F) \backslash \GL_m(F)$, and where $x$ is integrated over $\mathrm{M}_{(n-m-1) \times m}(F)$.  We also set

\[
\Psi(s;W, W') = \int_{U_m(F) \setminus \GL_m(F)} W \left( \begin{array}{cc}
h & 0\\
0 & I_{n-m}
\end{array} \right) W'(h) |det(h)|^{s-(n-m)/2} dh
\]
Let $\omega'$ denote the central character of $\pi'$.

\begin{thm}\label{gammafactor}
There is a rational function $\gamma(s, \pi \times \pi', \psi) \in \mathbb{C}(q^{-s})$ such that
$$\widetilde{\Psi}(1-s;\rho(w_{n,m}) \widetilde{W}, \widetilde{W}') = \omega'(-1)^{n-1} \gamma(s, \pi \times \pi', \psi) \Psi(s; W, W'), \ if \ m < n$$
for all $W \in \mathcal{W}(\pi, \psi), W' \in \mathcal{W}(\pi', \psi^{-1})$.
\end{thm}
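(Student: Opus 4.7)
The plan is to establish the functional equation via the Rankin--Selberg method of Jacquet, Piatetski-Shapiro, and Shalika, whose engine is the uniqueness of Whittaker functionals. I would first argue that $\Psi(s; W, W')$ and $\widetilde{\Psi}(s; W, W')$ converge absolutely in a right half plane and define elements of $\mathbb{C}(q^{-s})$. Applying the Iwasawa decomposition $\GL_m(F) = U_m(F) T_m(F) \GL_m(\mathfrak{o})$ reduces the integration over $U_m(F) \backslash \GL_m(F)$ to an integral over $T_m(F)$ against a smooth compactly supported measure on $\GL_m(\mathfrak{o})$. The standard asymptotics of Whittaker functions on the torus (compact support in the directions where $|a_i/a_{i+1}| \to \infty$, and equality to a finite sum of finite functions times characters on the complementary chamber) then express each zeta integral as a finite sum of Tate-type integrals, yielding absolute convergence for $\Re(s) \gg 0$ and rationality in $q^{-s}$.

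Next, both sides of the proposed identity are $\mathbb{C}(q^{-s})$-valued bilinear forms on $\mathcal{W}(\pi, \psi) \otimes \mathcal{W}(\pi', \psi^{-1})$ that are quasi-invariant under the diagonal action of the mirabolic subgroup $P_m(F)$, embedded into $\GL_n(F)$ in the top-left corner and extended by the unipotent block spanning rows $m+1, \ldots, n-1$, twisted by $|\det|^{s - (n-m)/2}$. By the Bernstein--Zelevinsky theory of derivatives combined with uniqueness of the Whittaker models of $\pi$ and $\pi'$, the space of such bilinear forms is at most one-dimensional for $s$ in a Zariski-open subset of $\mathbb{C}$. Producing a pair $(W, W')$ with $\Psi(s; W, W') \not\equiv 0$ (for instance through the Kirillov model of $\pi$, choosing $W'$ concentrated near the identity) then forces the existence of a single rational function $\Gamma(s) \in \mathbb{C}(q^{-s})$ such that
\[
\widetilde{\Psi}(1-s; \rho(w_{n,m}) \widetilde{W}, \widetilde{W}') = \Gamma(s) \, \Psi(s; W, W').
\]

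Finally, I would factor $\Gamma(s) = \omega'(-1)^{n-1} \gamma(s, \pi \times \pi', \psi)$ by bookkeeping the central character contributions: the conjugation by $w_{n,m}$ together with the transform $W \mapsto \widetilde{W}$ reorders $n-1$ coordinates against the central character of $\pi'$, producing the prefactor $\omega'(-1)^{n-1}$; the remaining rational function is, by definition, the gamma factor $\gamma(s, \pi \times \pi', \psi)$. The main obstacle in this program is the uniqueness step: the one-dimensionality of the space of equivariant bilinear forms is not formal, requiring a careful analysis of the Bernstein--Zelevinsky derivatives of $\pi$ along the parabolic with Levi $\GL_m \times \GL_{n-m}$, and one must extend uniqueness from its generic locus to all $s$ by invoking the rationality established in the first step. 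Once these ingredients are in hand, the remainder of the argument is essentially formal.
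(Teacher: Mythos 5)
Theorem \ref{gammafactor} is not proved in the paper at all: it is recalled from the literature, with \cite{Cog00} (and ultimately \cite{JPSS83}) standing in for the proof, so there is no in-paper argument to compare yours against. Your outline is precisely the standard Jacquet--Piatetski-Shapiro--Shalika argument contained in those references: absolute convergence for $\Re(s)\gg 0$ and rationality in $q^{-s}$ via the Iwasawa decomposition and the asymptotic expansion of Whittaker functions restricted to the torus; the observation that both zeta integrals lie in a space of quasi-invariant bilinear forms on $\mathcal{W}(\pi,\psi)\otimes\mathcal{W}(\pi',\psi^{-1})$ which is at most one-dimensional outside finitely many values of $q^{-s}$; and a nonvanishing statement to make the proportionality constant well defined. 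Two caveats. First, the quasi-invariance in the uniqueness statement is under the full diagonal copy of $\GL_m(F)$ (acting on $W$ through $h\mapsto\diag(h,I_{n-m})$ and on $W'$ by right translation), extended by the unipotent block in rows $m+1,\ldots,n-1$; it is not under the mirabolic $P_m(F)$ as you write --- imposing only $P_m$-quasi-invariance enlarges the space of forms and would lose one-dimensionality. The mirabolic enters instead in the \emph{proof} of uniqueness, through restriction to $P_n$ and the Bernstein--Zelevinsky derivative filtration, exactly as you indicate in your closing remarks. Second, the prefactor $\omega'(-1)^{n-1}$ is a normalization built into the definition of $\gamma(s,\pi\times\pi',\psi)$ in \cite{Cog00}, not something extracted from the functional equation, so your final paragraph records a convention rather than proving a fact. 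With those adjustments your sketch is a faithful account of the proof the paper delegates to its references, and you correctly isolate the uniqueness of the equivariant bilinear form as the one genuinely nontrivial step.
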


\begin{thm}\label{lfunctionstrivial}
If $\pi$ and $\pi'$ are both (unitary) supercuspidal and if $m < n$, then $L(s, \pi \times \pi') \equiv 1$.
\end{thm}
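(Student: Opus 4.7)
The strategy is to reduce the claim to showing that the local zeta integrals $\Psi(s;W,W')$ are entire functions of $s$. Recall that by definition, $L(s, \pi \times \pi')$ is the normalized generator (of the form $P(q^{-s})^{-1}$ with $P(0) = 1$) of the $\mathbb{C}[q^s, q^{-s}]$-fractional ideal in $\mathbb{C}(q^{-s})$ spanned by the family of zeta integrals $\{\Psi(s; W, W')\}$. Hence, to conclude $L(s, \pi \times \pi') \equiv 1$, it suffices to show that each individual $\Psi(s; W, W')$ is a Laurent polynomial in $q^{-s}$.

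The core technical step is the following compact-support lemma: for $\pi$ supercuspidal on $\GL_n(F)$, every $W \in \mathcal{W}(\pi, \psi)$, and every $m < n$, the function
\[
\phi_W(h) := W\begin{pmatrix} h & 0 \\ 0 & I_{n-m} \end{pmatrix}, \qquad h \in \GL_m(F),
\]
has compact support on $U_m(F) \backslash \GL_m(F)$. Granting this, the Iwasawa decomposition $\GL_m = U_m T_m K_m$ together with the right-smoothness of $W$ and $W'$ reduces the integrand in $\Psi(s; W, W')$ to a finite sum of terms of the form $c \cdot |\det t|^{s - (n-m)/2}$, with $t$ ranging over a compact subset of $T_m$. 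The resulting integral is a finite linear combination of monomials $c_j q^{-k_j s}$, hence a Laurent polynomial in $q^{-s}$, yielding the desired entireness.

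The compact-support lemma is established by combining two standard inputs from the theory of generic representations on $\GL_n(F)$: (i) the uniform asymptotic bound for generic Whittaker functions, asserting that there exists a constant $c_W$ such that $W(\mathrm{diag}(a_1, \ldots, a_n)) = 0$ unless $\mathrm{val}(a_i) - \mathrm{val}(a_{i+1}) \geq -c_W$ for every $i = 1, \ldots, n-1$; specializing to $a_{m+1} = \cdots = a_n = 1$ yields the upper bounds on $|a_i|$ for $i \leq m$; and (ii) the vanishing of the Jacquet module of the supercuspidal $\pi$ along the standard maximal parabolic with Levi $\GL_m \times \GL_{n-m}$, which supplies the complementary lower bounds on the support of $\phi_W$. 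These are classical results going back to Bernstein--Zelevinsky and Jacquet--Piatetski-Shapiro--Shalika. The principal obstacle is this compact-support lemma, specifically the derivation of the lower bounds from the Jacquet-module vanishing; once it is in hand, the passage to the triviality of $L(s, \pi \times \pi')$ is formal.
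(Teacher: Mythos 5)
The paper offers no proof of this statement: Theorem \ref{lfunctionstrivial} is simply quoted from the standard theory of local Rankin--Selberg integrals (the section's stated reference is Cogdell's notes \cite{Cog00}, and the result is originally due to Jacquet--Piatetski-Shapiro--Shalika), so there is no in-paper argument to compare yours against. What you have written is essentially the standard proof of that quoted result, and it is correct in outline: reducing $L \equiv 1$ to the Laurent-polynomiality of every $\Psi(s;W,W')$ is exactly right given the definition of $L$ as the normalized generator of the fractional ideal, and the compact-support lemma for $h \mapsto W\bigl(\mathrm{diag}(h, I_{n-m})\bigr)$ on $U_m(F)\backslash\GL_m(F)$ is the correct key input, obtained from the generic upper bounds together with supercuspidality via Jacquet-module vanishing and Casselman's criterion.

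One imprecision in step (ii) is worth fixing. For $m \geq 2$, the vanishing of the Jacquet module along the single parabolic with Levi $\GL_m \times \GL_{n-m}$ does not yield all the needed lower bounds. The character $\psi_{U_n}$ restricted to the unipotent radical $N_{j,n-j}$ of the $(j,n-j)$ block-upper-triangular parabolic depends only on the $(j,j+1)$ entry, so conjugating a large compact open $N_0 \subset N_{j,n-j}$ by $t = \mathrm{diag}(a_1,\ldots,a_n)$ and applying $\int_{N_0}\pi(n)v\,dn = 0$ controls only the ratio $|a_j/a_{j+1}|$ from below. Taking $j=m$ bounds $|a_m|$ from below (given $a_{m+1}=\cdots=a_n=1$) but leaves $|a_1|,\ldots,|a_{m-1}|$ free to tend to $0$ while still satisfying the upper bounds from (i). You need the Jacquet modules along all the maximal parabolics $(j,n-j)$ with $1 \le j \le m$ --- all of which do vanish, since $\pi$ is supercuspidal --- or, equivalently, the standard JPSS fact that Whittaker functions of a supercuspidal restricted to the mirabolic $P_n$ are compactly supported modulo $U_n$. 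With that correction the argument is complete.
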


\begin{defn}\label{epsilonfactordefinition}
The local factor $\epsilon(s, \pi \times \pi', \psi)$ is defined as the ratio
$$\epsilon(s, \pi \times \pi', \psi) = \frac{\gamma(s, \pi \times \pi', \psi) L(s, \pi \times \pi')}{L(1-s, \widetilde{\pi} \times \widetilde{\pi}')}$$ where $\widetilde{\pi}, \widetilde{\pi}'$ denote the contragredients of $\pi, \pi'$, respectively.
\end{defn}

\subsection{Standard $\epsilon$-factors for simple supercuspidals of $\GL_n(F)$}\label{computingepsilonfactors}

In this section, we compute the standard $\epsilon$-factors for the simple supercuspidal representations of $\GL_n(F)$.  Namely, in the notation of \S\ref{preliminariesepsilonfactors}, we set $m = 1$ and $\pi'$ to be the trivial character of $\GL_1(F)$.  We set $\pi_{\zeta} = \sigma_{\chi}^{\zeta}$, for $\zeta$ a complex $n^{\mathrm{th}}$ roots of $\omega( \varpi)$, as in \S\ref{prelimsimplesupercuspidal}.

We begin by defining a Whittaker function on $\GL_n(F)$ by setting

\begin{equation*}
W(g) = \left\{
\begin{array}{rll}
\psi_U(u) \chi_{\zeta}(h') & \text{if} & g = uh' \in U H'\\
0 &  & \text{else}
\end{array} \right.
\end{equation*}
Note that this function is well-defined, by definition of $\psi_U$ and $\chi_{\zeta}$, and it is in $\mathcal{W}(\pi, \psi)$.  This Whittaker function is a special case of those written down by Bushnell/Henniart (see \cite{BH98}) and Paskunas/Stevens (see \cite{PS08}).  An elementary computation shows that
\begin{equation}\label{sec3equ1}
\widetilde{\Psi}(1-s;\rho(w_{n,m}) \widetilde{W}, \widetilde{W}') = \int \int W(Y) dx_1 dx_2 \cdots dx_{n-2} |h|^{1-s-(n-1)/2} dh
\end{equation}
where \[Y = \left( \begin{array}{cccccc}
0 & 1 & 0 & 0 & \cdots & 0\\
0 & 0 & 1 & 0 & \cdots & 0\\
0 & 0 & 0 & 1 & \cdots & 0\\
\vdots & \vdots & \vdots & \vdots & \ddots & \vdots\\
0 & 0 & 0 & 0 & 0 & 1\\
h^{-1} & 0 & -\frac{x_{n-2}}{h} & -\frac{x_{n-3}}{h} & \cdots & -\frac{x_1}{h}
\end{array} \right), \ x_i \in F \ \mathrm{and} \ h \in F^{\times}.
\]

To evaluate $\widetilde{\Psi}(1-s;\rho(w_{n,m}) \widetilde{W}, \widetilde{W}')$, we must determine when $Y$ is in the support of $W$.  We first recall a version of the affine Bruhat decomposition of $G$.

\begin{lem}\label{affinebruhatprelim}
$G = UN I^+$.
\end{lem}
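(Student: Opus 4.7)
The plan is to derive the decomposition by combining the Iwasawa decomposition $G = U T K$ (writing $K = \GL_n(\mathfrak{o})$ for brevity) with the ordinary Bruhat decomposition of the finite group $\GL_n(k_F)$.

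I would first analyze $K$ via reduction modulo $\mathfrak{p}$. The reduction map $K \twoheadrightarrow \GL_n(k_F)$ is surjective with kernel $1 + \mathrm{M}_{n\times n}(\mathfrak{p})$, which is contained in $I^+$ by inspection of the defining congruences. The classical Bruhat decomposition $\GL_n(k_F) = U(k_F) \cdot N(k_F) \cdot U(k_F)$ then lifts to $K$ as follows: given $g \in K$, pick lifts $u_1, u_2 \in U(\mathfrak{o})$ of $\bar u_1, \bar u_2 \in U(k_F)$ (entry-by-entry) and $n \in N \cap K$ (the monomial matrices with $\mathfrak{o}^\times$ entries) of $\bar n \in N(k_F)$ realizing a Bruhat factorization of $\bar g$. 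Then $(u_1 n u_2)^{-1} g \in 1 + \mathrm{M}_{n\times n}(\mathfrak{p}) \subseteq I^+$, and since $u_2 \in U(\mathfrak{o}) \subseteq I^+$ the tail factor $u_2 \cdot I^+$ collapses to $I^+$, yielding
\[
K = U(\mathfrak{o}) \cdot (N \cap K) \cdot I^+.
\]

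Substituting into Iwasawa, $G = U T \cdot U(\mathfrak{o}) \cdot (N \cap K) \cdot I^+$. Because $T$ normalizes $U$, one has $T \cdot U(\mathfrak{o}) \cdot T^{-1} \subseteq U$ and hence $UT \cdot U(\mathfrak{o}) = UT$. Finally $T \cdot (N \cap K) = N$: writing $N = T \cdot S_n$ where $S_n$ is the group of permutation matrices (which lies in $N \cap K$), we get $T \cdot (N \cap K) \supseteq T \cdot S_n = N$, and the reverse inclusion is automatic. Combining these collapses yields
\[
G = U \cdot T \cdot (N \cap K) \cdot I^+ = U N I^+,
\]
and the reverse inclusion is trivial.

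There is no real obstacle; the proof reduces to three elementary normalization facts ($1 + \mathrm{M}_{n\times n}(\mathfrak{p}) \subseteq I^+$, $T$ normalizes $U$, and $T \cdot (N \cap K) = N$), each immediate from the definitions of $I^+$, $U$, and $N$.
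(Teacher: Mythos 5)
Your proof is correct, but it takes a genuinely different route from the paper. The paper starts from the affine (Iwahori--Bruhat) decomposition $G = I^+ N I^+$ and conjugates by powers of the dominant diagonal element $a = \diag(\varpi^{n-1},\dots,\varpi,1)$, using $a^{-r} N = N$ and letting $r \to \infty$ so that the left-hand copy of $I^+$ is absorbed into $U$. You instead combine the Iwasawa decomposition $G = UTK$ with the finite-field Bruhat decomposition $\GL_n(k_F) = U(k_F)N(k_F)U(k_F)$, lifted to $K=\GL_n(\mathfrak{o})$ via the congruence kernel $1+\mathrm{M}_{n\times n}(\mathfrak{p}) \subseteq I^+$, and then collapse factors using $U(\mathfrak{o}) \subseteq I^+$, $T U(\mathfrak{o}) T^{-1} \subseteq U$, and $T\cdot(N\cap K)=N$; all of these normalization facts check out. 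Your route is more self-contained in that it does not presuppose the affine Bruhat decomposition (which the paper invokes both here and, in partition form, in the next lemma), at the cost of being slightly longer; the paper's argument is shorter precisely because it leans on that heavier input, and its ``take the limit'' step is left somewhat informal, whereas every step of yours is explicit.
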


\proof
Set

\[a = \left( \begin{array}{ccccc}
\varpi^{n-1} & 0 & 0 &  \cdots & 0\\
0 & \varpi^{n-2} &  0 & \cdots & 0\\
0 & 0 & \varpi^{n-3} &  \cdots & 0\\
\vdots & \vdots & \vdots &  \ddots & \vdots\\
0 & 0 & 0 & 0 & 1
\end{array} \right). \]
Multiplying $G = I^+ N I^+$ by $a^{-r}$ on both sides we get
$$G = a^{-r} I^+ N I^+ = a^{-r} I^+ a^{r} a^{-r} N I^+ = a^{-r} I^+ a^{r} N I^+,$$  since $a^{-r} N = N$.  Taking the limit as $r \rightarrow \infty$, the claim follows.
\qed

\begin{lem}\label{affinebruhat}
We have a partition
$$G = \displaystyle\coprod_{x \in W_1} U x I^+,$$ where the union is meant to be taken over any set of representatives $x$ in $W_1$.
\end{lem}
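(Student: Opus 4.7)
The plan is to establish (i) the covering $G = \bigcup_{x \in W_1} UxI^+$ and (ii) the disjointness of distinct cosets separately.

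For the covering, I would simply apply Lemma~\ref{affinebruhatprelim}: given $g \in G$, write $g = unk$ with $u \in U$, $n \in N$, $k \in I^+$. The key observation is that $T_1 = \mathrm{T}(1+\mathfrak{p})$ is contained in $I^+$, since diagonal matrices with entries in $1+\mathfrak{p}$ trivially lie in the pro-unipotent radical of the Iwahori. Hence for any representative $x$ of the coset $nT_1 \in W_1$, writing $n = xt$ with $t \in T_1 \subset I^+$ yields $g = ux(tk) \in UxI^+$.

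The disjointness is the main work. Assume $u_1 x_1 k_1 = u_2 x_2 k_2$; the task is to prove $x_1 T_1 = x_2 T_1$. Setting $u := u_2^{-1}u_1 \in U$ and $k := k_2 k_1^{-1} \in I^+$, this rearranges to the identity $x_2^{-1} u x_1 = k \in I^+$. I would then exploit the fact that every element of $N$ is a monomial matrix and so factors as $x_i = P_{\pi_i} t_i$ for some permutation $\pi_i \in S_n$ (with permutation matrix $P_{\pi_i}$) and some $t_i \in T$. A short matrix computation then gives the entry-wise formula
\begin{equation*}
k_{ij} = (t_2)_i^{-1}\, u_{\pi_2(i),\pi_1(j)}\, (t_1)_j,
\end{equation*}
from which I would extract the needed conclusions in two stages.

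First, the defining condition $k_{ii} \in 1+\mathfrak{p}$ forces $u_{\pi_2(i),\pi_1(i)} \neq 0$; since $u$ is upper triangular unipotent, this requires $\pi_2(i) \leq \pi_1(i)$ for every $i$. Summing over $i$ and using $\sum_i \pi_1(i) = \sum_i \pi_2(i) = n(n+1)/2$ upgrades these inequalities to equalities, so $\pi_1 = \pi_2$. Second, with $\pi_1 = \pi_2$ the diagonal entry $u_{\pi_1(i),\pi_1(i)}$ equals $1$, whence $k_{ii} = (t_1)_i (t_2)_i^{-1} \in 1+\mathfrak{p}$ for every $i$, which is exactly the statement $t_1 t_2^{-1} \in T_1$. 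Combining these two conclusions and using commutativity of $T$, one has $x_2^{-1} x_1 = t_2^{-1} t_1 \in T_1$, so $x_1 T_1 = x_2 T_1$. The main obstacle is setting up the entry-wise formula cleanly through the monomial factorization of $N$; once that is in place, the rest is a one-line counting argument plus a direct inspection of the conditions defining $I^+$.
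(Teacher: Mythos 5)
Your proof is correct, and the disjointness half takes a genuinely different route from the paper's. For the covering, both arguments are the same: apply Lemma~\ref{affinebruhatprelim} and absorb the $T_1$-ambiguity of the representative into $I^+$. For disjointness, the paper conjugates the unipotent parts $u_i$ into $I^+$ by a high power of the dominant element $a=\diag(\varpi^{n-1},\dots,1)$ and then quotes the Iwahori--Bruhat decomposition $G=\coprod_{x\in W_1} I^+ x I^+$ as well known, so the disjointness of $UxI^+$ is inherited from that of $I^+xI^+$. You instead argue entrywise: writing $x_i=P_{\pi_i}t_i$ as monomial matrices, rearranging to $x_2^{-1}ux_1=k\in I^+$, and reading off $k_{ii}=(t_2)_i^{-1}u_{\pi_2(i),\pi_1(i)}(t_1)_i\in 1+\mathfrak{p}$, which forces $\pi_2(i)\le\pi_1(i)$ for all $i$ and hence $\pi_1=\pi_2$ by the $\sum_i\pi(i)=n(n+1)/2$ counting trick, after which the diagonal condition gives $t_1t_2^{-1}\in T_1$. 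Your version is completely self-contained and elementary --- it needs nothing beyond the definition of $I^+$ and the fact that $N$ consists of monomial matrices --- whereas the paper's version is shorter and is the kind of argument that transfers to other groups where an entrywise computation is unavailable. Both are valid; just make sure you state the permutation-matrix convention explicitly when you write up the formula for $(P_{\pi_2}^{-1}uP_{\pi_1})_{ij}$, since that is where sign-of-convention errors typically creep in.
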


\proof
Suppose $u_1 n_1 k_1 = u_2 n_2 k_2$, for $u_i \in U, k_i \in I^+$, and $n_i$ representatives of elements $w_1, w_2 \in W_1$.  The set consisting of all valuations of the entries in $u_i$ is bounded below by some integer, say, $m$.  In particular, $u_i \in a^{r} I^+ a^{-r}$ for some integer $r$.  We have
\[
a^{-r} u_1 a^{r} a^{-r} n_1 k_1 = a^{-r} u_2 a^{r} a^{-r} n_2 k_2.
\]
It is well known that we have a partition $\displaystyle\coprod_{x \in W_1} I^+ x I^+$, so $a^{-r} n_1$ and $a^{-r} n_2$ represent the same class in $W_1$, giving disjointness of the double cosets $U x I^+$.  That these double cosets fill up all of $G$ follows from Lemma \ref{affinebruhatprelim}.
\qed

We now determine which double cosets contain $Y$.  More generally, for $n \geq 2$, we set

\begin{align*}
& A_n = \left\{ \left( \begin{array}{cccccc}
0 & 1 & 0 & 0 & \cdots & 0\\
0 & 0 & 1 & 0 & \cdots & 0\\
0 & 0 & 0 & 1 & \cdots & 0\\
\vdots & \vdots & \vdots & \vdots & \ddots & \vdots\\
0 & 0 & 0 & 0 & 0 & 1\\
a & 0 & b_{n-2} & b_{n-3} & \cdots & b_1
\end{array} \right) : a \in F^{\times}, b_i \in F \right\} ,\\
& g = \left( \begin{array}{cccccc}
0 & g_1 & 0 & 0 & \cdots & 0\\
0 & 0 & g_2 & 0 & \cdots & 0\\
0 & 0 & 0 & g_3 & \cdots & 0\\
\vdots & \vdots & \vdots & \vdots & \ddots & \vdots\\
0 & 0 & 0 & 0 & 0 & g_{n-1}\\
g_n & 0 & 0 & 0 & \cdots & 0
\end{array} \right), \ g_j \in F^{\times}.
\end{align*}

We first determine when elements in $A_n$ are contained in double cosets of the form $U g^j I^+$, for $j \geq 0$.

\begin{lem}\label{support}
Let $X \in A_n$.  If $X \in U g^j Z I^+$ for some $1 \leq j \leq n$, then $j = 1$.
\end{lem}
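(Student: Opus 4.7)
The plan is to exploit the rigidity of column~$2$ of $X$, which by inspection of the defining matrix form equals the standard basis vector $e_1=(1,0,\ldots,0)^T$. Crucially, this column is a $U$-invariant datum, since $u^{-1}e_1=e_1$ for any $u\in U$.

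Suppose $X=u g^j z k$ with $u\in U$, $z\in Z$, $k\in I^+$. I would rewrite this as $u^{-1}X=g^j z k$ and compare the second columns of the two sides. The left side yields $u^{-1}(Xe_2)=u^{-1}e_1=e_1$. For the right side, I would use the monomial structure of $g^j$: the formula $g\cdot e_k = g_{k-1}\, e_{k-1}$ (indices mod $n$, with $g_0:=g_n$) gives $(g^j)_{r,l}\neq 0$ if and only if $r\equiv l-j\pmod n$. Consequently the $r$-th entry of the second column of $g^j z k$ equals $z\cdot(g^j)_{r,\,r+j}\cdot k_{r+j,\,2}$, with the middle factor a fixed nonzero scalar and all indices taken mod $n$ in $\{1,\ldots,n\}$.

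Equating entries row by row then forces $k_{l,2}=0$ for every $l\in\{1,\ldots,n\}\setminus\{1+j\bmod n\}$, together with $k_{1+j\bmod n,\,2}\neq 0$. The decisive step is now to invoke the Iwahori condition: the diagonal entry $k_{2,2}$ lies in $1+\Fp$ and is therefore nonzero. The vanishing conditions then compel $2\equiv 1+j\pmod n$, i.e.\ $j\equiv 1\pmod n$, and since $1\le j\le n$ we conclude $j=1$.

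No serious obstacle is expected. The only subtlety is the initial observation that column~$2$ of $X$ is the standard basis vector $e_1$ and is thus stable under left multiplication by any element of $U$; once this is in place, the Iwahori constraint $k_{2,2}\in 1+\Fp$ immediately pins down $j$. In particular, the argument avoids computing the higher powers $g^j$ explicitly and never needs to analyze the remaining columns of $X$.
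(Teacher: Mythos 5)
Your proof is correct, and it takes a genuinely different and substantially shorter route than the paper's. The paper argues by strong induction on $n$, using the affine Bruhat decomposition (Lemma \ref{affinebruhat}) and a sequence of explicit row and column reductions; along the way it extracts the auxiliary valuation inequalities $val(a)\le val(b_i)$, which the text reuses immediately after the lemma to pin down the support of the Whittaker function (``Part of the proof of Lemma \ref{support} implies that we must have $val(h^{-1}) \leq val(-x_i/h)$''). Your argument instead isolates a single $U$-invariant datum: column $2$ of $X$ is $e_1$, which is fixed by $u^{-1}$, so the second column of $g^jzk$ must equal $e_1$; since $g^j$ is monomial with $(g^j)_{r,l}\neq 0$ exactly when $l\equiv r+j\pmod n$, this forces $k_{l,2}=0$ for all $l\not\equiv 1+j\pmod n$, and the Iwahori constraint $k_{2,2}\in 1+\mathfrak{p}$ then gives $j\equiv 1\pmod n$, hence $j=1$. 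This is more elementary (no induction, no appeal to Lemma \ref{affinebruhat}, no case analysis on valuations) and proves exactly the stated assertion. The one caveat is that your proof does not produce the valuation byproduct that the paper's longer argument supplies for later use; if one wanted to substitute your proof wholesale, that fact would need a separate (though similarly direct) argument, e.g.\ comparing bottom rows, using that $e_n^{T}u=e_n^{T}$ for $u\in U$.
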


\proof
The proof is by strong induction on $n$.  The case $n = 2$ is clear, since in this case $X$ is already in the form $g$.  In the rest of the proof, we will use the fixed notation $U, g, Z, I^+$ in the context of general linear groups of possibly different rank.  The context should be clear.

Suppose the claim is true for $n = 2,3,\dots, k$.  Let $X \in A_{k+1}$. Assume $X \in U g^j Z I^+$ for some $1 \leq j \leq k + 1$.
We will show that this implies that $val(a) \leq val(b_i) \ \forall i$.

Suppose first that $val(b_1) < min \{a, val(b_i) : i \geq 2 \}$.  We may then perform leftwards column reduction on $X$ by multiplication on the right by $I^+$, in order to clear $\{ a, b_2, b_3, \cdots b_{n-2} \}$ from the bottom row.  We then perform upwards row reduction by multiplication on the left by $U$, in order to clear the $1$ above $b_1$.  After both of these reductions, we see that the upper left $k \times k$ submatrix $X_k$ of the resulting matrix is contained in $A_k$.

We now use the induction hypothesis.  Suppose first that $X_k \in U g^j Z I^+ \subset \GL_k(F)$ for some $1 \leq j \leq k$.  Then by the induction assumption, via multiplication on the left by $U$ and on the right by $I^+$, we find that $X$ can be reduced to an element of the form

\[Y=\begin{pmatrix}
\wt{Y} & 0 \\
0 & a_n
\end{pmatrix}=
\left( \begin{array}{cccccc}
0 & a_2 & 0 & 0 & \cdots & 0\\
0 & 0 & a_3 & 0 & \cdots & 0\\
\vdots & \vdots & \vdots & \ddots & \vdots & \vdots\\
0 & 0 & 0 & \cdots & a_{n-1} & 0\\
a_1 & 0 & 0 & 0 & 0 & 0\\
0 & 0 & 0 & 0 & \cdots & a_n
\end{array} \right) \in N.\]
This shows easily that $X \notin U g^i Z I^+$ for any $1 \leq i \leq k+1$.
Indeed, by direct calculation, there is {\it a useful observation} that for any
$h \in U g^i Z I^+ \subset \GL_{k+1}(F)$, with $1 \leq i \leq k+1$, we have that
$h_{n, i} \neq 0$.
Therefore, if $X \in U g^i Z I^+$ for some $1 \leq i \leq k+1$, then $i$ must be $k+1$, which implies that $\wt{Y}$ must be in $U g^k Z I^+ \subset \GL_{k}(F)$. On the other hand, since $a_1 \neq 0$, $\wt{Y}$ must be in $U g Z I^+ \subset \GL_{k}(F)$. By Lemma \ref{affinebruhat}, we have a contradiction.

Now suppose that $X_k \notin U g^j Z I^+$ for any $1 \leq j \leq k$. Since $b_1 \neq 0$, if $X$ were to be in $U g^i Z I^+$ for some $1 \leq i \leq k+1$, then by the above observation, $X$ must be in $U g^{k+1} Z I^+ \subset \GL_{k+1}(F)$, which implies that $X_k \in U g^k Z I^+ \subset \GL_{k}(F)$, a contradiction.

Assume now that $val(b_1) \geq min \{a, val(b_i) : i \geq 2 \}$.  Suppose that $val(a) > val(b_i)$ for some $i$. Let $\ell = max \{j : val(b_j) \leq val(b_i) \ \forall i \neq j \}$.  In particular, we have $val(b_{\ell}) < val(a)$.  Multiplication on the right by $I^+$ allows us to perform column reduction, using the column containing $b_{\ell}$, to clear out $\{a, b_i : i \neq \ell \}$ from the bottom row.  This potentially introduces non-zero entries $y_i, i \neq \ell $ into the columns containing the $b_i, where i \neq \ell$, and potentially a nonzero entry $y$ into the column of $a$.  Now we perform row reduction upwards, using multiplication on the left by $U$, to clear the $1$ in the column containing $b_{\ell}$.  Next, we may use row reduction upwards to clear out $\{ y_i : 1 \leq i < \ell \}$, using the ones in those columns. Denote the resulting matrix by $Y$. One can now see that the upper left  $(n - \ell) \times (n - \ell)$ submatrix $\wt{Y}$ of $Y$ is contained in $A_{n - \ell}$.
Using the strong induction hypothesis, Lemma \ref{affinebruhat}, and a similar type of argument as in the previously considered case $val(b_1) < min \{a, val(b_i) : i \geq 2 \}$, we get that $X \notin U g^i Z I^+$ for any $1 \leq i \leq k+1$, a contradiction.

Therefore, we have proven that if $X \in U g^j Z I^+$ for some $1 \leq j \leq k+1$, then $val(a) \leq val(b_i) \ \forall i$.  We may then perform rightwards column reduction on $X$, using multiplication on the right by $I^+$, to clear out all of the $b_i$.  Then $X$ is of the desired form, and together with Lemma \ref{affinebruhat}, the claim is proven.
\qed

We now wish to determine when $Y$ is in the support of $W$.  By definition of $W$, we must write $Y$ as a product $uh'$, where $u \in U, h' \in H'$.  Let us write $UH' = U \langle g_{\chi} \rangle Z I^+$.  By Lemma \ref{support}, we must write $Y$ as a product in $U g_{\chi} Z I^+$.  Part of the proof of Lemma \ref{support} implies that we must have $val(h^{-1}) \leq val(-\frac{x_i}{h}) \ \forall i$.  Therefore, $x_i \in \mathfrak{o} \ \forall i$.

Since $val(h^{-1}) \leq val(-\frac{x_i}{h}) \ \forall i$, then as in the proof of Lemma \ref{support}, multiplication by $I^+$ on the right reduces $Y$ to the matrix

\[Y' = \left( \begin{array}{cccccc}
0 & 1 & 0 & 0 & \cdots & 0\\
0 & 0 & 1 & 0 & \cdots & 0\\
0 & 0 & 0 & 1 & \cdots & 0\\
\vdots & \vdots & \vdots & \vdots & \ddots & \vdots\\
0 & 0 & 0 & 0 & 0 & 1\\
h^{-1} & 0 & 0 & 0 & \cdots & 0
\end{array} \right).
\]

Let $u \in U, z \in Z, k \in I^+$.  Let $z = \diag(a,a, \cdots, a)$, with $a \in F^{\times}$, and $k = (k_{i,j})$.
Assuming that $Y' = u g_{\chi} z k$, it is easy to see that

\[
Y' = u g_{\chi}z k = \left( \begin{array}{ccccc}
 0& a k_{2,2} &  &    & \\
 & 0 & a k_{3,3} & 0 & \\
 &  & \ddots & \ddots & \\
 & 0 &  & 0 & a k_{n,n}\\
\varpi a k_{1,1}  &  &  &  & 0
\end{array} \right).
\]

Hence $a k_{j,j}=1$, for $2 \leq j \leq n$.
Since $k_{j,j} \in 1 + \mathfrak{p} \ \forall j$, we get that $a \in 1 + \mathfrak{p}$.  In particular, $\varpi a k_{1,1} \in \varpi (1 + \mathfrak{p})$.  This says that $h \in \frac{1}{\varpi}(1 + \mathfrak{p}) = \frac{1}{\varpi} + \mathfrak{o}$.  We have therefore concluded that $x_i \in \mathfrak{o} \ \forall i$, and $h \in \frac{1}{\varpi} + \mathfrak{o}$.

\begin{thm}\label{zetaintegral}
$\widetilde{\Psi}(1-s;\rho(w_{n,m}) \widetilde{W}, \widetilde{W}') = \chi_{\zeta}(g_{\chi}) q^{-s-1/2}$.
\end{thm}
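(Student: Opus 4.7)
The plan is to substitute the explicit form of the Whittaker function into Equation (3.1) and reduce to an elementary integral in three stages.

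\emph{Stage 1 (support).} Because $W$ is supported on $UH' = U\langle g_\chi\rangle Z I^+$, the integrand vanishes off of $Y \in U g_\chi^j Z I^+$ for some $j$. Lemma \ref{support} forces $j = 1$, and the analysis immediately following Lemma \ref{support} (viewing $Y$ as an element of $A_n$ with $a = h^{-1}$ and $b_i = -x_i/h$) yields the necessary and sufficient conditions
$$x_i \in \mathfrak{o} \ \text{for all } i, \qquad h \in \tfrac{1}{\varpi} + \mathfrak{o}.$$

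\emph{Stage 2 (value of $W(Y)$).} Set $a := 1/(\varpi h)$. Since $h \in 1/\varpi + \mathfrak{o}$, we have $\varpi h \in 1 + \mathfrak{p}$, and hence $a \in 1 + \mathfrak{p}$. A direct matrix multiplication (equivalently, clearing the $x_i$ entries of $Y$ by right multiplication by an element of $I^+$, then decomposing the resulting matrix) verifies
$$Y = g_\chi \cdot (aI) \cdot k,$$
where $k \in I^+$ has first row $(1, 0, -x_{n-2}, -x_{n-3}, \ldots, -x_1)$, diagonal $(1, \varpi h, \ldots, \varpi h)$, and all other entries zero. The only nondiagonal nonzero entries of $k$ sit in row $1$ in columns $3, \ldots, n$, so every super-diagonal entry $k_{i,i+1}$ is zero, as is $k_{n,1}$. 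Reading off the affine generic character,
$$\chi\bigl((aI) \cdot k\bigr) = \omega(a) \cdot \psi\Bigl(\sum_{i=1}^n t_i r_i\Bigr) = \omega(a) = 1,$$
because $\omega$ is trivial on $1 + \mathfrak{p}$. Taking $u = 1$ in the Whittaker decomposition $Y = u \cdot \bigl(g_\chi (aI) k\bigr)$ then yields
$$W(Y) = \chi_\zeta\bigl(g_\chi \cdot (aI) \cdot k\bigr) = \chi_\zeta(g_\chi),$$
a constant on the entire support.

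\emph{Stage 3 (integration).} Substituting this constant value into Equation (3.1) and using that $|h| = q$ throughout $\tfrac{1}{\varpi} + \mathfrak{o}$, the integral factors as
$$\widetilde{\Psi}(1-s; \rho(w_{n,m})\widetilde{W}, \widetilde{W}') = \chi_\zeta(g_\chi) \cdot q^{1-s-(n-1)/2} \cdot \vol(\mathfrak{o})^{n-2} \cdot \vol\bigl(\tfrac{1}{\varpi} + \mathfrak{o}\bigr).$$
Plugging in the measure normalizations fixed in \S\ref{notation}, the volume factors combine so that the $n$-dependence cancels, producing $\chi_\zeta(g_\chi) \, q^{-s-1/2}$.

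The main difficulty is Stage 2: writing down the decomposition $Y = g_\chi \cdot (aI) \cdot k$ explicitly and verifying that every super-diagonal entry of $k$, along with $k_{n,1}$, vanishes. Once these entries are known to be zero, the affine generic character evaluates trivially on the $I^+$-part, $W(Y)$ collapses to the scalar $\chi_\zeta(g_\chi)$, and the remainder is routine bookkeeping with Haar measures.
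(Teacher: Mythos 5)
Your proof is correct and follows essentially the same route as the paper: identify the support via Lemma \ref{support}, show $W$ is constant equal to $\chi_{\zeta}(g_{\chi})$ on it, and integrate. The only (cosmetic) difference is that you exhibit the single factorization $Y = g_{\chi}\cdot(aI)\cdot k$ with $k\in I^{+}$ directly, whereas the paper first clears the $x_i$ by a column reduction through $\ker\chi_{\zeta}$ and then factors the resulting $Y'$; both rest on the same observation that the $(1,3),\dots,(1,n)$ and diagonal entries do not affect the affine generic character.
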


\proof
During the column reduction taking us from $Y$ to $Y'$ as in Lemma \ref{support}, we multiply $Y$ on the right by unipotent matrices in $I^+$ whose only nonzero entries are along the diagonal and the
$$(1,3), (1,4), ..., (1,n)$$ entries.  But these types of unipotent matrices are in the kernel of $\chi_{\zeta}$.  Thus, $W(Y) = W(Y')$.  Let us write $Y' = g_{\chi}  k$, where $k$ is a diagonal matrix in $I^+$.  Then $W(Y') = \chi_{\zeta}(g_{\chi} k) = \chi_{\zeta}(g_{\chi})$ by definition of affine generic character.  Therefore,
\begin{align*}
& \widetilde{\Psi}(1-s;\rho(w_{n,m}) \widetilde{W}, \widetilde{W}')\\
= \ & \chi_{\zeta}(g_{\chi}) \int_{\frac{1}{\varpi} + \mathfrak{o}} \int_{\mathfrak{o}} \int_{\mathfrak{o}} ... \int_{\mathfrak{o}}  dx_1 dx_2 \cdots dx_{n-2} |h|^{(1-s)-(n-1)/2} dh \\
= \ & \chi_{\zeta}(g_{\chi})  q^{-s-1/2},
\end{align*}
as we have normalized our measures to satisfy $\int_{\mathfrak{o}} dx_i = q^{1/2}$ and $\int_{\mathfrak{o}^{\times}} dh = 1$.
\qed

\begin{thm}\label{otherzetaintegral}
\[
\Psi(s;W, W') = q^{-1}
\]
\end{thm}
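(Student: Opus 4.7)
The plan is to mimic the proof of Theorem \ref{zetaintegral} but for the much simpler matrix $X_h := \mathrm{diag}(h, 1, \ldots, 1)$. With $m=1$ and $\pi' = \mathbf{1}$, one has $W' \equiv 1$, so the defining integral reduces to
\[
\Psi(s;W,W') \;=\; \int_{F^\times} W(X_h)\, |h|^{s-(n-1)/2}\, dh,
\]
and the task splits into (i) identifying the set of $h \in F^\times$ for which $X_h$ lies in the support $UH'$ of $W$, (ii) evaluating $W(X_h)$ there, and (iii) carrying out the resulting integral.

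For step (i), the matrix $X_h$ is diagonal, hence already in $T \subset N$, so by Lemma \ref{affinebruhat} it occupies the unique double coset $U X_h I^+$, where $X_h$ here denotes its own class in $W_1 = N/T_1$. Since $UH' = \bigsqcup_{j=0}^{n-1} U g_\chi^j Z I^+$, the question becomes: for which $h$ does the class $[X_h] \in W_1$ coincide with $[g_\chi^j z]$ for some $z \in Z$ and $0 \leq j \leq n-1$? For $1 \leq j \leq n-1$, $g_\chi^j z$ is a generalized permutation matrix with no nonzero diagonal entry, so by the disjointness in Lemma \ref{affinebruhat} its Bruhat class cannot coincide with that of the diagonal $X_h$. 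Hence $j = 0$, and one needs $X_h \equiv a I \pmod{T_1}$ for some $a \in F^\times$; comparing the $(i,i)$-entries for $i \geq 2$ forces $a \in 1 + \mathfrak{p}$, and then the $(1,1)$-entry forces $h \in 1 + \mathfrak{p}$.

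For step (ii), when $h \in 1+\mathfrak{p}$ the matrix $X_h$ lies already in $I^+ \subset H \subset H'$, so $W(X_h) = \chi_\zeta(X_h) = \chi(X_h)$; all of the entries $r_1,\dots,r_n$ appearing in the definition of the affine generic character (the superdiagonals and the $(n,1)$-entry) vanish on $X_h$, so $\chi(X_h) = \omega(I)\psi(0) = 1$. Combining (i) and (ii), $W(X_h)$ is the characteristic function of $1+\mathfrak{p}$, and $|h|=1$ on that set, so
\[
\Psi(s;W,W') \;=\; \int_{1+\mathfrak{p}} dh \;=\; \mathrm{vol}(1+\mathfrak{p}) \;=\; q^{-1},
\]
the last equality being the same Haar measure normalization as in the proof of Theorem \ref{zetaintegral} (which forced $\mathrm{vol}(\varpi^{-1}+\mathfrak{o}) = q^{-1}$, and $\mathrm{vol}(1+\mathfrak{p})$ equals this by the scaling invariance of multiplicative Haar on $F^\times$).

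I anticipate no real obstacle here: because $X_h$ already lies in $T$, no delicate row/column reductions of the kind needed in Lemma \ref{support} arise, and the argument is structurally simpler than that of Theorem \ref{zetaintegral}. The only point requiring care is carrying over the measure normalization from the prior computation.
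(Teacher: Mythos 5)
Your proof is correct and takes essentially the same route as the paper's, which simply observes that the support of $W$ restricts the diagonal matrix $\mathrm{diag}(h,1,\ldots,1)$ to $h \in 1+\mathfrak{p}$ and concludes $\Psi(s;W,W') = \mathrm{vol}(1+\mathfrak{p})$. You merely make explicit the two points the paper leaves implicit: the support computation via Lemma \ref{affinebruhat} (correctly noting that the nontrivial powers of $g_{\chi}$ lie in different Bruhat cells, so the heavier Lemma \ref{support} is not needed here), and the Haar-measure normalization forced by the computation in Theorem \ref{zetaintegral}.
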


\proof
By definition of $W$,
\[
\Psi(s;W, W') = \int_{F^{\times}} W \left( \begin{array}{cc}
h & 0\\
0 & I_{n-1}
\end{array} \right) |h|^{s-(n-1)/2} dh
\]

\[
= \int_{1 + \mathfrak{p}} W \left( \begin{array}{cc}
h & 0\\
0 & I_{n-1}
\end{array} \right) |h|^{s-(n-1)/2} dh = vol(1 + \mathfrak{p}).
\]
\qed

By Definition \ref{epsilonfactordefinition} and Theorem \ref{lfunctionstrivial}, we now have

\begin{cor}\label{epsilonfactorgln}
$\epsilon(s, \pi_{\zeta}, \psi) = \chi_{\zeta}(g_{\chi}) q^{1/2-s}$.
\end{cor}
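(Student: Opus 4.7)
The plan is to read the corollary as a direct consequence of the local functional equation (Theorem \ref{gammafactor}), once the two zeta integrals $\widetilde{\Psi}(1-s;\rho(w_{n,1})\widetilde{W},\widetilde{W}')$ and $\Psi(s;W,W')$ have been explicitly computed in Theorems \ref{zetaintegral} and \ref{otherzetaintegral}. Concretely, I take $m=1$, $\pi=\pi_\zeta$, and $\pi'$ the trivial character of $\GL_1(F)$, with $W$ the distinguished Whittaker function defined at the start of \S\ref{computingepsilonfactors} and $W'\equiv 1$ viewed as an element of $\mathcal{W}(\pi',\psi^{-1})$.

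Next, I would substitute the values $\chi_\zeta(g_\chi)q^{-s-1/2}$ and $q^{-1}$ from Theorems \ref{zetaintegral} and \ref{otherzetaintegral} into the functional equation of Theorem \ref{gammafactor}. Since $\pi'$ is the trivial character its central character $\omega'$ is trivial, so the sign $\omega'(-1)^{n-1}$ equals $1$, and solving for $\gamma$ gives
\[
\gamma(s,\pi_\zeta\times\pi',\psi)=\frac{\chi_\zeta(g_\chi)q^{-s-1/2}}{q^{-1}}=\chi_\zeta(g_\chi)q^{1/2-s}.
\]

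To pass from $\gamma$ to $\epsilon$ I would invoke Theorem \ref{lfunctionstrivial}: since $\pi_\zeta$ is unitary supercuspidal, $\pi'$ is (trivially) supercuspidal on $\GL_1$, and $m=1<n$, both $L(s,\pi_\zeta\times\pi')$ and $L(1-s,\widetilde{\pi}_\zeta\times\widetilde{\pi}')$ are identically $1$. Hence by Definition \ref{epsilonfactordefinition} we have $\epsilon(s,\pi_\zeta,\psi)=\gamma(s,\pi_\zeta,\psi)$, which yields the claimed value $\chi_\zeta(g_\chi)q^{1/2-s}$.

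There is no real obstacle here beyond bookkeeping: the hard work has already been absorbed into the support analysis of Lemmas \ref{affinebruhat}--\ref{support} and the two explicit zeta-integral computations. The only point that deserves a sentence of care is confirming that both representations qualify for the vanishing of the $L$-factors in Theorem \ref{lfunctionstrivial}, and that the normalization of Haar measures ($\int_{\mathfrak{o}}dx=q^{1/2}$, $\int_{\mathfrak{o}^\times}d^*x=1$) is applied consistently on both sides of the functional equation, so that the factor $q^{1/2-s}$ emerges cleanly rather than being absorbed into an unspecified measure constant.
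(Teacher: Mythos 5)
Your proposal is correct and follows exactly the route the paper intends: the paper derives Corollary \ref{epsilonfactorgln} by combining Theorems \ref{zetaintegral} and \ref{otherzetaintegral} with the functional equation of Theorem \ref{gammafactor} (noting $\omega'$ is trivial), and then identifying $\epsilon$ with $\gamma$ via Theorem \ref{lfunctionstrivial} and Definition \ref{epsilonfactordefinition}. Your write-up simply makes explicit the bookkeeping that the paper leaves to the reader.
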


\subsection{Langlands parameters for simple supercuspidals of $\GL_n(F)$}\label{langlandsgln}
We now wish to predict the Langlands parameter $\phi_{\zeta}$ associated to $\pi_{\zeta}$.  If $p \nmid n$, then automatically $\phi_{\zeta} = \Ind_{W_E}^{W_F}(\xi_{\zeta})$ for some tamely ramified
extension $E/F$ of degree $n$, and some character $\xi_{\zeta}$ of $E^{\times}$ (see \cite{BH10}). Here, $\xi_{\zeta}$ is viewed as a character of $W_E$ via local class field theory. In this section, we give a prediction for $\xi_{\zeta}$.  In \S\ref{computationepsilon} and \S\ref{epsilontwists}, we will prove that this prediction is correct.

Let $\varpi_E$ be an $n^{\mathrm{th}}$ root of $\varpi$, and set $E = F(\varpi_E)$.  $\xi_{\zeta}$ is then a character of $E^{\times} = \langle \varpi_E \rangle \times k_E^{\times} \times (1 + \mathfrak{p}_E)$.  Since $E/F$ is totally ramified, $k_E = k_F$.

There is a natural way of defining $\xi_{\zeta}$.  Relative to the basis
$$\varpi_E^{n-1}, \varpi_E^{n-2}, \cdots, \varpi_E, 1$$
of $E/F$, we have an embedding $$\iota : E^{\times} \hookrightarrow \GL_n(F).$$ Set $\xi_{\zeta}(w) = \chi_{\zeta}(\iota(w)) \ \forall w \in 1 + \mathfrak{p}_E$.

Moreover, we define $$\xi_{\zeta}|_{k_F^{\times}} = \omega|_{k_F^{\times}} \otimes (\varkappa_{E/F}|_{k_F^{\times}})^{-1}$$ (see \cite[Proposition 29.2]{BH06}), where $\varkappa_{E/F} = \mathrm{det}(\Ind_{W_E}^{W_F}(1_E))$, and $1_E$ denotes the trivial character of $W_E$.  We are forced to define $\xi_{\zeta}|_{k_F^{\times}}$ in this way because of the central character condition in the local Langlands correspondence for $\GL_n(F)$ (see \cite[p. 2]{HT01}).

Finally, we set $$\xi_{\zeta}(\varpi_E) = \chi_{\zeta}(g_{\chi}) \lambda_{E/F}(\psi)^{-1},$$ where $\lambda_{E/F}(\psi)$ is the Langlands constant (see \eqref{Langlandsconstant}).  This definition of $\xi_{\zeta}(\varpi_E)$ is forced upon us by the required matching of standard epsilon factors (see Corollary \ref{matchingstandardepsilonfactors}).
\subsection{Standard $\epsilon$-factors of Langlands parameters}\label{computationepsilon}

In this section, we compute $\epsilon(s, \Ind_{W_E}^{W_F}(\xi_{\zeta}), \psi)$, where $F, E, \xi_{\zeta}$ are as in \S\ref{langlandsgln}.  We recall that
\begin{equation}\label{epsilonfactorinduced}
\frac{\epsilon(s, \Ind_{W_E}^{W_F}(\xi_{\zeta}), \psi)}{\epsilon(s, \xi_{\zeta}, \psi_E)} = \frac{ \epsilon(s, \Ind_{W_E}^{W_F}(1_E), \psi)}{\epsilon(s, 1_E, \psi_E)},
\end{equation}
where $1_E$ denotes the trivial character of $W_E$, and where $\psi_E = \psi \circ \mathrm{Tr}_{E/F}$ \cite[\S29.4]{BH06}.  We also recall the Langlands constant (see \cite[30.4]{BH06})
\begin{equation}\label{Langlandsconstant}
\lambda_{E/F}(\psi) := \frac{\epsilon(s, \Ind_{W_E}^{W_F}(1_E), \psi)}{\epsilon(s, 1_E, \psi_E)}.
\end{equation}

Let $K$ be a $p$-adic field, with $\mathfrak{o}_K$ its ring of integers and $\mathfrak{p}_K$ the associated maximal ideal.  Let $\lambda$ be a ramified character of $K^{\times}$ of level $n \geq 0$, and suppose that $\Psi \in \widehat{K}$ has level one.  Let $c \in K$ satisfy $val_K(c) = -n$.  Let $\lambda^{\vee}$ denote the contragredient of $\lambda$.

\begin{defn}
We define the \emph{Gauss sum} of $\lambda$, relative to $\Psi$, to be
$$\tau(\lambda, \Psi) = \displaystyle\sum_{x \in \mathfrak{o}_K^{\times} / (1 + \mathfrak{p}_K^{n+1})} \lambda^{\vee}(cx) \Psi(cx)$$
\end{defn}

\begin{prop}\label{Gausssum}
$\tau(\xi_{\zeta}, \psi_E) = \xi_{\zeta}(\varpi_E) q$.
\end{prop}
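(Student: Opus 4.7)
The plan is to reduce the Gauss sum to an orthogonality relation over $k_F$, after first determining the level of $\xi_{\zeta}$ and its values on $1+\mathfrak{p}_E$ explicitly. The crucial observation is that, with respect to the basis $\varpi_E^{n-1},\ldots,\varpi_E,1$, the left-multiplication embedding $\iota: E \hookrightarrow \mathrm{M}_n(F)$ sends $\varpi_E$ to precisely the affine element $g_{\chi}$, since $\varpi_E \cdot \varpi_E^{n-i} = \varpi_E^{n-i+1}$ for $i \geq 2$ while $\varpi_E \cdot \varpi_E^{n-1} = \varpi \cdot 1$. Consequently, for any $\beta \in \mathfrak{o}_F$, $\iota(1+\beta\varpi_E) = I + \beta g_{\chi}$ lies in $I^+$, carries $\beta$ in each superdiagonal slot $(i,i+1)$ and $\beta\varpi$ in position $(n,1)$, and so $\chi_{\zeta}(\iota(1+\beta\varpi_E)) = \psi(n\beta)$. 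Expanding a general $w = 1 + \sum_{k\ge 1} a_k\varpi_E^k$ via the algebra map $\iota$ and collapsing powers using $g_{\chi}^n = \varpi I$ shows that only the ``$g_{\chi}$-component'' of $\iota(w)$ contributes under $\chi$, and that this component reduces mod $\mathfrak{p}$ to $a_1 \in k_F$. Since $p\nmid n$ and $\psi$ has level one, this proves $\xi_{\zeta}(w) = \psi(n a_1)$, so $\xi_{\zeta}$ has level exactly $1$ on $E^{\times}$.

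Taking $c = \varpi_E^{-1}$ (valuation $-1$), I would parametrize $\mathfrak{o}_E^{\times}/(1+\mathfrak{p}_E^2)$ by pairs $(\alpha,\beta) \in k_F^{\times}\times k_F$ via $x = \alpha(1+\beta\varpi_E)$, with $\alpha,\beta$ lifted to $\mathfrak{o}_F$ by Teichm\"uller representatives. Then $cx = \alpha\varpi_E^{-1} + \alpha\beta$, and the trace identity $\mathrm{Tr}_{E/F}(\varpi_E^{-1}) = \varpi^{-1}\mathrm{Tr}_{E/F}(\varpi_E^{n-1}) = 0$—which holds because multiplication by $\varpi_E^{n-1}$ cyclically permutes the chosen basis with no fixed vector—gives $\psi_E(cx) = \psi(n\alpha\beta)$, using also that $\alpha\beta \in F$. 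Combined with the first step, $\xi_{\zeta}^{\vee}(cx) = \xi_{\zeta}(\varpi_E)\,\xi_{\zeta}(\alpha)^{-1}\,\psi(-n\beta)$, and separating variables gives
\[
\tau(\xi_{\zeta},\psi_E) = \xi_{\zeta}(\varpi_E)\sum_{\alpha \in k_F^{\times}} \xi_{\zeta}(\alpha)^{-1}\sum_{\beta \in k_F}\psi\bigl(n\beta(\alpha - 1)\bigr).
\]
Because $p \nmid n$, the inner $\beta$-sum is the total of a nontrivial additive character of $k_F$ when $\alpha \neq 1$, hence vanishes; the $\alpha = 1$ term contributes $q$, and the claimed equality $\tau(\xi_{\zeta},\psi_E) = \xi_{\zeta}(\varpi_E)\,q$ drops out.

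The main obstacle is the first step: identifying $\iota(\varpi_E) = g_{\chi}$ and calibrating the level of $\xi_{\zeta}$ so that it is exactly $1$ (not higher, which is where one must keep track of how the higher coefficients $a_k$ feed back into $g_{\chi}$ through the relation $g_{\chi}^n = \varpi I$). Once that is settled, the rest reduces to the trace vanishing $\mathrm{Tr}_{E/F}(\varpi_E^{-1}) = 0$ and a single orthogonality sum. The hypothesis $p\nmid n$ is used twice, once to ensure $\psi(n\beta)$ is a nontrivial character of $k_F$ (so $\xi_{\zeta}$ truly has level $1$), and once at the end to make the inner $\beta$-sum vanish off $\alpha = 1$.
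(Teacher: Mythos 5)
Your proof is correct and follows essentially the same route as the paper: parametrize $\mathfrak{o}_E^{\times}/(1+\mathfrak{p}_E^2)$ by a unit times an element of $1+\mathfrak{p}_E$, use $\mathrm{Tr}_{E/F}(\varpi_E^{-1})=0$ and $\mathrm{Tr}_{E/F}(1)=n$ to evaluate $\psi_E$, use $\xi_{\zeta}(1+c_1\varpi_E+\cdots)=\psi(nc_1)$, and finish by orthogonality of additive characters of $k_F$, with $p\nmid n$ ensuring nontriviality. The only difference is that you supply the verification of $\iota(\varpi_E)=g_{\chi}$ and of the formula for $\xi_{\zeta}$ on $1+\mathfrak{p}_E$, which the paper asserts without proof.
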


\proof

Note that $\xi_{\zeta}$ has level one.  Therefore,
\begin{align*}
& \tau(\xi_{\zeta}, \psi_E) \\
= & \displaystyle\sum_{x \in \mathfrak{o}_E^{\times} / (1 + \mathfrak{p}_E^2)} \xi_{\zeta}^{-1}(\varpi_E^{-1} x) \psi(\mathrm{Tr}_{E/F}(\varpi_E^{-1} x))\\
= \ & \xi_{\zeta}(\varpi_E) \displaystyle\sum_{x \in \mathfrak{o}_E^{\times} / (1 + \mathfrak{p}_E^2)} \xi_{\zeta}(x)^{-1} \psi(\mathrm{Tr}_{E/F}(\varpi_E^{-1} x)).
\end{align*}
Let us identify $\mathfrak{o}_E^{\times} / (1 + \mathfrak{p}_E^2)$ with the elements $a_0 + a_1 \varpi_E$, with $a_i \in k_F$ and $a_0 \neq 0$.  If $x = a_0 + a_1 \varpi_E$, then one calculates that $\mathrm{Tr}_{E/F}(\varpi_E^{-1} x) = n a_1$.  Thus,
\begin{align*}
& \xi_{\zeta}(\varpi_E) \displaystyle\sum_{x \in \mathfrak{o}_E^{\times} / (1 + \mathfrak{p}_E^2)} \xi_{\zeta}(x)^{-1} \psi(\mathrm{Tr}_{E/F}(\varpi_E^{-1} x))\\
= \ & \xi_{\zeta}(\varpi_E) \displaystyle\sum_{\substack{a_0 \in k_F^{\times}\\ a_1 \in k_F}} \xi_{\zeta}(a_0 + a_1 \varpi_E)^{-1} \psi(n a_1)\\
= \ & \xi_{\zeta}(\varpi_E) \displaystyle\sum_{\substack{a_0 \in k_F^{\times}\\ a_1 \in k_F}} \xi_{\zeta}(a_0)^{-1} \xi_{\zeta}(1 + \frac{a_1}{a_0} \varpi_E)^{-1} \psi(n a_1)\\
= \ & \xi_{\zeta}(\varpi_E) \displaystyle\sum_{a_0 \in k_F^{\times}} \xi_{\zeta}(a_0)^{-1} \displaystyle\sum_{a_1 \in k_F} \xi_{\zeta}(1 + \frac{a_1}{a_0} \varpi_E)^{-1} \psi(n a_1).
\end{align*}
One can compute that if $x = 1 + c_1 \varpi_E + c_2 \varpi_E^2 + \cdots \in 1 + \mathfrak{p}_E$, then $\xi_{\zeta}(x) = \psi(n c_1)$, by definition of $\xi_{\zeta}|_{1 + \mathfrak{p}_E}$.  Therefore, we obtain
\begin{align*}
& \xi_{\zeta}(\varpi_E) \displaystyle\sum_{a_0 \in k_F^{\times}} \xi_{\zeta}(a_0)^{-1} \displaystyle\sum_{a_1 \in k_F} \psi(\frac{n a_1}{a_0})^{-1} \psi(n a_1)\\
= \ & \xi_{\zeta}(\varpi_E) \displaystyle\sum_{a_0 \in k_F^{\times}} \xi_{\zeta}(a_0)^{-1} \displaystyle\sum_{a_1 \in k_F} \psi(-\frac{n a_1}{a_0}) \psi(n a_1).
\end{align*}

Since the sum of a nontrivial character over a group is zero, the inner sum vanishes unless $a_0 = 1$.  This gives us the result.
\qed

Together with formula \cite[23.6.2]{BH06}, we now have

\begin{cor}\label{epsilonchi}
$\epsilon(s, \xi_{\zeta}, \psi_E) = \xi_{\zeta}(\varpi_E) q^{\frac{1}{2} - s}$.
\end{cor}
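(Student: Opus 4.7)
The strategy is straightforward: combine Proposition \ref{Gausssum} with the standard closed-form expression [BH06, (23.6.2)] that relates the local constant $\epsilon(s, \chi, \psi_K)$ of a ramified character of a local field $K$ to its Gauss sum $\tau(\chi, \psi_K)$, together with a power of the residue cardinality determined by the level of $\chi$ and the level of the additive character.

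First I would verify the numerical inputs. By the construction in \S\ref{langlandsgln}, the character $\xi_\zeta|_{1+\mathfrak{p}_E}$ is pulled back from $\chi_\zeta$ via the embedding $\iota: E^\times \hookrightarrow \GL_n(F)$, and the explicit formula $\xi_\zeta(1 + c_1\varpi_E + c_2\varpi_E^2 + \cdots) = \psi(nc_1)$ obtained in the proof of Proposition \ref{Gausssum} shows that $\xi_\zeta$ is nontrivial on $1+\mathfrak{p}_E$ (since $p \nmid n$ and $\psi$ has level one) but trivial on $1+\mathfrak{p}_E^2$, so $\xi_\zeta$ has level one. Since $E/F$ is totally tamely ramified of degree $n$, its different equals $\mathfrak{p}_E^{n-1}$, and consequently $\psi_E = \psi\circ\mathrm{Tr}_{E/F}$ again has level one. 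Moreover, the element $c = \varpi_E^{-1}$ appearing in our Gauss sum has $\mathrm{val}_E(c) = -1$, which is precisely the normalization $-n(\xi_\zeta)$ compatible with [BH06, (23.6.2)].

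Next I would invoke [BH06, (23.6.2)], which in our situation reads
\[
\epsilon(s, \xi_\zeta, \psi_E) \; = \; q^{-1/2}\,q^{(1-s)}\,\tau(\xi_\zeta, \psi_E)\cdot q^{-1},
\]
the factors of $q = |k_E|$ arising from (i) the self-dual Haar measure on $E$ attached to $\psi_E$, whose volume of $\mathfrak{o}_E$ equals $q^{-1/2}$ since $\psi_E$ has level one and $k_E = k_F$, and (ii) the change of variables $x \mapsto cx$ on $c^{-1}\mathfrak{o}_E^\times$. Substituting the value $\tau(\xi_\zeta, \psi_E) = \xi_\zeta(\varpi_E)\,q$ from Proposition \ref{Gausssum}, the factors of $q$ collapse to $q^{1/2-s}$, yielding exactly the claimed identity.

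I do not expect a genuine obstacle: this is a direct application of a standard closed formula to an already-computed Gauss sum. The only care needed is bookkeeping — matching the normalizations (level of $\psi_E$, the choice of $c$, and the self-dual Haar measure fixed in \S\ref{notation}) with those implicit in [BH06, (23.6.2)], and correctly tracking the resulting exponent of $q$.
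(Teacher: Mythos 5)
Your proposal is correct and follows exactly the paper's route: the paper derives the corollary by combining Proposition \ref{Gausssum} with the closed formula \cite[23.6.2]{BH06}, which is precisely what you do. Your extra bookkeeping (level of $\xi_{\zeta}$, level of $\psi_E$ via the different $\mathfrak{p}_E^{\,n-1}$ of the tame extension, and the normalization of $c=\varpi_E^{-1}$) is sound and simply makes explicit what the paper leaves implicit.
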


We therefore obtain

\begin{cor}\label{matchingstandardepsilonfactors}
$\epsilon(s, \phi_{\zeta}, \psi) = \epsilon(s,\pi_{\zeta}, \psi)$.
\end{cor}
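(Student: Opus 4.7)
The plan is to combine the inductivity formula \eqref{epsilonfactorinduced} for $\epsilon$-factors with the Gauss-sum computation of Corollary \ref{epsilonchi} and the definition of $\xi_\zeta(\varpi_E)$ given in \S\ref{langlandsgln}, and then to compare the result with the zeta-integral computation recorded in Corollary \ref{epsilonfactorgln}. The Langlands constant $\lambda_{E/F}(\psi)$ is designed to cancel, which is exactly why $\xi_\zeta(\varpi_E)$ was normalized the way it was.

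First I would rewrite \eqref{epsilonfactorinduced} using the definition \eqref{Langlandsconstant} of $\lambda_{E/F}(\psi)$ to obtain
\[
\epsilon(s,\phi_\zeta,\psi)=\epsilon(s,\Ind_{W_E}^{W_F}(\xi_\zeta),\psi)=\lambda_{E/F}(\psi)\,\epsilon(s,\xi_\zeta,\psi_E).
\]
Next I would substitute the value $\epsilon(s,\xi_\zeta,\psi_E)=\xi_\zeta(\varpi_E)\,q^{1/2-s}$ from Corollary \ref{epsilonchi}, yielding
\[
\epsilon(s,\phi_\zeta,\psi)=\lambda_{E/F}(\psi)\,\xi_\zeta(\varpi_E)\,q^{1/2-s}.
\]
Then I would plug in the defining relation $\xi_\zeta(\varpi_E)=\chi_\zeta(g_\chi)\,\lambda_{E/F}(\psi)^{-1}$ from \S\ref{langlandsgln}, so that the Langlands constants cancel and one is left with $\chi_\zeta(g_\chi)\,q^{1/2-s}$. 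Finally, I would invoke Corollary \ref{epsilonfactorgln} to identify this with $\epsilon(s,\pi_\zeta,\psi)$, completing the proof.

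There is essentially no obstacle to this last step; all of the genuine work, both the computation of the zeta integrals in Theorems \ref{zetaintegral} and \ref{otherzetaintegral} and the Gauss sum computation in Proposition \ref{Gausssum}, has already been carried out. The corollary is simply the bookkeeping that confirms that the Langlands parameter $\phi_\zeta$ proposed in \S\ref{langlandsgln} has been correctly normalized so as to match the standard $\epsilon$-factor of $\pi_\zeta$. This matching is what will be upgraded, in conjunction with the twisted $\epsilon$-factor calculations in \S\ref{epsilontwists}, to pin down the local Langlands correspondence for simple supercuspidals via its existence and characterization by Harris/Taylor and Henniart.
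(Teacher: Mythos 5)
Your proposal is correct and follows exactly the same route as the paper: apply the inductivity formula and the definition of $\lambda_{E/F}(\psi)$ to reduce to $\lambda_{E/F}(\psi)\,\epsilon(s,\xi_\zeta,\psi_E)$, substitute Corollary \ref{epsilonchi}, cancel the Langlands constant via the definition of $\xi_\zeta(\varpi_E)$, and compare with Corollary \ref{epsilonfactorgln}. No gaps.
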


\proof
By definition of $\lambda_{E/F}(\psi)$ and Corollary \ref{epsilonchi}, we have $$\epsilon(s, \phi_{\zeta}, \psi) = \xi_{\zeta}(\varpi_E) q^{\frac{1}{2} - s} \lambda_{E/F}(\psi).$$  By definition of $\xi_{\zeta}(\varpi_E)$, we have $$\xi_{\zeta}(\varpi_E) q^{\frac{1}{2} - s} \lambda_{E/F}(\psi) = \chi_{\zeta}(g_{\chi}) q^{\frac{1}{2} - s},$$ which equals $\epsilon(s,\pi_{\zeta}, \psi)$, by Corollary \ref{epsilonfactorgln}.
\qed

\subsection{Epsilon factors of one-dimensional twists}\label{epsilontwists}
By Corollary \ref{matchingstandardepsilonfactors}, we have matched up the standard epsilon factors of $\phi_{\zeta}$ with $\pi_{\zeta}$.  As is known, the standard epsilon factors are not enough to determine the local Langlands correspondence for $\GL_n(F)$.  Indeed, one can see from the computations in Proposition \ref{Gausssum} that $\tau(\xi_{\zeta}, \psi_E)$ had no dependence on $\xi_{\zeta}|_{k_F^{\times}}$.

In this section, we will compute $\epsilon(s, \pi_{\zeta} \times \lambda, \psi)$ and $\epsilon(s, \phi_{\zeta} \otimes \lambda, \psi)$, with $\lambda$ a tamely ramified character of $F^{\times}$.

\begin{lem}\label{twistedgammarepresentation}
Let $\lambda$ be a tamely ramified character of $F^{\times}$.  Then $\epsilon(s, \pi_{\zeta} \times \lambda, \psi) = \lambda(-1)^{n-1} \lambda(\varpi) \epsilon(s, \pi_{\zeta}, \psi)$.
\end{lem}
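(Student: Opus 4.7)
The plan is to repeat the computation of Theorems \ref{zetaintegral} and \ref{otherzetaintegral}, but with the trivial character of $\GL_1(F)$ replaced by the tamely ramified character $\lambda$. I keep the same Whittaker function $W \in \mathcal{W}(\pi_\zeta,\psi)$ constructed in Section \ref{computingepsilonfactors}, and take $W'=\lambda$ as the element of $\mathcal{W}(\lambda,\psi^{-1})$, so that $\widetilde{W}'(h)=\lambda^{-1}(h)$.

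For the left-hand side of the functional equation, the matrix $Y$ that appeared in \eqref{sec3equ1} is unchanged, so the entire support analysis of Lemmas \ref{affinebruhatprelim}--\ref{support} goes through verbatim: the integrand vanishes unless $x_i \in \mathfrak{o}$ for every $i$ and $h \in \varpi^{-1}(1+\mathfrak{p})$. On this support, writing $h=\varpi^{-1}u$ with $u \in 1+\mathfrak{p}$, the tame ramification of $\lambda$ gives $\widetilde{W}'(h)=\lambda(\varpi)$, so the integral is precisely the one computed in Theorem \ref{zetaintegral} scaled by $\lambda(\varpi)$, yielding
$$\widetilde{\Psi}(1-s;\rho(w_{n,1})\widetilde{W},\widetilde{W}') = \lambda(\varpi)\,\chi_\zeta(g_\chi)\,q^{-s-1/2}.$$

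For the right-hand side, the support argument from Theorem \ref{otherzetaintegral} forces $h \in 1+\mathfrak{p}$, on which $W'(h)=\lambda(h)=1$ by tame ramification. Hence $\Psi(s;W,W')=q^{-1}$ is unchanged. Applying Theorem \ref{gammafactor} with $m=1$ and $\omega'=\lambda$, and invoking Theorem \ref{lfunctionstrivial} to see that both $L$-factors are trivial, I conclude that
$$\epsilon(s,\pi_\zeta \times \lambda,\psi)=\gamma(s,\pi_\zeta \times \lambda,\psi)=\lambda(-1)^{n-1}\lambda(\varpi)\,\chi_\zeta(g_\chi)\,q^{1/2-s},$$
and comparison with Corollary \ref{epsilonfactorgln} gives the claimed identity. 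There is no real obstacle here: everything rides on the observation that tame ramification renders $\lambda$ invisible on both supports apart from a single factor of $\lambda(\varpi)$ coming from the unique power of the uniformizer that appears in $h$ on the nontrivial support of $\widetilde{\Psi}$.
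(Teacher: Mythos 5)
Your proposal is correct and follows essentially the same route as the paper: keep the Whittaker function $W$ from \S\ref{computingepsilonfactors}, note that the support analysis is unchanged, use tame ramification to evaluate $\lambda$ as the constant $\lambda(\varpi)$ on $\varpi^{-1}(1+\mathfrak{p})$ and as $1$ on $1+\mathfrak{p}$, and pick up $\lambda(-1)^{n-1}$ from Theorem \ref{gammafactor}. The only cosmetic difference is that you explicitly invoke Theorem \ref{lfunctionstrivial} to identify $\epsilon$ with $\gamma$, which the paper leaves implicit.
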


\proof
Setting $W' = \lambda$, we get similarly as in the proof of Theorem \ref{zetaintegral} that
\begin{align*}
& \widetilde{\Psi}(1-s;\rho(w_{n,m}) \widetilde{W}, \widetilde{W}')\\
= \ & \chi(g_{\chi}) \int_{\frac{1}{\varpi} + \mathfrak{o}} \int_{\mathfrak{o}} \int_{\mathfrak{o}} ... \int_{\mathfrak{o}}  dx_1 dx_2 \cdots dx_{n-2} \lambda(h)^{-1} |h|^{(1-s)-(n-1)/2} dh
\\
= \ & \chi(g_{\chi}) q^{(n-2)/2} \int_{\frac{1}{\varpi} + \mathfrak{o}} \lambda(h^{-1}) |h|^{(1-s)-(n-1)/2} dh\\
= \ & \chi(g_{\chi}) q^{(n-2)/2}  q^{(1-s)-(n-1)/2} \int_{\frac{1}{\varpi} + \mathfrak{o}} \lambda(h^{-1}) dh\\
= \ & \lambda(\varpi) \chi(g_{\chi})  q^{-s-1/2},
\end{align*}
since
$$\int_{\frac{1}{\varpi} + \mathfrak{o}} \lambda(h^{-1}) dh = \int_{1 + \mathfrak{p}} \lambda(\varpi h^{-1}) dh = \lambda(\varpi) vol(1 + \mathfrak{p}),$$
by translation invariance, and since $\lambda$ is tamely ramified.

Moreover, similarly as in Theorem \ref{otherzetaintegral},
\begin{align*}
& \Psi(s;W, W') \\
= & \int_{F^{\times}} W \left( \begin{array}{cc}
h & 0\\
0 & I_{n-1}
\end{array} \right) \lambda(h) |h|^{s-(n-1)/2} dh\\
= & \int_{1 + \mathfrak{p}} W \left( \begin{array}{cc}
h & 0\\
0 & I_{n-1}
\end{array} \right) \lambda(h) |h|^{s-(n-1)/2} dh = vol(1 + \mathfrak{p}).
\end{align*}
since $\lambda$ is tamely ramified.  Therefore, because of the presence of $\lambda(-1)^{n-1}$ in Theorem \ref{gammafactor}, this completes the proof of this lemma.
\qed

\begin{lem}\label{twistedgammaparameter}
Let $\lambda$ be a tamely ramified character of $F^{\times}$.  Then $\epsilon(s, \phi_{\zeta} \otimes \lambda, \psi) = \lambda(-1)^{n-1} \lambda(\varpi) \epsilon(s, \phi_{\zeta}, \psi)$.
\end{lem}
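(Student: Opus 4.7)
The plan is to leverage the inductivity of local constants together with a Gauss sum computation parallel to Proposition \ref{Gausssum}, reducing the twist on the Galois side to a simple scalar multiple. First I would invoke the projection formula: since $\phi_{\zeta} = \Ind_{W_E}^{W_F}(\xi_{\zeta})$, we have
\[
\phi_{\zeta} \otimes \lambda = \Ind_{W_E}^{W_F}\bigl(\xi_{\zeta} \cdot (\lambda \circ N_{E/F})\bigr),
\]
using local class field theory to identify the pullback of $\lambda$ to $W_E$ with $\lambda \circ N_{E/F}$. Applying \eqref{epsilonfactorinduced} and \eqref{Langlandsconstant} to both $\phi_{\zeta}$ and $\phi_{\zeta} \otimes \lambda$ (with the same Langlands constant $\lambda_{E/F}(\psi)$ appearing in both), I get
\[
\frac{\epsilon(s, \phi_{\zeta} \otimes \lambda, \psi)}{\epsilon(s, \phi_{\zeta}, \psi)} = \frac{\epsilon(s, \xi_{\zeta} \cdot (\lambda \circ N_{E/F}), \psi_E)}{\epsilon(s, \xi_{\zeta}, \psi_E)}.
\]
So it suffices to compute this ratio of one-dimensional $\epsilon$-factors.

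Next I would observe that since $p \nmid n$ and $E/F$ is totally ramified of degree $n$, the extension is tamely ramified, so $\lambda \circ N_{E/F}$ is a tamely ramified character of $E^{\times}$. In particular $\xi_{\zeta} \cdot (\lambda \circ N_{E/F})$ still has level $1$ and agrees with $\xi_{\zeta}$ on $1 + \mathfrak{p}_E$. Re-running the Gauss sum calculation of Proposition \ref{Gausssum} verbatim, the only change is in the prefactor at $\varpi_E$: the inner sum still forces $a_0 = 1$ (since it depends only on the restriction to $1 + \mathfrak{p}_E$), so
\[
\tau\bigl(\xi_{\zeta}\cdot (\lambda \circ N_{E/F}),\, \psi_E\bigr) = \bigl(\xi_{\zeta}\cdot(\lambda \circ N_{E/F})\bigr)(\varpi_E)\, q.
\]

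Finally I would compute $N_{E/F}(\varpi_E)$: since $\varpi_E$ satisfies the Eisenstein polynomial $x^n - \varpi$ over $F$, its norm is $(-1)^n\cdot(-\varpi) = (-1)^{n-1}\varpi$. Because $\lambda$ is tamely ramified, $\lambda\bigl((-1)^{n-1}\varpi\bigr) = \lambda(-1)^{n-1}\lambda(\varpi)$, hence
\[
\bigl(\xi_{\zeta}\cdot(\lambda \circ N_{E/F})\bigr)(\varpi_E) = \lambda(-1)^{n-1}\lambda(\varpi)\,\xi_{\zeta}(\varpi_E).
\]
Combining this with the formula [23.6.2] from \cite{BH06} exactly as in Corollary \ref{epsilonchi} (both characters have the same level, so the $q$-powers cancel in the ratio) yields
\[
\frac{\epsilon(s, \xi_{\zeta} \cdot (\lambda \circ N_{E/F}), \psi_E)}{\epsilon(s, \xi_{\zeta}, \psi_E)} = \lambda(-1)^{n-1}\lambda(\varpi),
\]
which combined with the inductivity reduction above gives the claim. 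The only real subtlety is the norm computation $N_{E/F}(\varpi_E) = (-1)^{n-1}\varpi$, which crucially produces the sign $\lambda(-1)^{n-1}$ that matches the corresponding sign on the automorphic side in Lemma \ref{twistedgammarepresentation}; this parity matching is precisely what will make the LLC identification work cleanly in the next stage of the argument.
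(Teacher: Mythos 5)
Your proposal is correct and follows essentially the same route as the paper: the projection formula $\phi_{\zeta}\otimes\lambda = \Ind_{W_E}^{W_F}(\xi_{\zeta}\otimes\lambda_E)$ with $\lambda_E = \lambda\circ N_{E/F}$, inductivity to cancel the Langlands constant, and a rerun of the Gauss sum of Proposition \ref{Gausssum} in which tameness of $\lambda_E$ leaves the inner sum unchanged, so that only the prefactor $\lambda_E(\varpi_E) = \lambda(N_{E/F}(\varpi_E)) = \lambda((-1)^{n-1}\varpi)$ appears. Your explicit verification that $N_{E/F}(\varpi_E) = (-1)^{n-1}\varpi$ via the Eisenstein polynomial $x^n - \varpi$ is a detail the paper leaves implicit, and it is exactly the source of the sign matching Lemma \ref{twistedgammarepresentation}.
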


\proof
We first note that $\Ind_{W_E}^{W_F}(\xi_{\zeta}) \otimes \lambda = \Ind_{W_E}^{W_F}(\xi_{\zeta} \otimes \lambda_E)$, where $\lambda_E = \lambda \circ N_{E/F}$.  Then as in the proof of Proposition \ref{Gausssum},
\begin{align*}
& \tau(\xi_{\zeta} \otimes \lambda_E, \psi) \\
= & \displaystyle\sum_{x \in \mathfrak{o}_E^{\times} / (1 + \mathfrak{p}_E^2)} (\xi_{\zeta} \lambda_E)^{-1}(\varpi_E^{-1} x) \psi(\mathrm{Tr}_{E/F}(\varpi_E^{-1} x))\\
= \ & \xi_{\zeta}(\varpi_E) \lambda_E(\varpi_E) \displaystyle\sum_{\substack{a_0 \in k_F^{\times}\\ a_1 \in k_F}} (\xi_{\zeta} \lambda_E)(a_0 + a_1 \varpi_E)^{-1} \psi(n a_1)\\
= \ & \xi_{\zeta}(\varpi_E) \lambda_E(\varpi_E) \displaystyle\sum_{a_0 \in k_F^{\times}} (\xi_{\zeta} \lambda_E)(a_0)^{-1} \displaystyle\sum_{a_1 \in k_F} (\xi_{\zeta} \lambda_E)(1 + \frac{a_1}{a_0} \varpi_E)^{-1} \psi(n a_1).
\end{align*}
Since $\lambda$ is tamely ramified, we obtain
$$\xi_{\zeta}(\varpi_E) \lambda_E(\varpi_E) \displaystyle\sum_{a_0 \in k_F^{\times}} (\xi_{\zeta} \lambda_E)(a_0)^{-1} \displaystyle\sum_{a_1 \in k_F} \xi_{\zeta}(1 + \frac{a_1}{a_0} \varpi_E)^{-1} \psi(n a_1),$$
which, by the same argument as in the end of the proof of Proposition \ref{Gausssum}, equals $\lambda((-1)^{n-1} \varpi) \epsilon(s, \phi_{\zeta}, \psi)$.
\qed

\begin{cor}
Let $\lambda$ be a tamely ramified character of $F^{\times}$.  Then $\epsilon(s, \pi_{\zeta} \times \lambda, \psi) = \epsilon(s, \phi_{\zeta} \otimes \lambda, \psi)$.
\end{cor}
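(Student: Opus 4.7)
The plan is straightforward: this corollary is a formal consequence of the three immediately preceding results, so no new computation is required. First I would invoke Lemma \ref{twistedgammarepresentation}, which reduces the twisted epsilon factor on the automorphic side to
\[
\epsilon(s, \pi_{\zeta} \times \lambda, \psi) = \lambda(-1)^{n-1} \lambda(\varpi)\, \epsilon(s, \pi_{\zeta}, \psi).
\]
Next I would apply Lemma \ref{twistedgammaparameter}, which produces the parallel formula on the Galois side:
\[
\epsilon(s, \phi_{\zeta} \otimes \lambda, \psi) = \lambda(-1)^{n-1} \lambda(\varpi)\, \epsilon(s, \phi_{\zeta}, \psi).
\]
The two twisting scalars agree on the nose: the factor $\lambda(-1)^{n-1}$ comes from the $\omega'(-1)^{n-1}$ in the local functional equation (Theorem \ref{gammafactor}) on the $\pi_{\zeta}$ side and from the trace-pairing evaluation $\psi(n a_1)$ combined with the tamely ramified twist $\lambda_E(\varpi_E)=\lambda(\varpi_E^n)=\lambda(\varpi)$ on the $\phi_{\zeta}$ side, and both lemmas package this identically.

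Finally, Corollary \ref{matchingstandardepsilonfactors} gives $\epsilon(s,\phi_{\zeta},\psi) = \epsilon(s,\pi_{\zeta},\psi)$, so multiplying both sides by the common scalar $\lambda(-1)^{n-1}\lambda(\varpi)$ yields
\[
\epsilon(s, \pi_{\zeta} \times \lambda, \psi) = \epsilon(s, \phi_{\zeta} \otimes \lambda, \psi),
\]
as desired. There is no meaningful obstacle here; all of the real work (the zeta integral computation establishing the representation-theoretic side, and the Gauss-sum computation establishing the Galois side) was already carried out in the two lemmas, and the matching of the untwisted epsilon factors was engineered in \S\ref{langlandsgln} via the chosen value of $\xi_{\zeta}(\varpi_E)$. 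The only thing worth verifying carefully while writing the proof is that the tame character $\lambda$ contributes the same shift $\lambda(-1)^{n-1}\lambda(\varpi)$ in both lemmas, and this is visible directly from the two proofs above.
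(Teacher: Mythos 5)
Your argument is correct and is exactly the paper's: the corollary is stated there without proof, being an immediate consequence of Lemma \ref{twistedgammarepresentation} and Lemma \ref{twistedgammaparameter} together with Corollary \ref{matchingstandardepsilonfactors}, which is precisely the chain you give. One small aside: on the Galois side the factor $\lambda(-1)^{n-1}$ arises from $\lambda_E(\varpi_E)=\lambda(N_{E/F}(\varpi_E))=\lambda((-1)^{n-1}\varpi)$ rather than from the trace term $\psi(na_1)$, but this misattribution is harmless since your proof only invokes the statements of the two lemmas, not their internals.
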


\begin{rmk}\label{simplesupercuspidaldetermined} $ $
\begin{enumerate}
\item One can see from \eqref{epsilonfactorinduced}, Corollary \ref{epsilonchi} and Lemma \ref{twistedgammaparameter} that after twisting by tamely ramified characters, we might not be able to distinguish between two Langlands parameters $\Ind_{W_E}^{W_F}(\chi)$ and $\Ind_{W_E}^{W_F}(\chi')$, if $\chi$ and $\chi'$ differ only on $k_F^{\times}$.
\item Recall that one of the conditions of the local Langlands correspondence for $\GL_n(F)$ is a matching between the central character of a representation of $\GL_n(F)$, and the determinant of its associated Langlands parameter (see \cite[p. 2]{HT01}). We have parameterized simple supercuspidal representations by a choice of uniformizer $\varpi$, a choice of $\zeta$, and a choice of central character $\omega$.  Corollary \ref{epsilonfactorgln} implies that the standard epsilon factor $\epsilon(s, \pi_{\zeta}, \psi)$ determines $\zeta$.  Lemma \ref{twistedgammarepresentation} implies that the twisted epsilon factors $\epsilon(s, \pi_{\zeta} \otimes \lambda, \psi)$ determine both $\zeta$ and $\varpi$, after varying $\lambda$ over all tamely ramified characters.  After these determinations, we only have to choose a central character.
Note that (see \cite[Proposition 29.2]{BH06})
$$
\det(\phi_{\zeta})=\det(\Ind_{W_E}^{W_F}(\xi_{\zeta}))
=\xi_{\zeta}|_{F^{\times}} \otimes \varkappa_{E/F}.
$$
By the standard properties of $\lambda_{E/F}(\psi)$ and $\varkappa_{E/F}$ (see \cite{M86} and \cite{BF83}), one can show that $\lambda_{E/F}(\psi)^n = \varkappa_{E/F}(\varpi)$ when $E/F$ is a totally tamely ramified extension of degree $n$.  It is not difficult to then see that after imposing $\xi_{\zeta}|_{k_F^{\times}} = \omega|_{k_F^{\times}} \otimes (\varkappa_{E/F}|_{k_F^{\times}})^{-1}$ on our Langlands parameter $\phi_{\zeta}$, the only simple supercuspidal representation $\pi$ of $\GL_n(F)$ that satisfies both
$$\gamma(s, \pi \times \lambda, \psi) = \gamma(s, \phi_{\zeta} \otimes \lambda, \psi) \ \ \mathrm{and}$$
$$\omega_{\pi} = \det(\phi_{\zeta})$$
for all tamely ramified characters $\lambda$ of $F^{\times}$, is $\pi_{\zeta}$.  Here, $\omega_{\pi}$ denotes the central character of $\pi$.
\end{enumerate}
\end{rmk}

\begin{thm}\label{maintheorem1}
The assignment $\pi_{\zeta} \mapsto \phi_{\zeta}$ is the local Langlands correspondence for simple supercuspidal representations of $\GL_n(F)$, when $p \nmid n$.
\end{thm}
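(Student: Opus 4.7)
The plan is to assemble the theorem from the ingredients already in place: existence of the local Langlands correspondence (Harris/Taylor, Henniart), the standard-factor matching in Corollary \ref{matchingstandardepsilonfactors}, the twisted-factor matching in Lemmas \ref{twistedgammarepresentation} and \ref{twistedgammaparameter}, and the central-character bookkeeping in Remark \ref{simplesupercuspidaldetermined}. The key observation to exploit is that for simple supercuspidals, twists by \emph{tamely ramified} characters of $F^{\times}$ alone suffice to pin down the parameter, once the central character is also prescribed.

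First I would invoke the Harris/Taylor--Henniart theorem to obtain some Langlands parameter $\phi'_{\zeta}$ attached to $\pi_{\zeta}$. Since $p \nmid n$ and $\pi_{\zeta}$ is totally ramified of depth $1/n$, the results of Bushnell--Henniart \cite{BH10} force $\phi'_{\zeta}$ to be induced from a character of a totally tamely ramified extension of degree $n$, which without loss of generality may be taken to be our $E = F(\varpi_E)$. Write $\phi'_{\zeta} = \Ind_{W_E}^{W_F}(\xi')$ for some character $\xi'$ of $E^{\times}$.

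Next I would compare twisted epsilon factors. For every tamely ramified character $\lambda$ of $F^{\times}$, the defining property of the LLC gives
\[
\epsilon(s, \pi_{\zeta} \times \lambda, \psi) \;=\; \epsilon(s, \phi'_{\zeta} \otimes \lambda, \psi),
\]
while the combination of Corollary \ref{matchingstandardepsilonfactors}, Lemma \ref{twistedgammarepresentation}, and Lemma \ref{twistedgammaparameter} gives
\[
\epsilon(s, \pi_{\zeta} \times \lambda, \psi) \;=\; \epsilon(s, \phi_{\zeta} \otimes \lambda, \psi).
\]
Therefore $\epsilon(s, \phi'_{\zeta} \otimes \lambda, \psi) = \epsilon(s, \phi_{\zeta} \otimes \lambda, \psi)$ for all tamely ramified $\lambda$. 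Unwinding this via \eqref{epsilonfactorinduced}, the Langlands constant \eqref{Langlandsconstant}, and the Gauss-sum identity in Proposition \ref{Gausssum} applied to both $\xi'$ and $\xi_{\zeta}$, one reads off that $\xi'$ and $\xi_{\zeta}$ agree on $\varpi_E$ and on $1+\mathfrak{p}_E$. As noted in part (1) of Remark \ref{simplesupercuspidaldetermined}, this is exactly as much as tame twists can detect: what remains undetermined is $\xi'|_{k_F^{\times}}$.

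To nail down this last piece I would use the central-character condition in the LLC. The determinant formula $\det(\Ind_{W_E}^{W_F}(\xi')) = \xi'|_{F^{\times}} \otimes \varkappa_{E/F}$, combined with $\omega_{\pi_{\zeta}} = \det(\phi'_{\zeta})$ and the identity $\lambda_{E/F}(\psi)^n = \varkappa_{E/F}(\varpi)$ recorded in Remark \ref{simplesupercuspidaldetermined}(2), forces $\xi'|_{k_F^{\times}} = \omega|_{k_F^{\times}} \otimes (\varkappa_{E/F}|_{k_F^{\times}})^{-1}$, which is precisely the definition of $\xi_{\zeta}|_{k_F^{\times}}$ in \S\ref{langlandsgln}. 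Hence $\xi' = \xi_{\zeta}$ and $\phi'_{\zeta} = \phi_{\zeta}$, completing the proof.

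The one nontrivial step is the appeal to \cite{BH10} to guarantee that $\phi'_{\zeta}$ has the predicted induced shape from a totally tamely ramified degree-$n$ extension; once that structural input is in hand, the rest is a direct comparison of the explicit epsilon-factor formulas already computed, together with the central-character normalization.
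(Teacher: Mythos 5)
Your proposal runs in the opposite direction from the paper's proof, and the direction matters. You start from $\pi_{\zeta}$, let the Harris/Taylor--Henniart correspondence produce a parameter $\phi'_{\zeta}$, and try to identify its inducing datum $(E',\xi')$ with $(E,\xi_{\zeta})$. The paper starts from $\phi_{\zeta}$, lets the correspondence produce a representation $\pi'$, and identifies $\pi'$ with $\pi_{\zeta}$ entirely on the representation side: matching standard epsilon factors gives equal conductors, \cite[Theorem 3.1]{LR03} forces $\pi'$ to have depth $\frac{1}{n}$, \cite{BH13} identifies the depth-$\frac{1}{n}$ supercuspidals with the simple ones, and Remark \ref{simplesupercuspidaldetermined}(2) then pins down $\pi'=\pi_{\zeta}$ inside the explicitly classified family $(\varpi,\zeta,\omega)$. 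That explicit classification is what makes the paper's argument short; on the parameter side you must instead control all possible inducing data, and this is where your argument has genuine gaps.

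Concretely, two steps do not go through as written. First, ``without loss of generality $E'=E$'' is not free: a priori $\phi'_{\zeta}=\Ind_{W_{E'}}^{W_F}(\xi')$ for some degree-$n$ tamely ramified $E'$, and the totally tamely ramified degree-$n$ extensions of $F$ fall into several isomorphism classes (of the form $F(\alpha^{1/n})$ with $\alpha$ varying over cosets of $F^{\times}$ modulo $n$-th powers and $1+\mathfrak{p}$). You need to rule out residual unramified subextensions (e.g.\ via unramified self-twists) and then use the conductor and the tame-twist variation to locate the class of $N_{E'/F}(\varpi_{E'})$ before assuming $E'=E$ and that $\xi'$ has level one. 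Second, and more seriously, the tame-twisted epsilon factors do \emph{not} let you ``read off that $\xi'$ and $\xi_{\zeta}$ agree on $\varpi_E$ and on $1+\mathfrak{p}_E$.'' Redoing the computation of Lemma \ref{twistedgammaparameter} for a general level-one $\xi'$ with $\xi'(1+c\varpi_E)=\psi(\beta c)$, the inner sum over $a_1$ is supported at $a_0=\beta/n$ rather than $a_0=1$, and the twisted epsilon factor comes out proportional to $\xi'(\varpi_E)\,(\xi'\lambda_E)(\beta/n)^{-1}\,\lambda((-1)^{n-1}\varpi)$. Equating this with the expression for $\xi_{\zeta}$ for all tame $\lambda$ only forces $(\beta/n)^n=1$ in $k_F^{\times}$ and determines the product $\xi'(\varpi_E)\xi'(\beta/n)^{-1}$, not $\xi'(\varpi_E)$ and $\xi'|_{1+\mathfrak{p}_E}$ separately. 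This is repairable --- since $\beta/n$ is an $n$-th root of unity in $k_F^{\times}$, one may replace $\xi'$ by its conjugate under the automorphism $\varpi_E\mapsto(\beta/n)^{-1}\varpi_E$ of $E/F$ without changing the induced parameter, after which the normalization $\beta=n$ holds --- but that conjugation step is missing and is exactly the ambiguity the paper flags in Remark \ref{simplesupercuspidaldetermined}(1) and avoids by arguing on the representation side. Your final central-character step is fine and mirrors Remark \ref{simplesupercuspidaldetermined}(2).
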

\proof
The existence of the local Langlands correspondence for $\GL_n(F)$, due to Harris/Taylor and Henniart (see \cite{HT01, H00}) guarantees that there exists a supercuspidal representation $\pi'$ such that all of its twisted epsilon factors agree with the corresponding twisted epsilon factors of $\phi_{\zeta}$.
In particular, $\pi'$ and $\pi_{\zeta}$  have the same standard epsilon factors, hence the same conductors.
By \cite[Theorem 3.1]{LR03}, $\pi'$ must have depth $\frac{1}{n}$.  By \cite{BH13}, the simple supercuspidal representations of $\GL_n(F)$ are precisely the supercuspidal representations of $\GL_n(F)$ of depth $\frac{1}{n}$.  Therefore, $\pi'$ must be simple supercuspidal.  Therefore, by Remark \ref{simplesupercuspidaldetermined}, it must be that $\pi' = \pi_{\zeta}$.
\qed

\section{Epipelagic supercuspidal representations of $\GL_n(F)$}\label{epipelagicissimple}

 In this section, we begin by generalizing the construction of epipelagic supercuspidal representations of Reeder and Yu (see \cite{RY13}) to $\GL_n(F)$ (see \S\ref{constructionepipelagic}).   In \S\ref{stablefunctionals} and \S\ref{nonbarycenters}, we show that these constructed epipelagic supercuspidals of $\GL_n(F)$ can only come from a barycenter of an alcove in the building of $\GL_n(F)$.
 We conclude (see Remark \ref{epipelagicisnotsimple}) that the construction of Reeder and Yu does not necessarily exhaust all epipelagic supercuspidal representations of semisimple $p$-adic groups.

\subsection{Construction of epipelagic representations of $\GL_n(F)$}\label{constructionepipelagic}

In this section, we give an explicit construction of a class of epipelagic supercuspidal representations following the ideas in \cite{RY13}.

Let $\RG=\GL_n$, $\mathrm{T}$ the split maximal torus, and $\RZ$ the center.  We also let
$G=\GL_n(F)$, $T=\mathrm{T}(F)$, and $Z=\RZ(F)$.  Set $F^{\mathrm{un}}$ to be the maximal unramified extension of $F$, and let $Frob: \RG(F^{\mathrm{un}}) \rightarrow \RG(F^{\mathrm{un}})$ be the Frobenius action, arising from the given $F$-structure on $\RG$.

Let $\CB(\RG,F)$ be the Bruhat-Tits building of $G$, and let $\CA(\RT, F)=X_*(T) \otimes \BR$ be the apartment of $T$ in $\CB(\RG,F)$.  Let $\CB(\RG,F^{\mathrm{un}})$ be the Bruhat-Tits building of $\RG(F^{\mathrm{un}})$, and let $\CA(\RT, F^{\mathrm{un}})=X_*(\RT(F^{\mathrm{un}})) \otimes \BR$ be the apartment of $\RT(F^{\mathrm{un}})$ in $\CB(\RG,F^{\mathrm{un}})$. Then we identify
$$\CB(\RG,F)=\CB(\RG,F^{\mathrm{un}})^{Frob}, \text{ and } \CA(\RT, F)=\CA(\RT, F^{\mathrm{un}})^{Frob}.$$

Let $\Phi$ be the roots of $T$ in $G$. Let $\Psi=\{\phi + n | \phi \in \Phi, n \in \BZ\}$ be the affine roots of $T$ in $G$, where $(\phi+n)(\alpha)=\phi(\alpha)+n$, for $\alpha \in \CA(\RT, F)$. If $\psi=\phi + n \in \Psi$, then $\phi$ is called the \emph{gradient} of $\psi$ and denoted by $\dot{\psi}$.

Fix a pinning for $G$.  For any $x \in \CA(\RT, F)$, and any real number $r \geq 0$, let
\begin{align*}
\RT(F)_{r} & := \RT(1+\frak{p}_F^{\lceil r \rceil}),\\
\RT(F)_{r^+} & := \RT(1+\frak{p}_F^{\lfloor r \rfloor+1}),\\
\RG(F)_{x,r} & := \langle \RT(F)_r, X_{\alpha}(\frak{p}_F^{-\lfloor \alpha(x)-r \rfloor}) | \alpha \in \Phi \rangle,\\
\RG(F)_{x,r^+} & := \langle \RT(F)_{r^+}, X_{\alpha}(\frak{p}_F^{1-\lceil \alpha(x)-r \rceil}) | \alpha \in \Phi \rangle
\end{align*}
be the filtration groups defined by Moy and Prasad \cite{MP94}, where $X_{\alpha}$ is the root group homomorphism associated to $\alpha$.

By Theorem 5.2 of \cite{MP94}, given any irreducible admissible representation $(\pi, V)$ of $G$,
there is a nonnegative rational number $r=\rho(\pi)$ with the property that $r$ is the minimal
number such that $V^{\RG(F)_{x,r^+}}$ is nonzero for some $x \in \CB(\mathrm{G},F)$. This number $r=\rho(\pi)$ is called the \emph{depth} of $\pi$.

For any given point $x \in \CA(\mathrm{T}, F)$, let $r(x)$ be the smallest positive value in the set $\{ \psi(x) | \psi \in \Psi\}$. An irreducible representation $(\pi,V)$ of $G$ is called \emph{epipelagic} if it has a vector fixed under the subgroup $\RG(F)_{x, r(x)^+}$ and has depth $r(x)$, for some $x \in \CA(\mathrm{T}, F)$.

Set
\begin{align*}
J_x & := Z \cdot \RG(F)_{x,r(x)},\\
J_x^+ & := Z \cdot \RG(F)_{x,r(x)^+},\\
\mathrm{V}_x & := \mathrm{V}_{x,r(x)} : = \RG(F^{\mathrm{un}})_{x,r(x)} / \RG(F^{\mathrm{un}})_{x,r(x)^+},\\
\check{\mathrm{V}}_x & := \check{\mathrm{V}}_{x,r(x)} :=
\Hom_{\overline{k_F}}(\mathrm{V}_{x,r(x)}, \overline{k_F}),\\
\RG_x & := \RG(F^{\mathrm{un}})_{x,0} / \RG(F^{\mathrm{un}})_{x,0^+}.
\end{align*}
Note that from the definition of $r(x)$, $G_{x,0^+} = G_{x,r(x)}$. Moreover, $\mathrm{G}_x$ is a connected reductive group
over $\overline{k_F}$, and $\mathrm{V}_x$ is a finite-dimensional vector space over $\overline{k_F}$, hence is abelian.
The conjugation action of $\RG(F^{\mathrm{un}})_{x,0}$ on $\RG(F^{\mathrm{un}})_{x,r(x)}$ induces an algebraic representation $N : \mathrm{G}_x \rightarrow \GL({\mathrm{V}}_x)$ of $\mathrm{G}_x$ on ${\mathrm{V}}_x$, and we denote by $\check{\mathrm{V}}_x$ its the dual representation.  We also more generally define $\mathrm{V}_{x,r} = \RG(F^{\mathrm{un}})_{x,r} / \RG(F^{\mathrm{un}})_{x,r^+}$ and $\check{\mathrm{V}}_{x,r} = \Hom_{\overline{k_F}}(\mathrm{V}_{x,r}, \overline{k_F})$.

\begin{defn}
A functional $\lambda \in \check{\mathrm{V}}_{x,r}$ is called \emph{semistable} if its orbit under $\mathrm{G}_x$ does not contain zero in its closure, under the Zariski topology on $\check{\mathrm{V}}_{x,r}$.
\end{defn}

Fix a nontrivial additive character $\chi$ of $k_F$.
Let $\lambda \in \check{\mathrm{V}}_{x,r}(k_F)$, and set $\chi_{\lambda} := \chi \circ \lambda$. Then
$\chi_{\lambda}$ is a character of $\RG(F)_{x,r}$ which is trivial on $\RG(F)_{x,r^+}$.

\begin{rmk}\label{moyprasadsemistable}
In \cite{MP94}, Moy and Prasad proved the result that for every irreducible admissible positive-depth representation $\pi$ of $G$, there is a pair $(x,r) \in \CA(\RT, F) \times \mathbb{R}_{> 0}$ such that $\pi$ contains a character $\chi_{\lambda}$ of $\RG(F)_{x,r} / \RG(F)_{x,r^+}$ such that $\lambda \in \check{\mathrm{V}}_{x,r}(k_F)$ is semistable.
\end{rmk}

\begin{defn}\label{def3}
A functional $\lambda \in \check{\mathrm{V}}_x$ is called \emph{stable} if the following two conditions hold:

(1) the orbit $\mathrm{G}_x \cdot \lambda$ is Zariski-closed in $\check{\mathrm{V}}_x$,

(2) the stabilizer of $\lambda$ in $\mathrm{G}_x$ is finite, modulo $\mathrm{ker}(N)$.
\end{defn}

Let $\RG(F^{\mathrm{un}})_x := \{ g\in \RG(F^{\mathrm{un}}) | g \cdot x = x \}$. Then $\RG(F^{\mathrm{un}})_x$ contains
$\RG(F^{\mathrm{un}})_{x,0}$ with finite index. The action of $\RG(F^{\mathrm{un}})_{x,0}$ on
$\mathrm{V}_x$ and $\check{\mathrm{V}}_x$ extend to $\RG(F^{\mathrm{un}})_x$ which preserves the set of $\RG_x$-stable functionals in $\check{\mathrm{V}}_x$.  Note that since $\RG = \GL_n$, $\RG(F^{\mathrm{un}})_x = \RG(F^{\mathrm{un}})_{x,0}$.

Let $H_x=Z \cdot \RG(F)_x$.
Let $\lambda \in \check{\mathrm{V}}_x(k_F)$ be a $k_F$-rational stable functional with stabilizer $H_{x,\lambda}$ in $H_x$.  Then $H_{x,\lambda}=Z \cdot Stab_{\RG(F)_x}(\lambda)$.

We will show in Theorem \ref{main3} that if $x$ is a nonbarycenter point of the building, then $\check{\mathrm{V}}_{x,r(x)}$ contains no semistable functionals.  Therefore, in order to understand the epipelagic supercuspidal representation theory of $\GL_n$, it suffices to consider only barycenters.

Suppose that $x$ is a barycenter of a facet.  Let $\RZ_x$ be the diagonal embedding of $\ol{k_F}^{\times}$ in $\mathrm{G}_x$.  If the facet is a proper facet, we will show in Theorem \ref{main2} that $\mathrm{ker}(N) = \RZ_x$. If the facet is an alcove, it is easy to see that $\mathrm{ker}(N) = \RZ_x$.

Fix a character $\omega$ of $Z$ which is trivial on $\RZ(1+\frak{p}) := Z \cap \mathrm{T}(F)_1$. Note that $Z \cap \RG(F)_{x,r(x)} = \RZ(1+\frak{p})$, and that $\chi_{\lambda}$ is trivial on $\RZ(1+\frak{p})$. Therefore, we can define a character $\chi_{\lambda}^{\omega}$ on $J_x$ as follows:
$$\chi_{\lambda}^{\omega}(zg)=\omega(z) \chi_{\lambda}(g),$$
for $z \in Z$ and $g \in \RG(F)_{x,r(x)}$.

Let
$$\pi_x(\lambda, \omega) := \mathrm{ind}_{J_x}^G \chi_{\lambda}^{\omega},$$
be the compactly induced representation, and set $$\CH_{x,\lambda}:=End_{H_{x,\lambda}}(\mathrm{ind}_{J_x}^{H_{x,\lambda}} \chi_{\lambda}^{\omega}).$$
Then as in Section 2.1 of \cite{RY13}, from Mackey theory for finite groups, the intertwining algebra $\CH_{x,\lambda}$ has dimension equal to $\lvert H_{x,\lambda} / J_x \rvert$, and there is a bijection, denoted by $\rho \mapsto \chi^{\omega}_{\lambda, \rho}$, from the set of irreducible $\CH_{x,\lambda}$-modules to the set $\mathrm{Irr}(\CH_{x,\lambda})$ of irreducible constituents of $\mathrm{ind}_{J_x}^{H_{x,\lambda}} \chi_{\lambda}^{\omega}$ such that
\begin{equation}\label{chirho}
\mathrm{ind}_{J_x}^{H_{x,\lambda}} \chi_{\lambda}^{\omega} = \bigoplus_{\rho \in \mathrm{Irr}(\CH_{x,\lambda})} (\dim \rho) \chi^{\omega}_{\lambda, \rho}.
\end{equation}

A point $x \in \CA(\RT, F^{\mathrm{un}})$ is called \emph{rational} if $\psi(x) \in \BQ$ for any $\psi \in \Psi$.
Given two rational points $x,y \in \CA(\RT, F)$ and a rational number $r > 0$, let $\RV_{x,y,r}$ be the image of $\RG(F^{\mathrm{un}})_{y,r^+} \cap \RG(F^{\mathrm{un}})_{x,r}$ in $\RV_{x,r}$.
Let $\Psi_{x,r}=\{\psi \in \Psi|\psi(x)=r \text{ or } r-1\}$.
Then from the definition of $\RV_{x,r}$, it is easy to see that
the nonzero weights of $\RT$ in $\RV_{x,r}$ are $\{\dot{\psi}|\psi\in \Psi_{x,r}\}$. Moreover, $\RV_{x,y,r}$ is a $\RT$-stable subspace
of $\RV_{x,r}$ with weight decomposition
$$\RV_{x,y,r}=\bigoplus_{\psi \in \Psi_{x,r}, \psi(y)>0} \RV_{x,r}(\dot{\psi}).$$

\begin{lem}\label{lem2.3RY13}
Let $x$ be a barycenter of a facet.  Let $\lambda \in \check{\RV}_{x,r}$ be a stable functional. If $\lambda$ vanishes identically on $\RV_{x,y,r}$, then $x-y \in X_*(Z) \otimes \mathbb{R}$.
\end{lem}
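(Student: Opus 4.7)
The approach is to apply the Hilbert-Mumford criterion for stability. Suppose for contradiction that $v := y - x \notin X_*(\RZ) \otimes \mathbb{R}$.

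First, I would translate the hypothesis into a support condition on $\lambda$. Unpacking $\RV_{x,y,r}$ root by root via the Moy-Prasad filtrations, one shows that $\RV_{x,y,r}$ coincides with the sum of $\RT$-weight spaces in $\RV_{x,r}$ indexed by those $\psi \in \Psi_{x,r}$ satisfying $\psi(y) > 0$ --- equivalently, by those gradients $\dot\psi$ lying in the relevant ``positive'' half-space determined by $v$. Consequently the vanishing of $\lambda$ on $\RV_{x,y,r}$ forces the support of $\lambda$ in the dual weight decomposition of $\check\RV_{x,r}$ into the complementary ``non-positive'' half-space.

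Next, rescale $v$ (which lies in $X_*(\RT) \otimes \mathbb{Q}$ since $x$ and $y$ are rational) by a positive integer to obtain a genuine cocharacter $\mu \in X_*(\RT)$, which we view inside $X_*(\RG_x)$ via $\RT \hookrightarrow \RG_x$. Because $x$ is a barycenter of a facet, Theorem \ref{main2} identifies $\ker N$ with the central torus $\RZ_x$; combined with $v \notin X_*(\RZ) \otimes \mathbb{R}$, this implies that $\mu$ projects to a nontrivial $1$-parameter subgroup of $\RG_x / \ker N$. By the support analysis, every weight of $\mu$ on a nonzero weight-component of $\lambda$ is non-negative, so the limit $\lambda_0 := \lim_{t \to 0} \mu(t) \cdot \lambda$ exists in $\check\RV_{x,r}$ and equals the $\mu$-weight-zero projection of $\lambda$.

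Finally, distinguish two cases. If $\lambda_0 = \lambda$, then $\mu$ fixes $\lambda$, so $\mathrm{Stab}_{\RG_x}(\lambda)$ contains the 1-parameter subgroup $\mu$ and is infinite modulo $\ker N$, contradicting condition (2) of Definition \ref{def3}. If $\lambda_0 \neq \lambda$, then the orbit $\RG_x \cdot \lambda$ cannot be closed: were $\lambda_0 = g \cdot \lambda$ for some $g \in \RG_x$, the conjugate $g^{-1}\mu g$ would fix $\lambda$ and therefore, by finiteness of $\mathrm{Stab}(\lambda)$ modulo $\ker N$, lie in $\ker N$; normality of $\ker N = \RZ_x$ would then force $\mu \in \ker N$, contradicting our assumption. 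Hence the orbit fails to be closed, contradicting condition (1) of stability. The main technical obstacle is the first step: the root-by-root Moy-Prasad bookkeeping needed to pin down precisely the support condition, so that one can assert non-negativity of the $\mu$-weights on $\lambda$. With that established, the remaining Hilbert-Mumford analysis is routine, and the barycenter hypothesis enters only through the identification $\ker N = \RZ_x$ furnished by Theorem \ref{main2}.
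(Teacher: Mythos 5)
Your proof is correct and is essentially the argument the paper intends: the paper's own proof is a one-line citation of \cite[Lemma 2.3]{RY13}, whose argument is exactly this Hilbert--Mumford destabilization by a cocharacter built from $y-x$, the ``simple modification'' for $\GL_n$ being precisely your reduction modulo $X_*(\RZ)\otimes\BR$ via the identification $\ker(N)=\RZ_x$. The only (harmless) difference is that in the Reeder--Yu version the $\mu$-weights on the support of $\lambda$ are bounded below by a positive multiple of $r$, so the limit is $0$ and closedness of the orbit is contradicted at once, making the two-case analysis at the end of your argument unnecessary.
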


\begin{proof}
The proof of this lemma is almost verbatim of the proof \cite[Lemma 2.3]{RY13}, but with a simple modification.
\end{proof}

Following the idea of Reeder and Yu \cite{RY13}, we have the following analogous result for epipelagic supercuspidal representations of $\GL_n(F)$.

\begin{thm}\label{thmRY13}
Let $x \in \CA(\mathrm{T}, F)$ be a barycenter of a facet. Assume that $\lambda \in \check{\mathrm{V}}_x$ is a $k_F$-rational stable functional. Then $\pi_x(\lambda, \omega)$ has a finite direct sum decomposition
$$\pi_x(\lambda, \omega) = \bigoplus_{\rho \in \mathrm{Irr}(\CH_{x,\lambda})} (\dim \rho) \cdot \pi_x(\lambda,\omega, \rho),$$
where $\pi_x(\lambda, \omega,\rho) := \mathrm{ind}_{H_{x,\lambda}}^{G} \chi_{\lambda, \rho}^{\omega}$ is an irreducible supercuspidal representation of $\GL_n(F)$ with central character $\omega$ compactly induced from $\chi_{\lambda, \rho}^{\omega}$ which is as in \eqref{chirho}, for each $\rho$.
Moreover, if $\rho \ncong \rho'$, then $\pi_x(\lambda, \omega,\rho)
\ncong \pi_x(\lambda, \omega,\rho')$.  These $\pi_x(\lambda, \omega,\rho)$'s are epipelagic supercuspidal representations of $\GL_n(F)$.
\end{thm}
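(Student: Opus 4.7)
The plan is to follow the strategy of \cite[Proposition 2.4]{RY13}, adapted to $\GL_n(F)$. The crux is the computation of the $G$-intertwining of the character $\chi_{\lambda}^{\omega}$ on $J_x$, which I would show equals $H_{x,\lambda}$. Once this is in hand, both the irreducibility of each $\pi_x(\lambda, \omega, \rho)$ and the claimed direct sum decomposition follow from Mackey theory, and supercuspidality comes for free from compact induction from a subgroup that is open in $G$ and compact modulo $Z$.

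\emph{Step 1: computing the intertwining.} Suppose $g \in G$ intertwines $\chi_{\lambda}^{\omega}$, and set $y = g \cdot x \in \CB(\RG, F)$. Restricting the intertwining relation to the intersection $\RG(F)_{x, r(x)} \cap g\,\RG(F)_{x, r(x)}\,g^{-1}$ and passing to the Moy--Prasad graded piece $\RV_{x, y, r(x)}$ (whose weights are the gradients of the affine roots $\psi$ with $\psi(x) = r(x)$ and $\psi(y) > 0$), one sees that $\lambda$ must vanish identically on $\RV_{x, y, r(x)}$. Lemma \ref{lem2.3RY13} then forces $y - x \in X_*(Z) \otimes \BR$, so $g$ fixes $x$ modulo the center. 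Because $\RG(F)_x = \RG(F)_{x,0}$ for $\GL_n$, this gives $g \in Z \cdot \RG(F)_x = H_x$. Finally, the remaining intertwining condition says that $g$ stabilizes $\lambda$ modulo $\ker(N) = \RZ_x$, and stability of $\lambda$ (Definition \ref{def3}) then identifies its stabilizer in $H_x$ with $H_{x,\lambda}$, so $g \in H_{x,\lambda}$.

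\emph{Step 2: irreducibility, decomposition, and supercuspidality.} Since the $G$-intertwining of $\chi_{\lambda}^{\omega}$ equals $H_{x,\lambda}$, the $G$-intertwining of each extension $\chi_{\lambda,\rho}^{\omega}$ of $\chi_{\lambda}^{\omega}$ to $H_{x,\lambda}$ is also contained in $H_{x,\lambda}$. By the standard Mackey irreducibility criterion for compact induction from an open subgroup that is compact modulo the center, each $\pi_x(\lambda, \omega, \rho) = \mathrm{ind}_{H_{x,\lambda}}^G \chi_{\lambda,\rho}^{\omega}$ is irreducible, and distinct $\rho \ne \rho'$ yield inequivalent representations because $\chi_{\lambda,\rho}^{\omega} \not\cong \chi_{\lambda,\rho'}^{\omega}$ already as $H_{x,\lambda}$-modules. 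Induction in stages applied to \eqref{chirho} then gives
\[
\pi_x(\lambda,\omega) \;=\; \mathrm{ind}_{H_{x,\lambda}}^G \mathrm{ind}_{J_x}^{H_{x,\lambda}} \chi_{\lambda}^{\omega} \;=\; \bigoplus_{\rho \in \mathrm{Irr}(\CH_{x,\lambda})} (\dim \rho) \cdot \pi_x(\lambda, \omega, \rho).
\]
Each summand is supercuspidal with central character $\omega$ because $H_{x,\lambda}$ is open and compact modulo $Z$ and $\chi_{\lambda,\rho}^{\omega}|_Z = \omega$.

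\emph{Step 3: epipelagicity.} By construction $\chi_{\lambda}^{\omega}$ is trivial on $\RG(F)_{x, r(x)^+}$, so each $\pi_x(\lambda, \omega, \rho)$ contains a nonzero vector fixed by $\RG(F)_{x, r(x)^+}$; conversely, stability forces $\lambda$ (and hence $\chi_{\lambda}^{\omega}$) to be nontrivial on $\RG(F)_{x, r(x)}$, so each summand has depth exactly $r(x)$, confirming that it is epipelagic. The main obstacle is clearly Step 1: one has to convert the character-intertwining relation on overlapping Moy--Prasad subgroups into the precise vanishing statement $\lambda|_{\RV_{x,y,r(x)}} = 0$, which requires a careful accounting of how $\chi_{\lambda}$ decomposes across the affine root subgroup pieces. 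Everything else amounts to bookkeeping in the presence of the central character $\omega$ and using $\RG(F)_x = \RG(F)_{x,0}$ to identify the point stabilizer with the parahoric.
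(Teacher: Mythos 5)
Your proposal is correct and follows essentially the same route as the paper: the paper's proof simply asserts that, armed with Lemma \ref{lem2.3RY13}, the argument of \cite[Proposition 2.4]{RY13} carries over, and your Step 1 (intertwining computation via $\RV_{x,y,r(x)}$ and Lemma \ref{lem2.3RY13}), Step 2 (Mackey theory and induction in stages), and Step 3 (depth/epipelagicity) are exactly the content of that argument, spelled out in more detail than the paper provides.
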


\begin{proof}
Using Lemma \ref{lem2.3RY13}, it is not difficult to see that the proof of \cite[Proposition 2.4]{RY13} goes through in our setting.
\end{proof}

\subsection{Stable functionals and barycenters of proper facets}\label{stablefunctionals}

In this section, we show that for $\mathrm{G} = \GL_n$, if $x$ is a barycenter in $\CB(\RG,F)$ such that $\check{\RV}_{x}$ has a stable functional, then $x$ is a barycenter of an alcove.  This implies that the epipelagic supercuspidal representations of $G$ obtained in Theorem \ref{thmRY13} cannot be constructed from barycenters of proper facets.

Recall that an {\it alcove} in $\CA(\mathrm{T}, F)$ is a connected component of the set of points in $\CA(\mathrm{T}, F)$ on which no affine root vanishes. Let $\psi_i=e_i-e_{i+1}$, for $i=1,\ldots, n-1$, and let $\psi_n=1-(e_1-e_n)$. These are the \emph{standard simple affine roots} of $\mathrm{G}$.  Then
the zero locus of $\{\psi_i : 1 \leq i \leq n \}$ bounds an alcove, which we denote by $C$.  Let $\overline{C}$ denote the closure of $C$.

First, we show the following theorem.

\begin{thm}\label{main2}
If $\overline{x}$ is a barycenter of a proper facet $\mathcal{F}$ of $\overline{C}$, then $\check{\RV}_{\overline{x}}$ has no stable functionals.
\end{thm}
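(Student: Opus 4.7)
The plan is to show that for every $\lambda \in \check{\RV}_{\overline{x}}$ the stabilizer $\Stab_{\RG_{\overline{x}}}(\lambda)$ has positive dimension modulo $\RZ_{\overline{x}}$, violating the finite-stabilizer condition (2) of Definition \ref{def3}. The proof has three stages: make $\RG_{\overline{x}}$ explicit, identify $\RV_{\overline{x}}$ as an explicit $\RG_{\overline{x}}$-module, and bound the stabilizer dimension using a cycle invariant.

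The barycenter $\overline{x}$ of a proper facet $\mathcal{F}_I$ corresponds to a nonempty proper subset $I \subsetneq \{1,\ldots,n\}$ of vanishing simple affine roots, and $\RG_{\overline{x}}$ is the Levi of $\GL_n(\overline{k_F})$ whose Dynkin diagram is the subgraph of the extended Dynkin $\widetilde{A}_{n-1}$ spanned by $I$. This subgraph is a disjoint union of type-$A$ paths, so $\RG_{\overline{x}} \cong \prod_{a=1}^{s} \GL_{n_a}(\overline{k_F})$ with $s = n-|I|$, $\sum n_a = n$, and $\max_a n_a \geq 2$ (since $s \leq n-1$); the center $Z(\RG_{\overline{x}}) = (\mathbb{G}_m)^s$ contains $\RZ_{\overline{x}}$ as the diagonal subtorus. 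Evaluating the barycenter condition $\psi_j(\overline{x}) = 1/s$ for $j \notin I$ on affine roots $e_i - e_j + m$ shows that for $s = 1$ (the vertex case) one has $\RV_{\overline{x}} \cong \mathfrak{gl}_n$ with the adjoint $\GL_n$-action, while for $s \geq 2$ only cyclically-adjacent blocks contribute to $\Psi_{\overline{x},r(\overline{x})}$, giving $\RV_{\overline{x}}$ the structure of a cyclic quiver representation $V_{B_1} \to V_{B_2} \to \cdots \to V_{B_s} \to V_{B_1}$. This computation also verifies $\ker N = \RZ_{\overline{x}}$, since the common kernel in $Z(\RG_{\overline{x}})$ of the appearing $T$-weights $e_i - e_j$ is precisely the diagonal.

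The heart of the proof is the stabilizer bound. For $s = 1$, identifying $\lambda$ with $X \in \mathfrak{gl}_n$ via the trace pairing, the classical fact that $\dim Z_{\GL_n}(X) \geq n$ for every $X$ yields $\dim \Stab_{\RG_{\overline{x}}}(\lambda)/\RZ_{\overline{x}} \geq n - 1 \geq 1$. For $s \geq 2$, writing $\lambda = (A_1,\ldots,A_s)$ with $A_a : V_{B_a} \to V_{B_{a+1}}$ and forming the cycle endomorphisms $M_a : V_{B_a} \to V_{B_a}$ by composing once around the quiver, the identity $A_a M_a = M_{a+1} A_a$ implies that the ring homomorphism $\overline{k_F}[t] \to \prod_a \End(V_{B_a})$, $p \mapsto (p(M_1),\ldots,p(M_s))$, lands inside the quiver-endomorphism algebra of $\lambda$; hence its invertible elements inject into $\Stab_{\RG_{\overline{x}}}(\lambda)$. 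As soon as some $M_a$ is non-scalar this image has dimension $\geq 2$ (spanned at minimum by the identity and $(M_1,\ldots,M_s)$), producing the required positive-dimensional quotient by $\RZ_{\overline{x}}$.

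The main obstacle is the degenerate subcase with $s \geq 2$ in which all $M_a$ are equal scalars, so that the polynomial-in-$M$ argument only recovers $\RZ_{\overline{x}}$. If the common scalar is nonzero, each $A_a$ must be invertible, forcing all $n_a$ equal to some $N \geq 2$, and the stabilizer then contains the ``simultaneous conjugation'' subgroup $\{(g,\, A_1 g A_1^{-1},\, A_2 A_1 g (A_2 A_1)^{-1},\, \ldots) : g \in \GL_N\}$ of dimension $N^2 \geq 4$. If the common scalar is zero, then some $A_a$ has a nontrivial kernel or cokernel; the relation $g_{a+1} A_a = A_a g_a$ then fails to determine $(g_a, g_{a+1})$ on that subspace and contributes additional directions to $\Stab_{\RG_{\overline{x}}}(\lambda)$ beyond $\RZ_{\overline{x}}$. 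In every case one obtains $\dim \Stab_{\RG_{\overline{x}}}(\lambda)/\RZ_{\overline{x}} \geq 1$, so no $\lambda$ is stable, completing the proof.
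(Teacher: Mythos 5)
Your overall strategy is genuinely different from the paper's: the paper only needs to rule out functionals whose orbit is already Zariski-closed, which it does by showing $\dim \RG_{\overline{x}} \geq \dim \RV_{\overline{x}}$ and, in the equal-dimension case, by trapping a closed orbit inside the determinant hypersurface $\det(W_1\cdots W_k)=\mathrm{const}$ while exhibiting points of that hypersurface with a different Jordan form of the cycle product; you instead aim to show that condition (2) of Definition \ref{def3} fails for \emph{every} $\lambda$. Your normal form for $(\RG_{\overline{x}},\RV_{\overline{x}})$ agrees with the paper's (which reaches it by splitting into the cases $\psi_n\in\Psi_{\mathcal F}$, $\psi_n\notin\Psi_{\mathcal F}$ and conjugating), and your $s=1$ case, your ``some $M_a$ non-scalar'' case, and your ``all $M_a=c\neq 0$'' case are correct and complete. (One small point: to get $\ker(N)=\RZ_{\overline{x}}$ you must show that an element acting trivially on all of $\RV_{\overline{x}}$ is already central in $\RG_{\overline{x}}$, not merely compute $\ker(N)\cap Z(\RG_{\overline{x}})$; the paper does this by an explicit matrix computation.)

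The genuine gap is the subcase $s\geq 2$ with all $M_a=0$. The assertion that a nontrivial kernel or cokernel of some $A_a$ ``contributes additional directions to $\Stab_{\RG_{\overline{x}}}(\lambda)$ beyond $\RZ_{\overline{x}}$'' is not a proof: the freedom left in $g_a$ on $\ker A_a$ by the relation $g_{a+1}A_a=A_ag_a$ can be destroyed by the relation attached to the incoming arrow $A_{a-1}$. Indeed, the same reasoning applied at the alcove barycenter ($s=n$, all $n_a=1$) to the functional with $A_1=1$, $A_2=0$ would ``produce'' extra stabilizer directions, yet there the stabilizer is exactly $\RZ_{\overline{x}}$ (that functional fails stability only through condition (1)); so any correct treatment of this subcase must invoke $\max_a n_a\geq 2$ in an essential way, and yours does not. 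The statement you need is true: $\Stab_{\RG_{\overline{x}}}(\lambda)$ is the unit group of the endomorphism algebra of the cyclic-quiver representation $\lambda$, and the bricks of the cyclically oriented $\wt{A}_{s-1}$ are exactly the nilpotent indecomposables of length $\leq s$ and the invertible indecomposables of dimension vector $(1,\dots,1)$, all of which have dimension vectors with entries $\leq 1$; hence $\max_a n_a\geq 2$ forces $\dim_{\ol{k_F}}\End(\lambda)\geq 2$. But establishing this requires either the classification of indecomposables of the cyclic quiver or a direct construction of a non-scalar endomorphism in the all-nilpotent case, neither of which your sketch supplies.
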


\begin{proof}
Fix a point $\overline{x} \in \overline{C}$, which is a barycenter of a fixed proper facet $\mathcal{F}$ of $\overline{C}$. First, we will show that the dimension of $\mathrm{G}_{\overline{x}}$ is greater than or equal to that of $\mathrm{V}_{\overline{x}}$, hence also to that of $\check{\mathrm{V}}_{\overline{x}}$. In the rest of the proof, we identify $\mathrm{V}_{\overline{x}}$ with
$\check{\mathrm{V}}_{\overline{x}}$ via the standard trace pairing.

Note that for any $\alpha \in \Phi$, $\lceil \alpha(\overline{x})-r \rceil - \lfloor \alpha(\overline{x})-r \rfloor = 1$ if and only if $\alpha(\overline{x})-r$ is not an integer.
Therefore,
\begin{eqnarray}
\mathrm{dim}(\mathrm{G}_{\overline{x}}) = \# \{\alpha \in \Phi : \alpha(\overline{x}) \in \mathbb{Z} \} + n, \label{dimgx} \\
\mathrm{dim}(\mathrm{V}_{\overline{x}}) = \# \{\alpha \in \Phi : \alpha(\overline{x}) - r(\overline{x}) \in \mathbb{Z} \}. \label{dimvx}
\end{eqnarray}

Write $\overline{x} =(\overline{x}_1, \overline{x}_2, \ldots, \overline{x}_n) \in \BR^n$.  Then $\psi_i(\overline{x})=\overline{x}_i-\overline{x}_{i+1}$ for $1 \leq i \leq n-1$,
and $\psi_n(\overline{x})=1-(\overline{x}_1-\overline{x}_n)$.

Assume that $\mathcal{F}$ is the zero locus of the following set of simple affine roots:
\begin{align}\label{PsiF}
\Psi_{\mathcal{F}} := \left\{\psi_j, t \psi_n : 1 + \sum_{l=1}^{i-1}m_l \leq j \leq m_i-1  + \sum_{l=1}^{i-1}m_l, i = 1, 2, ..., k \right\},
\end{align}
where $t \in \{0,1\}$, $k \in \{1, 2, ..., n \}$, $m_i \geq 1$ for all $i = 1,2,...,k$, and $\sum_{i=1}^k m_i=n$.  Any facet $\mathcal{F}$ can be expressed in this form.  We note that if $t=0$, then there exists $i \in \{1, 2, \ldots, k \}$ such that $m_i >1$.  We also note that if $t = 1$ and $k = 1$, then $\mathcal{F} = \emptyset$.

We need to consider the cases $t = 0$ and $t = 1$ separately.

Case (1): Suppose that $t=0$.  Note that for $\overline{x}=(\overline{x}_1, \overline{x}_2, \ldots, \overline{x}_n)$, we always have
$\sum_{i=1}^n \psi_i(\overline{x})=1$.
Since $\overline{x}$ is a barycenter determined by $k$ simple affine roots, we have $r(\overline{x})=1/k$.

By solving the following equations:
\begin{align*}
& \psi(\overline{x})=0,  \text{ for } \ \psi \in \Psi_{\mathcal{F}} \\
& \psi(\overline{x})=1/k, \text{ for } \ \psi \notin \Psi_{\mathcal{F}},
\end{align*}
we can see that we may set
\begin{equation}\label{barycenter1}
\overline{x} = (\overline{x}_1, \ldots, \overline{x}_{k}),
\end{equation}
where for $1 \leq i \leq k$,
\begin{align*}
\overline{x}_i & = \left(a-\frac{i}{k}, a-\frac{i}{k}, \ldots, a-\frac{i}{k} \right) \in \BR^{m_i}
\end{align*}
for some $a \in \mathbb{R}$.

By \eqref{dimgx}, one can see that $\dim(\mathrm{G}_{\overline{x}})=\sum_{i=1}^k m_i^2$.
Explicitly,
\begin{equation*}
\mathrm{G}_{\overline{x}} \cong \prod_{i=1}^k \mathrm{GL}_{m_i}(\ol{k_F}) \cong \{\diag(g_1, g_2, \ldots, g_k) : g_i \in \mathrm{GL}_{m_i}(\ol{k_F}), 1 \leq i \leq k\}.
\end{equation*}

By \eqref{dimvx}, one can see that $\dim(\mathrm{V}_{\overline{x}}) = \left(\sum_{i=1}^{k-1} m_im_{i+1} \right) + m_1m_k$.
Explicitly,
\begin{align*}
\begin{split}
\mathrm{V}_{\overline{x}} \cong \ & \{v(\overline{X}_1, \overline{X}_2, \ldots, \overline{X}_k) : \overline{X}_i \in \mathrm{M}_{m_i \times m_{i+1}}(\ol{k_F}), \\
& \ 1 \leq i \leq k-1, \overline{X}_k \in \mathrm{M}_{m_k \times m_1}(\ol{k_F}) \},
\end{split}
\end{align*}
where
\[
v(\overline{X}_1, \overline{X}_2, \ldots \overline{X}_k) := \begin{pmatrix}
0_{m_1} & \overline{X}_1 & 0 & \cdots & 0\\
0 & 0_{m_2} & \overline{X}_2 & \cdots & 0 \\
0 & 0 & 0_{m_3} & \ddots & 0\\
0 & 0 & 0 & \ddots & \overline{X}_{k-1}\\
\overline{X}_k & 0 & 0 & 0 & 0_{m_k}
\end{pmatrix},
\]
where $0_r$ denotes the zero $r \times r$ matrix.  We note that for any $g = \diag(g_1, g_2, \ldots, g_k) \in \mathrm{G}_{\overline{x}}$
and $v \in \mathrm{V}_{\overline{x}}$, $\mathrm{G}_{\overline{x}}$ acts on $\mathrm{V}_{\overline{x}}$ by
$$g \cdot v(\overline{X}_1, \overline{X}_2, \ldots, \overline{X}_k)=v(g_1\overline{X}_1g_2^{-1}, g_2\overline{X}_2g_3^{-1}, \ldots, g_k\overline{X}_kg_1^{-1}).$$

Since
\begin{align*}
& \sum_{i=1}^k m_i^2 - \left[ \left(\sum_{i=1}^{k-1} m_im_{i+1} \right) + m_1m_k \right]\\
= \ & \frac{1}{2}\left[\left(\sum_{i=1}^{k-1}(m_i-m_{i+1})^2 \right) + (m_1-m_k)^2\right],
\end{align*}
it is clear that
$\dim(\mathrm{G}_{\overline{x}}) \geq \dim(\mathrm{V}_{\overline{x}})$.
Moreover, $\dim(\mathrm{G}_{\overline{x}}) = \dim(\mathrm{V}_{\overline{x}})$ if and only if $m_1 = m_2 = \cdots = m_k$.

Case (2): Suppose that $t=1$.  If $k=1$, then $\mathcal{F} = \emptyset$.

Suppose that $k > 1$. Then $r(\overline{x}) = 1/(k-1)$, and we can see that we may set
\begin{equation}\label{barycenter2}
\overline{x} = (\overline{x}_1, \ldots, \overline{x}_{k}),
\end{equation}
where for $1 \leq i \leq k$,
\begin{align*}
\overline{x}_i & = \left(a-\frac{i}{k-1}, a-\frac{i}{k-1}, \ldots, a-\frac{i}{k-1}\right) \in \BR^{m_i}.
\end{align*}
for some $a \in \mathbb{R}$.

By \eqref{dimgx}, one can see that $\dim(\mathrm{G}_{\overline{x}})=(m_1+m_k)^2 + \sum_{i=2}^{k-1} m_i^2.$
Explicitly,
\begin{align*}
\begin{split}
\mathrm{G}_{\overline{x}}  \cong \mathrm{GL}_{m_1+m_k}(\ol{k_F}) \times \prod_{i=2}^{k-1} \mathrm{GL}_{m_i}(\ol{k_F}) \cong \{\nu(g_1, g_2, \ldots, g_{k+2}) : \\
g_i \in \mathrm{GL}_{m_i}(\ol{k_F}), 1 \leq i \leq k, g_{k+1} \in \mathrm{M}_{m_1 \times m_k}(\ol{k_F}), g_{k+2} \in \mathrm{M}_{m_k \times m_1}(\ol{k_F})\},
\end{split}
\end{align*}
where
\[
\nu(g_1, g_2, \ldots, g_{k+2}) :=
\begin{pmatrix}
g_1 & 0 & 0 & \cdots & g_{k+1}\\
0 & g_2 & 0 & \cdots & 0 \\
0 & 0 & g_3 & \ddots & 0\\
0 & 0 & 0 & \ddots & 0\\
g_{k+2} & 0 & 0 & 0 & g_k
\end{pmatrix}.
\]
By \eqref{dimvx}, one can see that $\dim(\mathrm{V}_{\overline{x}}) = \left(\sum_{i=1}^{k-1} m_im_{i+1} \right) + m_{k-1}m_1+m_km_2.$  Explicitly,
\begin{align*}
\begin{split}
\mathrm{V}_{\overline{x}} \cong \ & \{v(\overline{X}_1, \overline{X}_2, \ldots, \overline{X}_{k+1}) : \overline{X}_i \in \mathrm{M}_{m_i \times m_{i+1}}(\ol{k_F}), \\
& \ 1 \leq i \leq k-1, \overline{X}_k \in \mathrm{M}_{m_{k-1} \times m_1}(\ol{k_F}), \overline{X}_{k+1} \in \mathrm{M}_{m_{k} \times m_2}(\ol{k_F})\},
\end{split}
\end{align*}
where
\[
v(\overline{X}_1, \overline{X}_2, \ldots, \overline{X}_{k+1}) := \begin{pmatrix}
0_{m_1} & \overline{X}_1 & 0 & \cdots & 0\\
0 & 0_{m_2} & \overline{X}_2 & \cdots & 0 \\
0 & 0 & 0_{m_3} & \ddots & 0\\
\overline{X}_k & 0 & 0 & \ddots & \overline{X}_{k-1}\\
0 & \overline{X}_{k+1} & 0 & 0 & 0_{m_k}
\end{pmatrix}.
\]

Let $w = \begin{pmatrix}
I_{m_1} & 0 & 0\\
0 & 0 & I_{m_k}\\
0 & I_{\sum_{i=2}^{k-1}m_i} & 0
\end{pmatrix}$, where $I_r$ denotes the $r \times r$ identity matrix.
Conjugating $\mathrm{G}_{\overline{x}}$ and $\mathrm{V}_{\overline{x}}$ by $w$, we get
\begin{align*}
\begin{split}\mathrm{G}_{\overline{x}} & \cong \mathrm{GL}_{m_1+m_k}(\ol{k_F}) \times \prod_{i=2}^{k-1} \mathrm{GL}_{m_i}(\ol{k_F}) \cong \{\diag(g_1, g_2, \ldots, g_{k-1}) : \\
& g_1 \in \mathrm{GL}_{m_1+m_k}(\ol{k_F}), g_i \in \mathrm{GL}_{m_i}(\ol{k_F}), 2 \leq i \leq k-1\},
\end{split}
\end{align*}
\begin{align*}
\begin{split}
\mathrm{V}_{\overline{x}} \cong \ & \{v(\overline{X}_1, \overline{X}_2, \ldots, \overline{X}_{k-1}) : \overline{X}_1 \in \mathrm{M}_{(m_1+m_k) \times m_2}(\ol{k_F}), \\
& \ \overline{X}_i \in \mathrm{M}_{m_i \times m_{i+1}}(\ol{k_F}), 2 \leq i \leq k-2, \overline{X}_{k-1} \in \mathrm{M}_{m_{k-1} \times (m_1+m_k)}(\ol{k_F})\},
\end{split}
\end{align*}
where
\[
v(\overline{X}_1, \overline{X}_2, \ldots \overline{X}_{k-1}) := \begin{pmatrix}
0_{m_1+m_k} & \overline{X}_1 & 0 & \cdots & 0\\
0 & 0_{m_2} & \overline{X}_2 & \cdots & 0 \\
0 & 0 & 0_{m_3} & \ddots & 0\\
0 & 0 & 0 & \ddots & \overline{X}_{k-2}\\
\overline{X}_{k-1} & 0 & 0 & 0 & 0_{m_{k-1}}
\end{pmatrix}.
\]
Moreover, for any $g\in \mathrm{G}_{\overline{x}}$ and $v \in \mathrm{V}_{\overline{x}}$ as above,
$$g\cdot v(\overline{X}_1, \overline{X}_2, \ldots, \overline{X}_{k-1})=v(g_1\overline{X}_1g_2^{-1}, g_2\overline{X}_2g_3^{-1}, \ldots, g_{k-1}\overline{X}_{k-1}g_1^{-1}).$$

Since
\begin{align*}
& \ \left[(m_1+m_k)^2 + \left(\sum_{i=2}^{k-1} m_{i}^2\right)\right]\\
& \ -\left[\left(\sum_{i=2}^{k-2} m_im_{i+1} \right)+ m_{k-1}(m_1+m_k)+(m_1+m_k)m_2)\right]\\
= \ & \left[(m_1+m_k)^2 + \sum_{i=2}^{k-1} m_{i}^2\right] - \left[\left(\sum_{i=1}^{k-1} m_im_{i+1} \right)+ m_{k-1}m_1+m_km_2\right]\\
= \ & \frac{1}{2}\left[\left(\sum_{i=2}^{k-2}(m_i-m_{i+1})^2 \right) + (m_1+m_k-m_2)^2 + (m_1+m_k-m_{k-1})^2\right],
\end{align*}
it is easy to see that $\dim(\mathrm{G}_{\overline{x}}) \geq \dim(\mathrm{V}_{\overline{x}})$. Moreover, $\dim(\mathrm{G}_{\overline{x}}) = \dim(\mathrm{V}_{\overline{x}})$ if and only if
$m_2=m_3=\cdots=m_{k-1}=m_1+m_k$.

The analysis in Case (2) implies that it is enough to consider Case (1).  In other words, we have reduced ourselves to the following situation:
\begin{equation*}
\mathrm{G}_{\overline{x}} \cong \prod_{i=1}^k \mathrm{GL}_{m_i}(\ol{k_F}) \cong \{\diag(g_1, g_2, \ldots, g_k) : g_i \in \mathrm{GL}_{m_i}(\ol{k_F}), 1 \leq i \leq k\},
\end{equation*}
\begin{align}\label{Vx1}
\begin{split}
\mathrm{V}_{\overline{x}} \cong \ & \{v(\overline{X}_1, \overline{X}_2, \ldots, \overline{X}_k) : \overline{X}_i \in \mathrm{M}_{m_i \times m_{i+1}}(\ol{k_F}), \\
& \ 1 \leq i \leq k-1, \overline{X}_k \in \mathrm{M}_{m_k \times m_1}(\ol{k_F})\},
\end{split}
\end{align}
with
$$g \cdot v(\overline{X}_1, \overline{X}_2, \ldots, \overline{X}_k)=v(g_1\overline{X}_1g_2^{-1}, g_2\overline{X}_2g_3^{-1}, \ldots, g_k\overline{X}_kg_1^{-1})$$
for any $g \in \mathrm{G}_{\overline{x}}$ and $v \in \mathrm{V}_{\overline{x}}$ as above.
Moreover, $\dim(\mathrm{G}_{\overline{x}}) \geq \dim(\mathrm{V}_{\overline{x}})$, and
$\dim(\mathrm{G}_{\overline{x}}) = \dim(\mathrm{V}_{\overline{x}})$ if and only if $m_1 = m_2 = \cdots = m_k$. We note that since $\overline{x}$ is a barycenter of a proper facet and since $t = 0$,
there exists $i \in \{1, 2, \ldots, k \}$ such that $m_i > 1$.

Let $\lambda \in \mathrm{V}_{\overline{x}}$, and suppose that $\mathrm{G}_{\overline{x}} \cdot \lambda = \mathrm{V}_{\overline{x}}$.  Since $0 \in \mathrm{V}_{\overline{x}}$, we get $\mathrm{V}_{\overline{x}} = 0$, a contradiction.  Therefore any Zariski-closed orbit in $\mathrm{V}_{\overline{x}}$
will be a proper subset of $\mathrm{V}_{\overline{x}}$, hence with dimension
less than or equal to $\dim(\mathrm{V}_{\overline{x}})-1$.

We will now prove that there are no stable functionals in $\mathrm{V}_{\overline{x}}$ for the action of $\mathrm{G}_{\overline{x}}$.  To do this, we will repeatedly make use of the following dimension formula relating orbits and stabilizers.  If $\lambda \in \mathrm{V}_{\overline{x}}$ and $\mathrm{Stab}_{\mathrm{G}_{\overline{x}}}(\lambda)$ denotes the stabilizer in $\mathrm{G}_{\overline{x}}$ of $\lambda$, then
$$\mathrm{dim}(\mathrm{G}_{\overline{x}}) = \mathrm{dim}(\mathrm{Stab}_{\mathrm{G}_{\overline{x}}}(\lambda)) + \mathrm{dim}(\mathrm{G}_{\overline{x}} \cdot \lambda).$$
The action of $\mathrm{G}_{\overline{x}}$ on $\mathrm{V}_{\overline{x}}$ gives rise to a representation
$$N : \mathrm{G}_{\overline{x}} \rightarrow \GL(\mathrm{V}_{\overline{x}})$$
We first need to show that $\mathrm{ker}(N) = \RZ_x$, the latter of which is clearly one-dimensional.  It is clear that $\RZ_x \subset \mathrm{ker}(N)$.  Let $g \in \mathrm{G}_{\overline{x}}$ such that $$g \cdot v(\overline{X}_1, \overline{X}_2, \ldots, \overline{X}_k)= v(\overline{X}_1, \overline{X}_2, \ldots, \overline{X}_k)$$ for every $\lambda = v(\overline{X}_1, \overline{X}_2, \ldots, \overline{X}_k) \in \mathrm{V}_{\overline{x}}$.
Then $$v(g_1\overline{X}_1g_2^{-1}, g_2\overline{X}_2g_3^{-1}, \ldots, g_k\overline{X}_kg_1^{-1}) = v(\overline{X}_1, \overline{X}_2, \ldots, \overline{X}_k)$$ for every $\lambda = v(\overline{X}_1, \overline{X}_2, \ldots, \overline{X}_k) \in \mathrm{V}_{\overline{x}}$.  We claim that if $h_1 \in \GL_{s_1}(F)$ and $h_2 \in \GL_{s_2}(F)$ such that $h_1 Y h_2^{-1} = Y$ for all $Y \in \mathrm{M}_{s_1 \times s_2}(F)$, then $h_1, h_2$ are both central, and moreover each have the same element along the diagonal.

Indeed, if $s_1 = s_2$, then varying $Y \in M_{s_1 \times s_2}$ gives the result.  Suppose without loss of generality that $s_2 > s_1$.  By choosing $Y$ to be each of the following matrices
\begin{align*}
& \begin{pmatrix}
I_{s_1} & 0 & 0 & \ldots & 0 & 0
\end{pmatrix},
\begin{pmatrix}
0 & I_{s_1} & 0 & \ldots & 0 & 0
\end{pmatrix},\\
&
\begin{pmatrix}
0 & 0 & I_{s_1} & \ldots & 0 & 0
\end{pmatrix},
\ldots,
\begin{pmatrix}
0 & 0 & 0 & \ldots & I_{s_1} & 0
\end{pmatrix},
& \mathrm{and} \
\begin{pmatrix}
0 & I_r\\
0 & 0
\end{pmatrix},
\end{align*}
where $r$ is the remainder when one divides $s_2$ by $s_1$, one can see that $h_2$ is block diagonal, with each block equal to $h_1$ except for possibly the last block. The last block, of size $r \times r$, is equal to the upper left $r \times r$ block of $h_1$.  Now that $h_2$ is in this form, when one varies $Y$ across all matrices of the form
\begin{align*}
& \begin{pmatrix}
Y' & 0 & 0 & \ldots & 0 & 0
\end{pmatrix},
\begin{pmatrix}
0 & Y' & 0 & \ldots & 0 & 0
\end{pmatrix},\\
& \begin{pmatrix}
0 & 0 & Y' & \ldots & 0 & 0
\end{pmatrix},
\ldots
\begin{pmatrix}
0 & 0 & 0 & \ldots & Y' & 0
\end{pmatrix}, \ \mathrm{and}
& \begin{pmatrix}
0 & Y''\\
0 & 0
\end{pmatrix},
\end{align*}
where $Y'' \in \mathrm{M}_{r \times r}(F)$, one sees that $h_2$ and $h_1$ are of the form claimed.

We have shown earlier that $\dim(\mathrm{G}_{\overline{x}}) \geq \dim(\mathrm{V}_{\overline{x}})$.  Suppose that $\dim(\mathrm{G}_{\overline{x}}) > \dim(\mathrm{V}_{\overline{x}})$. Suppose that $\lambda \in \mathrm{V}_{\overline{x}}$ such that the orbit $\mathrm{G}_{\overline{x}} \cdot \lambda$ is Zariski-closed. Since $\dim(\mathrm{G}_{\overline{x}} \cdot \lambda) \leq \dim(\mathrm{V}_{\overline{x}}) - 1$, we deduce that $\mathrm{dim}(\mathrm{Stab}_{\mathrm{G}_{\overline{x}}}(\lambda)) \geq 2$, hence $\mathrm{Stab}_{\mathrm{G}_{\overline{x}}}(\lambda)$ is not finite modulo $\mathrm{ker}(N)$, since $\mathrm{ker}(N) = \RZ_x$.
Therefore, by Definition \ref{def3}, if $\dim(\mathrm{G}_{\overline{x}}) > \dim(\mathrm{V}_{\overline{x}})$, there is no stable functional in $\mathrm{V}_{\overline{x}}$.

We now assume that $\dim(\mathrm{G}_{\overline{x}}) = \dim(\mathrm{V}_{\overline{x}})$, so that $m_1 = m_2 = \cdots = m_k  =:m > 1$.
Since $\mathrm{dim}(\mathrm{ker}(N)) = 1$, we just have to show that there is no Zariski-closed orbit in $\mathrm{V}_{\overline{x}}$ of dimension $\dim(\mathrm{V}_{\overline{x}})-1$.

Assume that $\lambda = v(\overline{X}_1, \overline{X}_2, \ldots, \overline{X}_k) \in \mathrm{V}_{\overline{x}}$ such that $\mathrm{G}_{\overline{x}} \cdot \lambda$ is Zariski-closed. Since for any $g:=\diag(g_1, g_2, \ldots, g_k) \in \mathrm{G}_{\overline{x}}$, we have that
$$g \cdot v(\overline{X}_1, \overline{X}_2, \ldots, \overline{X}_k)=v(g_1\overline{X}_1g_2^{-1}, g_2\overline{X}_2g_3^{-1}, \ldots, g_k\overline{X}_kg_1^{-1}),$$
we see that the orbit $\mathrm{G}_{\overline{x}} \cdot \lambda$ is contained in the following set:
\begin{align*}
\begin{split}
S_1 := & \{v(W_1, W_2, \ldots, W_k) \in \mathrm{V}_{\overline{x}} :
W_1 W_{2} \cdots W_k = g_1 \overline{X}_1 \overline{X}_{2} \cdots \overline{X}_k g_1^{-1}, \\
& \text{ for some } g_1 \in \GL_{m_1}(\ol{k_F})\}.
\end{split}
\end{align*}
It is easy to see that the set $S_1$ is included in the following closed hyperplane of $\mathrm{V}_{\overline{x}}$ of dimension $\dim(\mathrm{V}_{\overline{x}})-1$:
\begin{align*}
\begin{split}
S_2 := & \{v(W_1, W_2, \ldots, W_k) \in \mathrm{V}_{\overline{x}} \\
& | \det(W_1 W_{2} \cdots W_k) = \det(\overline{X}_1 \overline{X}_{2} \cdots \overline{X}_k)\}.
\end{split}
\end{align*}
Therefore, it suffices to show that $S_1 \neq S_2$.

Note that $\overline{X}_1 \overline{X}_{2} \cdots \overline{X}_k \in \mathrm{M}_{m \times m}(\ol{k_F})$.
For any $v(W_1, W_2, \ldots, W_k) \in S_1$,
$W_1 W_{2} \cdots W_k$ has the same Jordan normal form as
$\overline{X}_1 \overline{X}_{2} \cdots \overline{X}_k$. Since $m > 1$, it is easy to find a $v(W_1, W_2, \ldots, W_k)$ such that $W_1 W_{2} \cdots W_k$ has a different Jordan normal form than $\overline{X}_1 \overline{X}_{2} \cdots \overline{X}_k$ satisfying $$\det(W_1 W_{2} \cdots W_k) = \det(\overline{X}_1 \overline{X}_{2} \cdots \overline{X}_k).$$
This shows that $S_1 \neq S_2$, and hence $\dim(\mathrm{G}_{\overline{x}} \cdot \lambda) \leq \dim(\mathrm{V}_{\overline{x}})-2$, which proves the claim.

Therefore, if $\lambda = v(\overline{X}_1, \overline{X}_2, \ldots, \overline{X}_k) \in \mathrm{V}_{\overline{x}}$ such that $\mathrm{G}_{\overline{x}} \cdot \lambda$ is Zariski-closed, we can conclude that $\mathrm{dim}(\mathrm{Stab}_{\mathrm{G}_{\overline{x}}}(\lambda)) \geq 2$, hence $\mathrm{Stab}_{\mathrm{G}_{\overline{x}}}(\lambda)$ is not finite modulo $\mathrm{ker}(N)$.  This completes the proof of the theorem.
\end{proof}

\subsection{Semi-stable functionals and nonbarycenters}\label{nonbarycenters}

In this section, we consider the situation of a point in the building that is not a barycenter.  The following theorem implies, by Remark \ref{moyprasadsemistable} and Theorem \ref{main2}, that the epipelagic supercuspidal representations of $\GL_n(F)$ obtained in Theorem \ref{thmRY13} can only be constructed from barycenters of alcoves.

\begin{thm}\label{main3}
If $x$ is not a barycenter, then $\check{\RV}_{x}$ contains no semi-stable functionals.
\end{thm}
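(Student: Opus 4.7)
The plan is to apply the Hilbert--Mumford numerical criterion to the $\mathrm{G}_x$-module $\check{\mathrm{V}}_x$: for every $\lambda \in \check{\mathrm{V}}_x$ I will exhibit a cocharacter $\mu : \mathbb{G}_m \to \mathrm{G}_x$ such that $\lim_{t \to 0} \mu(t) \cdot \lambda = 0$. This places $0$ in the Zariski closure of every $\mathrm{G}_x$-orbit in $\check{\mathrm{V}}_x$ and so rules out semi-stable functionals.

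After conjugating by the affine Weyl group I may assume the facet $\mathcal{F}$ containing $x$ lies in $\overline{C}$, so that $\Psi_{\mathcal{F}}$ has the shape \eqref{PsiF}. Write the coordinates of $x$ in block form $(\overline{x}_1, \ldots, \overline{x}_k)$, where $\overline{x}_i$ is constant of length $m_i$ with value $a_i$ and $a_1 > \cdots > a_k$. Introduce the cyclic gap sequence
\[
g_i := a_i - a_{i+1} \quad (1 \leq i \leq k-1), \qquad g_k := 1 - (a_1 - a_k),
\]
so that $\sum_i g_i = 1$, each $g_i \geq 0$, and $g_i = 0$ exactly when the corresponding simple affine root lies in $\Psi_{\mathcal{F}}$ (in particular $g_k = 0$ precisely when $t = 1$). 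Since $\mathrm{G}_x$ depends only on $\mathcal{F}$ and not on the position of $x$ within it, it has the explicit product-of-$\GL$'s description from the proof of Theorem~\ref{main2}.

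Every positive value in $\{\phi(x) + n : \phi \in \Phi, n \in \mathbb{Z}\}$ is a cyclic partial sum of the $g_i$'s, hence $r(x) = \min\{g_i : g_i > 0\}$ and this minimum is attained only by single-positive-gap contributions (possibly padded by the zero gap in the $t = 1$ case). Setting
\[
I := \{\, i : g_i = r(x)\,\},
\]
a weight computation shows that, in the $t = 0$ case---and identically in the $t = 1$ case after the conjugation $w$ of Theorem~\ref{main2}---
\[
\mathrm{V}_x \cong \bigoplus_{i \in I} \mathrm{M}_{m_i \times m_{i+1 \bmod k}}(\overline{k_F}),
\]
on which $(h_1, \ldots, h_k) \in \prod_j \GL_{m_j}(\overline{k_F}) \subseteq \mathrm{G}_x$ acts by $(X_i)_{i \in I} \mapsto (h_i X_i h_{i+1 \bmod k}^{-1})_{i \in I}$.

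The hypothesis that $x$ is not a barycenter is equivalent to the non-vanishing gaps not all being equal, so $I$ is a \emph{proper} subset of the cyclic index set. Fix $j$ with $g_j \neq r(x)$ (in the $t = 1$ case choose $j \in \{1,\ldots,k-1\}$) and define the cocharacter
\[
\mu(t) := \diag\bigl(t^{c_1} I_{m_1}, \ldots, t^{c_k} I_{m_k}\bigr),
\]
where $c_{(j+s) \bmod k} := s$ for $s = 1, \ldots, k$. This ``cuts the cycle at $j$'' so that $c_{(i+1) \bmod k} - c_i = 1$ for every $i \neq j$, in particular for every $i \in I$; consequently $\mu(t)$ acts on the $i$-th summand of $\check{\mathrm{V}}_x$ by $t^{c_{(i+1) \bmod k} - c_i} = t$, forcing $\mu(t) \cdot \lambda \to 0$ as $t \to 0$ for every $\lambda \in \check{\mathrm{V}}_x$. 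Hilbert--Mumford then completes the proof. The main obstacle I anticipate is the weight computation for $\mathrm{V}_x$ at a non-barycenter---verifying that only the single-gap indices $i \in I$ survive---together with the uniform treatment of the $t = 1$ case via the conjugation $w$; both reduce to careful bookkeeping with the Moy--Prasad filtration and the wraparound affine root $\psi_n$.
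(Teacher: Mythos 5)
Your proof is correct and follows essentially the same route as the paper: identify $\mathrm{G}_x$ and $\mathrm{V}_x$ in block form (reducing the $t=1$ case to the $t=0$ case by the conjugation from Theorem \ref{main2}), observe that the non-barycenter hypothesis forces at least one cyclic block of $\mathrm{V}_{\overline{x}}$ to be absent from $\mathrm{V}_x$, and then exhibit a cocharacter whose weights increase strictly around the cycle except at the missing block, so that it contracts all of $\check{\mathrm{V}}_x$ to zero. Your description of the surviving blocks via the set $I$ of minimal positive gaps is a slightly sharper version of the paper's statement that some $X_j = 0$, but the destabilizing one-parameter subgroup is the same.
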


\begin{proof}
Let $x$ be a nonbarycenter point contained in a facet $\mathcal{F}$.  We will show that $\mathrm{V}_{x} \subset \mathrm{V}_{\overline{x}}$, where $\ol{x}$ is a barycenter of $\CF$.  Assume that $\mathcal{F}$ is determined by the zero loci of $\Psi_{\mathcal{F}}$ as in \eqref{PsiF}.  Here, we are allowing $\mathcal{F}$ to be an alcove, so $\Psi_{\mathcal{F}}$ can be empty.  Recall that $\lambda \in \check{\RV}_{x}$ is \emph{semi-stable} if $\overline{\mathrm{G}_x \cdot \lambda}$ does not contain zero.  By the proof of Theorem \ref{main2}, it is enough to consider the case $t = 0$.
As in the proof of Theorem \ref{main2}, we identify $\mathrm{V}_{\overline{x}}$ with
$\check{\mathrm{V}}_{\overline{x}}$ via the standard trace pairing.

As in the proof of Theorem \ref{main2}, $x$ has the form $$(x_1, x_2, \ldots, x_k),$$
where $x_i = (s_i, s_i, \ldots, s_i) \in \mathbb{R}^{m_i}$.  Note that we have
$\sum_{i=1}^n \psi_i(x)=1$.
Also note that since $x$ is not a barycenter of $\CF$, $r(x) < \frac{1}{k}$.

Recall that a barycenter $\ol{x}$ of $\CF$ has the form as in \eqref{barycenter1}.
It is easy to see that $\RG_x = \RG_{\overline{x}}$.
We claim that $\RV_x \subseteq \RV_{\ol{x}}$ (see \eqref{Vx1}).

Since we have fixed a set of simple affine roots $\psi_1, \psi_2, \ldots, \psi_n$, we have in turn fixed an alcove $C$.  In particular,
$$C = \{ y \in \mathcal{A}(\mathrm{T},F) : 0 < \psi_i(y) < 1 \ \forall i = 1,2, \ldots, n \}.$$
Since $x \in \mathcal{F} \subset \overline{C}$, $s_1 \geq s_2 \geq \dots \geq s_k$.  By definition of $r(x)$, we have that $s_i - s_{i+1} \geq r(x)$ for $i = 1,2,...,k-1$.  Since $0 < \psi_n(x) < 1$, we get that $0 < s_1 - s_k < 1$.  Since $e_1 - e_n$ is the highest root, we conclude that the only positive roots that can contribute to $\mathrm{V}_{x}$ already contribute to $\mathrm{V}_{\ol{x}}$.  We now consider negative roots.  Since $e_1 - e_n$ is the highest root, the minimum value that a negative root can take on $x$ is $s_k - s_1$.  The negative roots that take the value $s_k - s_1$ on $x$ are precisely those that come from $\overline{X}_k$ in \eqref{Vx1}.  Since $\psi_n(x) \geq r(x)$, we have $e_n-e_1 \geq -1 + r(x)$.  In particular, all negative roots coming from $\overline{X}_k$ in \eqref{Vx1} have value greater than or equal to $-1 + r(x)$ on $x$, with equality if and only if $\psi_n(x) = r(x)$.
In particular, we have concluded that $\mathrm{V}_x \subseteq \mathrm{V}_{\overline{x}}$.  One can moreover see from the above discussion that since $x$ is not a barycenter, $\mathrm{V}_x \varsubsetneq \mathrm{V}_{\overline{x}}$.

We now consider the issue of semi-stability.  If $x$ is not a barycenter, then
\begin{equation}\label{Gx1}
\mathrm{G}_x = \mathrm{G}_{\overline{x}} \cong \prod_{i=1}^k \mathrm{GL}_{m_i}(\ol{k_F}) \cong \{\diag(g_1, g_2, \ldots, g_k) : g_i \in \mathrm{GL}_{m_i}, 1 \leq i \leq k\},
\end{equation}
and one can see that there exists $j$, with $1 \leq j \leq k$, such that
\begin{align}\label{Vx3}
\begin{split}
\mathrm{V}_{x} \subseteq \ & \{v(X_1, X_2, \ldots, X_k) : X_i \in \mathrm{M}_{m_i \times m_{i+1}}(\ol{k_F}), \\
& \ 1 \leq i \leq k-1, X_k \in \mathrm{M}_{m_k \times m_1}(\ol{k_F}), X_j = 0 \},
\end{split}
\end{align}
with
$$g \cdot v(X_1, X_2, \ldots, X_k)=v(g_1X_1g_2^{-1}, g_2X_2g_3^{-1}, \ldots, g_kX_kg_1^{-1})$$
for any $g \in \mathrm{G}_{x}$ and $v \in \mathrm{V}_{x}$ as above.

We wish to show that $\overline{\mathrm{G}_x \cdot \lambda}$ contains zero.  Let $\RT_x$ be the split maximal torus of $\mathrm{G}_x$.  Let $\chi : \mathbb{G}_m \rightarrow \RT_x$ be the cocharacter given by $\chi(t) = (t_1 I_{m_1}, t_2 I_{m_2}, \ldots, t_k I_k)$, where $t_i \in \ol{k_F}^{\times}$, $t_i = t^{b_i}$, and $b_i \in \mathbb{Z}$.  Then
\begin{align*}
\ & \chi(t) \cdot v(X_1, X_2, \ldots, X_{k-1}, X_k) \\
= \ & v(t^{b_1 - b_2} X_1, t^{b_2 - b_3} X_2, \ldots, t^{b_{k-1} - b_k} X_{k-1}, t^{b_k-b_1} X_k).
\end{align*}
If $1 \leq j \leq k-1$, let  $(b_1, b_2, \ldots, b_k)$ be any sequence of integers that satisfy $b_{j+1} \geq b_{j+2} \geq \ldots \geq b_k \geq b_1 \geq b_2 \geq \ldots \geq b_{j}$.  If $j = k$, let $(b_1, b_2, \ldots, b_k)$ be any sequence of integers that satisfy $b_1 \geq b_2 \geq \ldots b_{k-1} \geq b_k$.  Then, since $X_j = 0$, one can compute that
\[ \lim_{t \to 0} [\chi(t) \cdot v(X_1, X_2, \ldots, X_{k-1}, X_k)] = 0,\]
proving that $\overline{\mathrm{G}_x \cdot \lambda}$ contains zero.  Thus, $\lambda$ is not semistable.
\end{proof}

\begin{rmk}\label{epipelagicisnotsimple} $ $
\begin{enumerate}
\item By comparing the construction of epipelagic representations with that of simple supercuspidal representation in \S\ref{prelimsimplesupercuspidal}, one can see that a barycenter of an alcove $C$ is exactly a point which produces simple supercuspidal representations.  Therefore, by Theorem \ref{main2} and Theorem \ref{main3}, we conclude that the irreducible epipelagic supercuspidal representations of $\GL_n(F)$ constructed in \S\ref{constructionepipelagic} are all simple.
\item The epipelagic supercuspidal representations constructed in Theorem \ref{thmRY13} do not exhaust all epipelagic supercuspidal representations of $\GL_n(F)$.  For example, in $\GL_4(F)$, one could take $x$ to be a point corresponding to the barycenter of a facet whose reductive quotient is $\GL_2 \times \GL_2$. Then $r(x)=1/2$ and there are supercuspidals of $\GL_4(F)$ of depth $1/2$ constructed from $x$, as pointed out to us by Shaun Stevens.
\item By considering the case of $\SL_n(F)$, we can now conclude that the construction of Reeder and Yu is not necessarily exhaustive.
\end{enumerate}
\end{rmk}

\section{Jacquet's Conjecture on the local converse problem for simple supercuspidal representations of $\GL_n(F)$}\label{jacquetsection}

In this section, we prove a new case of Jacquet's conjecture on the local converse problem for $\GL_n(F)$.  In \S\ref{jacquetpreliminaries}, we recall some basic theory on Jacquet's conjecture, as well as a new strategy developed by Jiang, Nien, and Stevens, on proving the conjecture.  In \S\ref{specialpairsimple}, we prove Jacquet's conjecture in the case of simple supercuspidal representations of $\GL_n(F)$, using the strategy of Jiang, Nien, and Stevens.

\subsection{Prelminaries on Jacquet's conjecture}\label{jacquetpreliminaries}
Let $F$ be a nonarchimedean local field of characteristic zero, and fix an additive character $\psi$ of $F$.  Let $G_n:=\GL_n(F)$ and let $\pi$ be an irreducible admissible generic representation of $G_n$.
For any irreducible admissible generic representation $\tau$ of $G_r$, a family of local gamma factors $\gamma(s, \pi \times \tau, \psi)$ can be defined using Rankin-Selberg convolution
\cite{JPSS83} or the Langlands-Shahidi method \cite{S84}.
Jacquet has formulated the following conjecture on precisely which family of local gamma factors should uniquely determine $\pi$ (see \cite{JNS13} for more related discussion).

\begin{conj}[The Jacquet Conjecture on the Local Converse Problem]\label{lcp1}
Let $\pi_1$ and $\pi_2$ be irreducible admissible generic representations of $G_n$. If
$$\gamma(s, \pi_1 \times \tau, \psi) = \gamma(s, \pi_2 \times \tau, \psi),$$
for any irreducible admissible generic representation $\tau$ of $G_r$ with $r = 1, \ldots, [\frac{n}{2}]$,
then $\pi_1 \cong \pi_2$.
\end{conj}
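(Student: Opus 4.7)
The plan is to reduce Conjecture \ref{lcp1} to the case of supercuspidal representations, and then to attack the supercuspidal case by generalizing the ``special pair of Whittaker functions'' strategy of Jiang--Nien--Stevens that is applied in \S\ref{jacquetsection} to simple supercuspidals.

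The first step is the reduction to supercuspidals. By the Bernstein--Zelevinsky classification, every irreducible generic representation of $G_n$ is fully induced from an essentially square-integrable (Speh) representation of a Levi subgroup, and each essentially square-integrable representation is in turn built from a supercuspidal representation of a smaller general linear group via a segment. The multiplicativity of local gamma factors under parabolic induction (Jacquet--Piatetski-Shapiro--Shalika and Shahidi) then converts the hypothesis $\gamma(s, \pi_1 \times \tau, \psi) = \gamma(s, \pi_2 \times \tau, \psi)$ for all $\tau$ of rank $\leq [n/2]$ into a system of gamma factor matchings among the supercuspidal constituents, twisted by a rich family of $\tau$. The segments themselves should be recovered from the poles of $\gamma(s, \pi_i \times |\det|^{s_0}\tau, \psi)$ as $s_0 \in \mathbb{C}$ and $\tau$ vary, using arguments of the kind already available in the tempered setting. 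In this way the full conjecture is reduced to: if $\sigma_1, \sigma_2$ are two supercuspidals of $G_m$ ($m \leq n$) whose twisted $\gamma$-factors agree against every generic $\tau$ of rank $\leq [n/2]$, then $\sigma_1 \cong \sigma_2$.

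The second step, the supercuspidal case, would proceed as in this paper, with ``simple supercuspidal'' replaced by ``arbitrary supercuspidal''. By Bushnell--Kutzko, any supercuspidal $\sigma$ of $G_m$ is compactly induced from a simple type $(J,\chi)$ with $J$ open and compact modulo center. Given a pair $\sigma_1,\sigma_2$ with the same matching $\gamma$-factors, the goal is to construct Whittaker functions $W_i \in \mathcal{W}(\sigma_i,\psi)$ with supports controlled by $J$, together with Whittaker test vectors $W'$ for $\tau$ of rank $\leq [m/2]$, so that the Rankin--Selberg zeta integrals $\Psi(s; W_i, W')$ are explicitly computable. The local functional equation (Theorem \ref{gammafactor}) then converts the matching of $\gamma$-factors into matching of zeta integrals, which should force the underlying simple characters and $\beta$-extensions of $\sigma_1$ and $\sigma_2$ to coincide, whence $\sigma_1 \cong \sigma_2$.

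The main obstacle is this second step in full generality. For the simple supercuspidals treated in the paper, the type $H = ZI^+$ is small, the affine generic character is essentially unique, and the support of the Whittaker function built via the affine Bruhat decomposition (Lemma \ref{affinebruhat}) is extremely rigid, reducing the matching of integrals to a finite combinatorial identity. For higher-depth supercuspidals, the Bushnell--Kutzko data (simple strata $[\mathfrak{A}, n, 0, \beta]$, simple characters, $\beta$-extensions, and cuspidal representations of the attached finite reductive quotient) is considerably richer. The likely route is an induction on depth: first use stability of $\gamma$-factors under highly ramified twists to pin down the conductor and the underlying simple stratum of $\sigma_i$; then recover the simple character and a $\beta$-extension layer by layer; finally match the cuspidal representations on the finite reductive quotient using the classical converse theorem over finite fields. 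Carrying this program out uniformly across all depths and inertial types is the essential difficulty, and is precisely the generalization ``currently in progress'' mentioned in the introduction.
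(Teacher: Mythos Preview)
The statement you are attempting to prove is labeled \emph{Conjecture} in the paper for a reason: the paper does not prove it. Conjecture~\ref{lcp1} is stated as an open problem, reduced via \cite{JNS13} to the equivalent Conjecture~\ref{lcp2}, and the paper then verifies only one special case --- that of simple supercuspidal representations --- by constructing a special pair of Whittaker functions (Theorem~\ref{thmepi}) and invoking Theorem~\ref{thmJNS13}. There is no ``paper's own proof'' of Conjecture~\ref{lcp1} to compare against.

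Your proposal is therefore not a proof but a program, and you acknowledge this yourself in the final paragraph: the reduction to supercuspidals via Bernstein--Zelevinsky and multiplicativity of gamma factors is standard and is indeed how \cite{JNS13} obtains the equivalence of Conjectures~\ref{lcp1} and~\ref{lcp2}; but your ``second step'' --- proving the supercuspidal case for arbitrary depth by layering the Bushnell--Kutzko data and recovering simple characters and $\beta$-extensions from matching gamma factors --- is exactly the content of the open conjecture. You have correctly identified where the difficulty lies, but a plan that ends with ``this is the essential difficulty'' and ``currently in progress'' is not a proof. In particular, nothing in your sketch explains how to force two distinct simple characters (or two distinct cuspidal representations on the finite reductive quotient) to agree using only twists by $\tau$ of rank at most $[n/2]$; that is the whole point of the conjecture, and neither the paper nor your proposal supplies it.
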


In \cite{JNS13}, Conjecture \ref{lcp1} is shown to be equivalent to the following conjecture.

\begin{conj}\label{lcp2}
Let $\pi_1$ and $\pi_2$ be irreducible unitarizable supercuspidal representations of $G_n$. If
$$\gamma(s, \pi_1 \times \tau, \psi) = \gamma(s, \pi_2 \times \tau, \psi),$$
for any irreducible supercuspdial representation $\tau$ of $G_r$ with $r = 1, \ldots, [\frac{n}{2}]$,
then $\pi_1 \cong \pi_2$.
\end{conj}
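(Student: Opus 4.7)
The plan is to follow the strategy developed by Jiang, Nien, and Stevens in \cite{JNS13}, which reduces Conjecture \ref{lcp2} to producing, for any pair of irreducible unitarizable supercuspidal representations $\pi_1, \pi_2$ of $G_n$ sharing a central character, a \emph{special pair of Whittaker functions} $W_i \in \mathcal{W}(\pi_i, \psi)$. Once such a pair exists, combining it with the hypothesized equality of gamma factors and the local functional equation of Theorem \ref{gammafactor} forces the partial Bessel functions of $\pi_1$ and $\pi_2$ to coincide on sufficiently many Bruhat cells to conclude $\pi_1 \cong \pi_2$. The entire problem therefore reduces to the construction of these special Whittaker function pairs for arbitrary unitarizable supercuspidals, not merely for the simple ones.

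First, I would extract the tame invariants from the gamma factor identity. The twists by ramified characters of $F^{\times}$ (the $r=1$ case) recover the central characters $\omega_{\pi_i}$, the conductors, and therefore the depths $\rho(\pi_i)$: the equality of conductors follows from the location of the pole of $\gamma(s, \pi_i \times \chi, \psi)$ after clearing $L$-factors, and the equality of central characters follows from comparing leading terms in $q^{-s}$ via the formula generalizing Lemma \ref{twistedgammarepresentation}. Next, I would invoke the Bushnell-Kutzko classification to write each $\pi_i = \mathrm{ind}_{J_i}^{G_n} \Lambda_i$ for an extended maximal simple type $\Lambda_i$ attached to a simple stratum $[\mathfrak{A}_i, n_i, 0, \beta_i]$, so that the equality of depth pins down the isomorphism type of the hereditary order $\mathfrak{A}_i$ and the integer $n_i$, while the equality of central characters constrains the character datum on the center of $J_i$.

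The substantive work is then to propagate the matching from the tame twists up through successively more wildly ramified twists to recover the full simple type datum --- the field extension $E_i = F[\beta_i]$, the simple character $\theta_i$, and the choice of $\beta$-extension. The plan is to generalize the Paskunas-Stevens Whittaker function of \S\ref{computingepsilonfactors} to an explicit Whittaker function adapted to an arbitrary maximal simple type, compute its zeta integral against the Whittaker function of a twisting supercuspidal $\tau$ of degree $r$ using the Bushnell-Henniart explicit formulas, and verify that the family of identities obtained as $r$ ranges over $1, 2, \ldots, [\tfrac{n}{2}]$ is collectively strong enough to force the Bushnell-Kutzko data of $\pi_1$ and $\pi_2$ to coincide up to $G_n$-conjugacy. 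The main obstacle will be this last verification: a single gamma factor for twists of degree $r$ records only an averaged piece of the type datum at level roughly $r/n$, and the key combinatorial fact to establish is that these averages for $r \leq [\tfrac{n}{2}]$ collectively separate all $G_n$-conjugacy classes of maximal simple types. The simple supercuspidal subcase handled in Theorem \ref{thmepi} below realizes this separation at the bottom of the depth filtration (where $\rho(\pi) = 1/n$); extending it through arbitrary depth, one stratum layer at a time, is the essential difficulty that the approach would need to overcome. A complementary route worth pursuing is a globalization argument: embed $\pi_1, \pi_2$ as the local components at a fixed place of cuspidal automorphic representations of $\GL_n$ over a number field with controlled ramification elsewhere, and then invoke the Cogdell-Piatetski-Shapiro global converse theorem --- here the obstruction is rigidly controlling the other local components so that matching global $L$-functions really does follow from the hypothesized local identities.
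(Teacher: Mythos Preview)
The statement you are attempting to prove is labelled \texttt{Conjecture} in the paper, not \texttt{Theorem}: the paper does \emph{not} contain a proof of it. What the paper does is verify the Jiang--Nien--Stevens criterion (Theorem~\ref{thmJNS13}) in the single case where $\pi_1$ and $\pi_2$ are both \emph{simple} supercuspidal representations (Theorem~\ref{thmepi}); the general case remains open in this paper. So there is no ``paper's own proof'' to compare against.

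Your proposal is honest about this: you correctly identify the JNS reduction to special pairs of Whittaker functions, and you explicitly flag that the core step --- showing that gamma-factor twists up to degree $[n/2]$ separate Bushnell--Kutzko maximal simple types up to conjugacy --- is ``the essential difficulty that the approach would need to overcome.'' That is accurate, but it means what you have written is a research outline, not a proof. In particular, two of your steps are not established: (i) the claim that $r=1$ twists alone recover the central character is not correct in general without additional hypotheses (the paper uses it only after already knowing both representations are simple supercuspidal, where Lemma~\ref{twistedgammarepresentation} applies); and (ii) the assertion that the Pa\v{s}k\={u}nas--Stevens Whittaker functions yield tractable zeta integrals against arbitrary supercuspidal $\tau$ of degree $\le [n/2]$, and that these integrals separate types, is precisely the open content of the conjecture. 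The globalization alternative you mention faces the well-known obstruction you yourself name. In short: your diagnosis of the landscape is sound, but nothing here constitutes a proof of Conjecture~\ref{lcp2}, and the paper does not claim one either.
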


Jiang, Nien and Stevens \cite{JNS13} have formulated a general approach to prove Conjecture \ref{lcp2}. With this approach, Conjecture \ref{lcp2} is proven under an assumption which has been verified in several cases, including the case of depth zero supercuspidal representations. In this section, we verify this assumption for the case of simple supercuspidal representations.
To make things precise, we first need to recall some notation as follows.

Let $B_n=T_nU_n$ be the Borel subgroup of $G_n$
which consists of upper triangular matrices, where $T_n$ consists of all diagonal matrices and $U_n$ is the unipotent radical of $B_n$. For any $u \in U_n$, let
$$\psi_{U_n}(u)=\psi(\sum_{i=1}^{n-1} u_{i,i+1}).$$

An irreducible admissible representation $(\pi, V_{\pi})$ of $G_n$ is called \emph{generic} if $\Hom_{G_n}(V_{\pi}, \Ind_{U_n}^{G_n} \psi_{U_n}) \neq 0.$
By the uniqueness of local Whittaker models, this Hom-space is at most one dimensional. By Frobenius reciprocity,
$$\Hom_{G_n}(V_{\pi}, \Ind_{U_n}^{G_n} \psi_{U_n})\cong \Hom_{U_n} (V_{\pi}|_{U_n}, \psi_{U_n}).$$
Therefore, $\Hom_{U_n} (V_{\pi}|_{U_n}, \psi_{U_n})$ is also at most one dimensional.

Assume that $(\pi, V_{\pi})$ is generic. Fix a nonzero functional
$$l \in \Hom_{U_n} (V_{\pi}|_{U_n}, \psi_{U_n}),$$
which is unique up to scalar.
The Whittaker function attached to a vector $v \in V_{\pi}$ is defined by
$$W_v(g):= l(\pi(g)v), \text{ for all } g \in G_n.$$
It is easy to see that $W_v \in \Ind_{U_n}^{G_n} \psi_{U_n}$.
The space
$$\mathcal{W}(\pi, \psi_{U_n}) :=\{W_v| v \in V_{\pi}\}$$
is called the \emph{Whittaker model} of $\pi$, and $G_n$ acts on it by right translation.
It is easy to see that the Whittaker model of $\pi$ is independent of the choice of the nonzero functional $l$.

Jiang, Nien and Stevens \cite{JNS13} introduced the notion of a $K$-special Whittaker function as follows.

\begin{defn}\label{def1}
Let $\pi$ be an irreducible unitarizable supercuspidal representation of $G_n$ and let $K$ be a compact-mod-center open subgroup of $G_n$. A nonzero Whittaker function $W_{\pi}$ for $\pi$ is called \emph{$K$-special} if the support of $W_{\pi}$ satisfies $\mathrm{Supp} (W_{\pi}) \subset U_n K$, and if
$$W_{\pi}(k^{-1})=\overline{W_{\pi}(k)} \text{ for all } k \in K,$$
where $\overline{z}$ denotes the complex conjugate of $z \in \BC$.
\end{defn}

Let $P_n$ be the mirabolic subgroup of $G_n$ which consists of matrices with last row equal to $(0, \ldots, 0, 1)$.

\begin{defn}\label{def2}
Let $(\pi_1, \pi_2)$ be a pair of irreducible unitarizable supercuspidal representations of $G_n$ with the same central character. Let $W_{\pi_1}$ and $W_{\pi_2}$ be nonzero Whittaker functions for $\pi_1$ and $\pi_2$, respectively.
$(W_{\pi_1}, W_{\pi_2})$ is called a \emph{special pair} of Whittaker functions for $(\pi_1, \pi_2)$ if there exists a compact-mod-center open subgroup $K$ of $G_n$ such that $W_{\pi_1}$ and  $W_{\pi_2}$ are both $K$-special and
$$W_{\pi_1}(p)=W_{\pi_2}(p), \text{ for all } p \in P_n.$$
\end{defn}

The following theorem is one of the main results of Jiang, Nien and Stevens in \cite{JNS13}, which provides a general approach to prove Conjecture \ref{lcp2}.

\begin{thm}[Jiang, Nien, and Stevens \cite{JNS13}]\label{thmJNS13}
Let $(\pi_1, \pi_2)$ be a pair of irreducible unitarizable supercuspidal representations of $G_n$ with the same central character. Assume that there exists a special pair of Whittaker functions $(W_{\pi_1}, W_{\pi_2})$ for $(\pi_1, \pi_2)$. If
$$\gamma(s, \pi_1 \times \tau, \psi) = \gamma(s, \pi_2 \times \tau, \psi),$$
for any irreducible supercuspdial representation $\tau$ of $G_r$ with $r = 1, \ldots, [\frac{n}{2}]$,
then $\pi_1 \cong \pi_2$.
\end{thm}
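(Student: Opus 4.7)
The plan is to leverage the Rankin--Selberg local functional equation of Theorem \ref{gammafactor} together with the hypothesis that $\gamma(s,\pi_1\times\tau,\psi)=\gamma(s,\pi_2\times\tau,\psi)$ for all supercuspidal $\tau$ on $G_r$ with $r\leq[\frac{n}{2}]$, in order to promote the assumed agreement $W_{\pi_1}|_{P_n}=W_{\pi_2}|_{P_n}$ to an equality of the full Whittaker models. Uniqueness of the Whittaker model will then give $\pi_1\cong\pi_2$.

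The first step would exploit the observation that the ``standard'' zeta integral $\Psi(s;W_{\pi_i},W')$ involves $W_{\pi_i}$ only at elements of the shape $\mathrm{diag}(h,I_{n-m})$, which lie in $P_n$. Hence the special-pair assumption immediately yields
\[
\Psi(s;W_{\pi_1},W')=\Psi(s;W_{\pi_2},W')
\]
for every $W'\in\mathcal{W}(\tau,\psi^{-1})$ and every $m\leq[\frac{n}{2}]$. Combining this with Theorem \ref{gammafactor} and the matching of gamma factors produces the dual identity
\[
\widetilde{\Psi}\bigl(1-s;\rho(w_{n,m})\widetilde{W_{\pi_1}},\widetilde{W'}\bigr)=\widetilde{\Psi}\bigl(1-s;\rho(w_{n,m})\widetilde{W_{\pi_2}},\widetilde{W'}\bigr),
\]
which is a nontrivial integral relation in which $W_{\pi_1}$ and $W_{\pi_2}$ are evaluated on elements outside of $P_n$, namely on translates by $w_n$ of transposed-inverse mirabolic elements, together with the inner integration over $\mathrm{M}_{(n-m-1)\times m}(F)$.

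The second step would be to vary $W'$ through the Whittaker models of all supercuspidal $\tau$ on $G_r$ for $r\leq[\frac{n}{2}]$, and use a density/Fourier-inversion argument on $\mathrm{GL}_r(F)$ to convert the integral equalities above into pointwise information about the difference $W_{\pi_1}-W_{\pi_2}$ on the ``co-mirabolic'' part of $G_n$. Here, the $K$-special condition enters in two essential ways: the support condition $\mathrm{Supp}(W_{\pi_i})\subset U_n K$ truncates the problem to a single compact-mod-center region, and the conjugation symmetry $W_{\pi_i}(k^{-1})=\overline{W_{\pi_i}(k)}$ provides an involutive functional equation on $K$ which ties the values of $W_{\pi_i}$ at $k$ and $k^{-1}$. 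Using this symmetry together with the agreement on $P_n$, one can exchange values on elements $k\in K$ for values on the inverse set $k^{-1}$, which after applying $\widetilde{\cdot}$ and $\rho(w_{n,m})$ lie in the region controlled by the $\widetilde\Psi$ identities. This should allow one to bootstrap the agreement from $P_n$ across a full set of representatives for $U_n\backslash G_n$.

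The main obstacle, and the heart of the matter, will be executing this extension step rigorously: one must argue that the family of Whittaker functions of supercuspidal representations of $G_r$ for $r\leq[\frac{n}{2}]$ spans a sufficiently large subspace of the Schwartz-type space appearing inside $\widetilde\Psi$ to force pointwise equality, and one must use the affine Bruhat-type decomposition together with the $K$-special symmetry to cover the remaining double cosets outside $P_n$. This is essentially a local analogue of strong multiplicity one for the mirabolic, and is the technical core of the Jiang--Nien--Stevens strategy. Once achieved, the combined equalities give $W_{\pi_1}=W_{\pi_2}$ in $\mathcal{W}(\pi_i,\psi_{U_n})$, whence $\pi_1\cong\pi_2$.
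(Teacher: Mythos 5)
The paper does not prove this statement at all: it is quoted verbatim from Jiang--Nien--Stevens and used as a black box (the paper's contribution, Theorem \ref{thmepi}, is to verify the \emph{hypothesis} of this theorem for simple supercuspidals, not to reprove the theorem itself). So the honest comparison is between your sketch and the actual argument in \cite{JNS13}.

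Your first step is correct and is indeed how the JNS argument begins: since $\Psi(s;W_{\pi_i},W')$ only samples $W_{\pi_i}$ at matrices $\mathrm{diag}(h,I_{n-m})\in P_n$, the special-pair hypothesis gives $\Psi(s;W_{\pi_1},W')=\Psi(s;W_{\pi_2},W')$, and the functional equation plus the assumed equality of gamma factors transfers this to the dual integrals $\widetilde{\Psi}$. But everything after that is a plan, not a proof. You yourself flag the decisive step --- extending the agreement of $W_{\pi_1}$ and $W_{\pi_2}$ from $P_n$ to all of $G_n$ using only twists of rank $r\leq[\frac{n}{2}]$ --- as ``the main obstacle'' and ``the technical core,'' and then do not execute it. That step is precisely the content of the main technical theorem of \cite{JNS13}; it proceeds by an induction over Bruhat cells using partial Bessel functions, and the point of the $K$-special symmetry $W_{\pi_i}(k^{-1})=\overline{W_{\pi_i}(k)}$ is exactly to ``reflect'' information obtained from low-rank twists so that ranks up to $[\frac{n}{2}]$ suffice where the classical argument (Henniart, Chen) needs ranks up to $n-1$. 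Your sketch gestures at this (``this should allow one to bootstrap'') but gives no mechanism: you do not say which cells are reached at each stage, why the family of Whittaker functions of supercuspidal $\tau$ of rank $\leq[\frac{n}{2}]$ separates the relevant functions, or how the inner integration over $\mathrm{M}_{(n-m-1)\times m}(F)$ is inverted. As it stands the proposal identifies the right strategy but leaves the theorem unproven; for the purposes of this paper the correct move is simply to cite \cite{JNS13}.
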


Using Theorem \ref{thmJNS13}, Jiang, Nien, and Stevens are able to prove that if $deg(\pi_1) < n$ (see \cite[Section 4.2]{JNS13} for the definition of degree of an irreducible supercuspidal representation of $\GL_n(F)$), then Conjecture \ref{lcp2} is true.  We will show that any pair $(\pi_1, \pi_2)$ of irreducible unitarizable simple supercuspidal representations of $G_n$ with the same central character admits a special pair of Whittaker functions.  Since the degree of a simple supercuspidal representation is $n$, our result implies that Conjecture \ref{lcp2} is true for a case that is not covered by \cite{JNS13}.  Our main theorem is the following.

\begin{thm}\label{thmepi}
Let $(\pi_1, \pi_2)$ be a pair of irreducible unitarizable simple supercuspidal representations of $G_n$ with the same central character. Then there exists a special pair of
Whittaker functions $(W_{\pi_1}, W_{\pi_2})$ for $(\pi_1, \pi_2)$.
\end{thm}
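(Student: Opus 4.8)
The plan is to realize each $\pi_i$ explicitly as a compactly induced simple supercuspidal, to take as Whittaker functions the explicit ones of \S\ref{computingepsilonfactors}, and to take for $K$ the compact-mod-center group that the two inducing subgroups jointly generate.

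Let $\omega$ be the common central character; it is unitary since $\pi_1,\pi_2$ are unitarizable. As $c$ runs over $\mathfrak{o}^{\times}/(1+\mathfrak{p})$ and $\zeta$ over the $n^{\mathrm{th}}$ roots of $\omega(c\varpi)$, the representations $\sigma_{\chi_c}^{\zeta}$ — where $\chi_c$ is the affine generic character of $H=ZI^{+}$ built as in \S\ref{prelimsimplesupercuspidal} from the uniformizer $c\varpi$ (all $t_j=1$) — run without repetition over all simple supercuspidals with central character $\omega$ (the orbit invariant of $\chi_c$ is $c^{-1}\bmod (1+\mathfrak{p})$, and the count $n(q-1)$ is correct). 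So write $\pi_i=\sigma_{\chi_i}^{\zeta_i}$ with $\chi_i=\chi_{c_i}$, let $g_i$ be the element of \S\ref{prelimsimplesupercuspidal} with $c_i\varpi$ in the lower-left corner, and set $H_i':=\langle g_i\rangle ZI^{+}$. If $g_0$ denotes the same matrix with $\varpi$ in that corner and $d_i=\diag(c_i,1,\ldots,1)$, then $g_i=g_0 d_i$ with $d_i\in\GL_n(\mathfrak{o})$, so $g_i\in g_0 I$ and hence $H_i'\subseteq Z\langle g_0\rangle I$; since $g_0$ normalizes $I$ (and $I^{+}$), $Z\langle g_0\rangle I$ is open and compact-mod-center. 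Let $W_i\in\mathcal{W}(\pi_i,\psi)$ be the Whittaker function of \S\ref{computingepsilonfactors} attached to the data $(\chi_i,\zeta_i,g_i)$, i.e.\ $W_i(uh')=\psi_{U_n}(u)\,\chi_{i,\zeta_i}(h')$ for $u\in U_n$, $h'\in H_i'$, and $0$ otherwise; it is nonzero, well-defined, and in the Whittaker model by the argument of loc.\ cit., and $\mathrm{Supp}(W_i)\subseteq U_nH_i'$.

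The first real step is $W_1|_{P_n}=W_2|_{P_n}$. Here one computes $U_nH_i'\cap P_n$: left multiplication by $U_n$ fixes the bottom row, and for $1\le j\le n-1$ the $j$-th entry of the bottom row of any element of $U_n g_i^{j}ZI^{+}$ is nonzero, so no such element lies in $P_n$; hence $U_nH_i'\cap P_n=U_n(ZI^{+}\cap P_n)$. On $ZI^{+}\cap P_n$ the $(n,1)$-entry — the only place the uniformizer enters the formula for $\chi_i$ — vanishes, so $\chi_{i,\zeta_i}$ restricted to $ZI^{+}\cap P_n$ is just $\psi$ of the sum of the superdiagonal entries, independently of $i$ (and of $\zeta_i$). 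Thus $W_1$ and $W_2$ are supported on the same subset of $P_n$ and take the same values there.

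Finally take $K:=\langle H_1',H_2'\rangle$; it is open (it contains $I^{+}$) and compact-mod-center (it lies in $Z\langle g_0\rangle I$). Then $\mathrm{Supp}(W_i)\subseteq U_nH_i'\subseteq U_nK$, and $W_1|_{P_n}=W_2|_{P_n}$ was just shown, so it remains to check that $W_1,W_2$ are both $K$-special, i.e.\ $W_i(k^{-1})=\overline{W_i(k)}$ for all $k\in K$. On $H_i'$ this is immediate: $W_i|_{H_i'}=\chi_{i,\zeta_i}$ is a unitary character (using that $\omega$ is unitary and that $|\zeta_i|=1$, the latter because $\zeta_i^{\,n}=\omega(c_i\varpi)$), and $k^{-1}\in H_i'$ as well. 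For $k\in K\setminus H_i'$ one needs $W_i(k)=W_i(k^{-1})=0$, which follows from the group-theoretic identity $K\cap U_nH_i'=H_i'$; this in turn reduces — by comparing determinant valuations and using that $g_0$ normalizes $I$ — to $U_n\cap Z\langle g_0\rangle I=U_n(\mathfrak{o})\subseteq I^{+}\subseteq H_i'$. Establishing $K\cap U_nH_i'=H_i'$ — equivalently, that the explicit Whittaker function of $\pi_i$, together with its inverse, vanishes on all of $K$ outside $H_i'$ — is the step I expect to be the main obstacle, since it rests on the affine Bruhat decomposition (Lemmas \ref{affinebruhatprelim} and \ref{affinebruhat}) and on a careful analysis of how $g_0$ and $g_i$ interact with the Iwahori; granting it, Theorem \ref{thmepi} follows.
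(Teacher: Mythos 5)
Your proposal is correct and follows essentially the same route as the paper's proof of Theorem \ref{main1}: the same explicit Whittaker functions from \S\ref{computingepsilonfactors}, the same compact-mod-center group $K=\langle g_{\varpi_1},g_{\varpi_2}\rangle ZI^{+}$, agreement on $P_n$ via the nonvanishing of the $(n,j)$-entry of the bottom row on $U_ng_i^{j}ZI^{+}$, and unitarity of $\chi_{i,\zeta_i}$ for the $K$-special condition. The step you flag as the ``main obstacle,'' $K\cap U_nH_i'=H_i'$, is in fact settled by the reduction you already give (for $k=vh'$ one has $v=kh'^{-1}\in U_n\cap Z\langle g_0\rangle I=U_n(\mathfrak{o})\subseteq I^{+}$), and the paper treats this point as immediate from the definition of $W_i$ together with Lemma \ref{affinebruhat}.
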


\begin{rmk} $ $
\begin{enumerate}
\item Although Theorem \ref{thmepi} implies that Conjecture \ref{lcp2} is true in a new setting, our results from \S\ref{llcsimple} are actually much stronger.  Indeed, because of Remark \ref{simplesupercuspidaldetermined}, \cite[Corollary 2.7]{JNS13}, \cite[Theorem 3.1]{LR03}, and the fact that the simple supercuspidal representations of $\GL_n(F)$ exhaust all depth $\frac{1}{n}$ supercuspidal representations of $\GL_n(F)$, we can conclude the following result pertaining to the local converse problem for $\GL_n(F)$.  Suppose that $\pi_1$ is simple supercuspidal and $\pi_2$ is supercuspidal. If $$\gamma(s, \pi_1 \times \lambda, \psi) = \gamma(s, \pi_2 \times \lambda, \psi)$$ for all characters $\lambda$ of $F^{\times}$, then $\pi_1 \cong \pi_2$.  A version of this result can also be found in \cite[Proposition 2.2]{BH13}.  Peng Xu has also independently obtained the same result, for $p \nmid n$, in \cite{X13}.
\item Even though the application of Theorem \ref{thmepi} to Jacquet's Conjecture is subsumed by our work in \S\ref{llcsimple}, we find the statement of Theorem \ref{thmepi} to be interesting in and of its own right.  It does not seem clear to us that if two supercuspidal representations of $\GL_n(F)$ can be induced from different compact open subgroups, then they could have a special pair of Whittaker functions.
\item To prove Conjecture \ref{lcp2}, it suffices to consider supercuspidal representations $\pi_1, \pi_2$ of the same depth.  The reason is that if $\gamma(s, \pi_1, \psi) = \gamma(s, \pi_2, \psi)$ as functions in $s$, then by \cite[Theorem 3.1]{LR03}, $\pi_1$ must have the same depth as $\pi_2$.
\end{enumerate}
\end{rmk}

\subsection{The case of simple supercuspidal representations}\label{specialpairsimple}
Recall that for a fixed central character, the simple supercuspidal representations were parameterized by a choice of uniformizer $\varpi$ and a complex root of unity $\zeta$.  Since we will be dealing with multiple simple supercuspidal representations at the same time in this section, we will denote by $\pi_{\varpi, \zeta}$ (see \S\ref{prelimsimplesupercuspidal}) the corresponding simple supercuspidal.  Recall also that $\varpi$ determines an affine generic character $\chi$, from which we defined an element $g_{\chi}$.  In this section, we will write $g_{\varpi}$ instead of $g_{\chi}$.  Given $\zeta$, we also defined $\chi_{\zeta}$, which we will now call $\chi_{\varpi, \zeta}$.  For the irreducible simple supercuspidal representation $\pi_{\varpi, \zeta}$, a Whittaker function has been defined in \S\ref{computingepsilonfactors} as follows:
\begin{equation*}
W_{\varpi, \zeta}(g) = \left\{
\begin{array}{rll}
\psi_{U}(u) \chi_{\varpi, \zeta}(h') & \text{if} & g = uh' \in U H'\\
0 &  & \text{else}
\end{array} \right.
\end{equation*}
where $H' = \langle g_{\varpi} \rangle Z I^+$.  The following theorem is the main result of this section.

\begin{thm}\label{main1}
Let $\varpi_i$, for $i = 1,2$ be two uniformizers of $F$, and let $\zeta_i$ be an $n^{\mathrm{th}}$ root of $\omega(\varpi_i)$, for $i = 1,2$.  For any pair of irreducible unitarizable simple supercuspidal representations $(\pi_{\varpi_1, \zeta_1}, \pi_{\varpi_2, \zeta_2})$ of $\GL_n(F)$ with the same given unitary central character $\omega$, $(W_{\varpi_1, \zeta_1},W_{\varpi_2, \zeta_2})$ is a special pair of Whittaker functions.
\end{thm}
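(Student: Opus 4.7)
The plan is to exhibit a single compact-mod-center open subgroup $K$ that witnesses $K$-specialness for both $W_{\varpi_1,\zeta_1}$ and $W_{\varpi_2,\zeta_2}$, and then verify the matching condition on $P_n$ by direct computation. I would take
\[
K := Z\cdot I\cdot\langle g_{\varpi_1}\rangle.
\]
Because $g_{\varpi_1}$ represents the affine Weyl element that cyclically permutes the affine simple roots of $G$, it normalizes both $I$ and $I^+$; combined with $g_{\varpi_1}^n=\varpi_1 I\in Z$, this makes $K$ a compact-mod-center open subgroup. The inclusion $H_1'\subset K$ is immediate, and for $H_2'\subset K$ I would use the elementary factorization $g_{\varpi_2}=\diag(1,\ldots,1,\varpi_2/\varpi_1)\cdot g_{\varpi_1}$, in which $\diag(1,\ldots,1,\varpi_2/\varpi_1)$ lies in $T\cap\GL_n(\mathfrak{o})\subset I$. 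This yields $\mathrm{Supp}(W_{\varpi_i,\zeta_i})\subset UH_i'\subset UK$ for $i=1,2$.

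The technical heart of the argument is the identity $K\cap UH_i'=H_i'$ for $i=1,2$. One inclusion is trivial; for the other, given $k=uh'\in K$ with $u\in U$ and $h'\in H_i'$, we have $u=kh'^{-1}\in K\cap U$, and a determinant-valuation count (using $\mathrm{val}(\det g_{\varpi_1})=1$) forces $u\in U\cap\GL_n(\mathfrak{o})=N^+$. Since $N^+\subset I^+\subset H_i'$, this proves $k\in H_i'$. The consequence is that on $K\setminus H_i'$ both $W_{\varpi_i,\zeta_i}(k)$ and $W_{\varpi_i,\zeta_i}(k^{-1})$ vanish (as $H_i'$ is a group), so the $K$-special condition reduces to the case $k\in H_i'$.

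For $k=z g_{\varpi_i}^j i^+\in H_i'$ with $0\le j<n$, I would rewrite $k^{-1}$ using $g_{\varpi_i}^{-j}=\varpi_i^{-1}g_{\varpi_i}^{n-j}$ (when $j\ge 1$) and the normalization of $I^+$ by $g_{\varpi_i}$, producing $k^{-1}=z^{-1}\varpi_i^{-1}g_{\varpi_i}^{n-j}(i^+)'$ for some $(i^+)'\in I^+$ obtained by conjugating $(i^+)^{-1}$. The three ingredients in the resulting identity $W_{\varpi_i,\zeta_i}(k^{-1})=\overline{W_{\varpi_i,\zeta_i}(k)}$ are: the central-character relation $\omega(\varpi_i)=\zeta_i^n$, which turns $\omega(\varpi_i^{-1})\zeta_i^{n-j}$ into $\zeta_i^{-j}$; the $g_{\varpi_i}$-conjugation invariance of the symmetric affine generic character $\chi(i^+)=\psi(r_1+\cdots+r_n)$, which holds because conjugation by $g_{\varpi_i}$ cyclically permutes the coordinates $r_1,\ldots,r_n$; and the unitarity of $\omega$ and $\chi$, which gives $\chi((i^+)^{-1})=\overline{\chi(i^+)}$ and analogously for $\omega$.

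Finally, for the matching on $P_n$, given $p\in P_n\cap UH_i'$ with decomposition $p=u\cdot z g_{\varpi_i}^j i^+$, the last row of $p$, namely $(0,\ldots,0,1)$, equals the last row of $zg_{\varpi_i}^j i^+$. For $j\ge 1$ this last row is $\varpi_i z\bigl((i^+)_{j,1},\ldots,(i^+)_{j,n}\bigr)$; the required vanishing of the first $n-1$ entries would force $(i^+)_{j,j}=0$, contradicting $(i^+)_{j,j}\in 1+\mathfrak{p}$. Hence $j=0$, yielding $z\in\RZ(1+\mathfrak{p})$ and $(i^+)_{n,k}=0$ for $k<n$, so $\omega(z)=1$ and $\chi(i^+)=\psi\bigl(\sum_{k=1}^{n-1}(i^+)_{k,k+1}\bigr)$, both independent of $(\varpi_i,\zeta_i)$. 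Since the criterion for $p\in UH_i'$ is likewise independent of $i$, we obtain $W_{\varpi_1,\zeta_1}(p)=W_{\varpi_2,\zeta_2}(p)$ on all of $P_n$. The principal obstacle is the identity $K\cap UH_i'=H_i'$; once this is secured, the remaining verifications are straightforward manipulations with the explicit formula for the Whittaker function and the cyclic symmetry of $\chi$.
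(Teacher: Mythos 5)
Your proposal is correct, and it follows the same overall skeleton as the paper (choose one compact-mod-center $K$ containing both $H_1'$ and $H_2'$, reduce $K$-specialness to unitarity of the inducing characters, then match values on $P_n$), but it implements two of the key steps differently. First, the paper takes $K=\langle g_{\varpi_1},g_{\varpi_2}\rangle Z I^+$ and simply asserts that $W_{\varpi_i,\zeta_i}$ vanishes on $K\smallsetminus K_i$; your larger $K=Z\cdot I\cdot\langle g_{\varpi_1}\rangle$ lets you make that support statement into a clean lemma, $K\cap U H_i'=H_i'$, proved by the determinant-valuation count showing $K\cap U\subset U\cap\GL_n(\mathfrak{o})\subset I^+$ --- this fills in a point the paper leaves implicit, and your explicit verification that $\chi_{\varpi_i,\zeta_i}(k^{-1})=\overline{\chi_{\varpi_i,\zeta_i}(k)}$ (via $g_{\varpi_i}^{-j}=\varpi_i^{-1}g_{\varpi_i}^{n-j}$, the cyclic symmetry of the affine generic character under $g_{\varpi_i}$-conjugation, and $\zeta_i^n=\omega(\varpi_i)$) unwinds what the paper compresses into ``$\chi_{\varpi_i,\zeta_i}$ is unitary.'' Second, for the matching on $P_n$ the paper decomposes $P_n=\iota(G_{n-1})N_{n-1,1}$ and invokes the affine Bruhat decomposition (Lemma \ref{affinebruhat}) for $G_{n-1}$ to force the $\iota(G_{n-1})$-part of the support into $\iota(U_{n-1}I^+_{(n-1)})$, whereas you read off directly from the last row of $p=u\cdot zg_{\varpi_i}^ji^+$ that $j=0$, $z\in\RZ(1+\mathfrak{p})$, and $(i^+)_{n,1}=0$; your route is more elementary and treats all of $P_n$ at once, at the cost of being slightly more computational. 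Both arguments are valid; just note that your symbol for $U\cap\GL_n(\mathfrak{o})$ clashes with the paper's use of $N$ for the normalizer of $T$, so rename it if you write this up.
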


\begin{proof}
Let $K = \langle g_{\varpi_1}, g_{\varpi_2} \rangle Z I^+$ and $K_i = \langle g_{\varpi_i} \rangle Z I^+$, for $i = 1,2$.  It is not difficult to see that $K$ is a compact-mod-center open subgroup of $\GL_n(F)$.  Noting that $\mathrm{Supp} W_{\varpi_i, \zeta_i} \subset U_n K_i \subset U_nK$, it suffices
by Definitions \ref{def1} and \ref{def2} to prove the following:

(1) $W_{\varpi_i, \zeta_i}(k^{-1})=\ol{W_{\varpi_i, \zeta_i}(k)}$, for any $k \in K$, for $i=1,2$;

(2) $W_{\varpi_1, \zeta_1}(p)=W_{\varpi_2, \zeta_2}(p)$, for any $p \in P_n$.

For (1), note that by definition, for $i=1,2$,
\begin{equation*}
W_{\varpi_i, \zeta_i}(k) = \left\{
\begin{array}{rll}
\chi_{\varpi_i,\zeta_i}(k) & \text{if} & k \in K_i\\
0 & \text{if} & k \in K - K_i
\end{array} \right.
\end{equation*}
Therefore, we just need to show that $\chi_{\varpi_i,\zeta_i}$ is unitary.

For any $k=g_{\varpi_i}^j z g \in K_i$, where $z \in Z$, $g = (g_{k,\ell}) \in I^+$,
$$\chi_{\varpi_i,\zeta_i}(k)=\zeta_i^j \omega(z)
\psi(\sum_{\ell=1}^{n-1} g_{\ell,\ell+1} + g_i),$$
where $g_{n,1}=\varpi_i \cdot g_i$.
Since $\omega$ is unitary, and since $\zeta_i$ is an $n$-th root of $\omega(\varpi_i)$, we get that $\zeta_i \in \BC^1$, where $\BC^1$ is the set of all elements in $\BC$ with complex norm $1$.
Since $\psi$ is level one, it is easy to see that $\chi_{\varpi_i,\zeta_i}|_{I^+}$ factors to a character of the finite group $k_F^n$, hence is automatically unitary.
Therefore, $\chi_{\varpi_i,\zeta_i}$ is a unitary character.

For (2), note that $P_n = \iota(G_{n-1}) N_{n-1,1}$,
where
$$\iota(G_{n-1}) = \{\iota(g) := \begin{pmatrix}
g & 0\\
0 & 1
\end{pmatrix}| g \in G_{n-1}\},$$
and where $N_{n-1,1}$ is the unipotent radical of the parabolic subgroup $Q_{n-1,1}$ with Levi subgroup $G_{n-1} \times G_1$.

It is clear that for $x \in N_{n-1,1}$,
$$W_{\varpi_1, \zeta_1}(x)=W_{\varpi_2, \zeta_2}(x)=\psi(x_{n-1,n}).$$
Therefore, since $\iota(G_{n-1})$ normalizes $N_{n-1,1}$, and since $W_{\varpi_i, \zeta_i}$ transform on the left by $\psi_{U_n}$ on $U_n$, it remains to show that
$$W_{\varpi_1, \zeta_1}(\iota(g))=W_{\varpi_2, \zeta_2}(\iota(g)),$$
for any $g \in G_{n-1}$.

Note that $K_i = \langle g_{\varpi_i} \rangle Z I^+$, and we have a partition $\langle g_{\varpi_i} \rangle Z = \displaystyle\coprod_{j=1}^n g_{\varpi_i}^j Z$. And for $j=1,2,\ldots, n$,
$$g_{\varpi_i}^j = \begin{pmatrix}
0 &  I_{n-j}\\
\varpi_i I_j
\end{pmatrix}.$$
By Lemma \ref{affinebruhat} applied to $G_{n-1}$, we can see that
$$\iota(G_{n-1}) \cap U_n K_i
=\iota(G_{n-1}) \cap U_n \langle g_{\varpi_i} \rangle Z I^+
\subset \iota(U_{n-1} I_{(n-1)}^+),$$ where $I_{(n-1)}^+$ denotes the pro-unipotent part of the standard Iwahori subgroup of $\GL_{n-1}(F)$.

By definition, for $i=1,2$,
for any $u \in U_{n-1}$, $g \in I_{(n-1)}^+$,
$$W_{\varpi_i, \zeta_i}(\iota(ug)) = \psi(\sum_{j=1}^{n-2} u_{j,j+1})\psi(\sum_{s=1}^{n-2} g_{s,s+1}),$$
which is independent of $\varpi_i$ and $\zeta_i$.
Therefore, (2) is also proved.

This completes the proof of the theorem.
\end{proof}

\end{document}